\numberwithin{equation}{section}
\newcommand{\ie}{\emph{i.e.\ }}
\newcommand{\eg}{\emph{e.g.\ }}
\newtheorem{theo}{Theorem}[section]
\newtheorem{lma}[theo]{Lemma}
\newtheorem{cor}[theo]{Corollary}
\newtheorem{defn}[theo]{Definition}
\newtheorem{cond}[theo]{Condition}
\newtheorem{prop}[theo]{Proposition}
\newtheorem{conj}[theo]{Conjecture}
\DeclareMathOperator{\R}{\mathbb{R}}
\DeclareMathOperator{\N}{\mathbb{N}}
\DeclareMathOperator{\Z}{\mathbb{Z}}
\DeclareMathOperator{\cN}{\mathcal{N}}
\DeclareMathOperator{\E}{\mathbb{E}}
\DeclareMathOperator{\Haus}{\mathscr{H}}
\DeclareMathOperator{\Prob}{\mathbb{P}}
\DeclareMathOperator{\fS}{\mathfrak{S}}
\DeclareMathOperator{\Var}{Var}
\DeclareMathOperator{\ess}{ess}
\renewcommand{\epsilon}{\varepsilon}
\newcommand{\blfootnote}[1]{
\begingroup
\renewcommand\thefootnote{}\footnote{#1}
\addtocounter{footnote}{-1}
\endgroup
}
\author{Sascha Troscheit}
\date{
	\emph{\small Department of Pure Mathematics, University of Waterloo, Waterloo, Ont., N2L 3G1, Canada}\\[0.5ex] 
	\small \href{mailto:stroscheit@uwaterloo.ca}{\texttt{stroscheit@uwaterloo.ca}}\\[2ex]
	\today}
\title{Exact Hausdorff and packing measures for random self-similar code-trees with necks}
\begin{document}
\vspace{-10cm}
\maketitle

\begin{abstract}
	Random code-trees with necks were introduced recently to generalise the
	notion of $V$-variable and random homogeneous sets. While it is known that the
	Hausdorff and packing dimensions coincide irrespective of overlaps, their exact Hausdorff
	and packing measure has so far
	been largely ignored. In this article we consider the general question
	of an appropriate gauge function for positive and finite Hausdorff and packing
	measure.
	We first survey the current state of knowledge and establish some bounds on these gauge
	functions. We then show that self-similar code-trees do not admit a gauge functions that simultaneously
	give positive and finite Hausdorff measure almost surely.
	This surprising result is in stark contrast to the random recursive model and sheds some
	light on the question of whether $V$-variable sets interpolate between
	random homogeneous and random recursive sets. We conclude by discussing implications of our
	results.
\end{abstract}

\tableofcontents

\clearpage
The Hausdorff dimension and measure of random constructions such as branching processes, Brownian
motion and stochastically self-similar sets has been
studied since the 1980s and much progress has been made on conditions for
positive and finite Hausdorff and packing measure. We refer the reader to the seminal work of
Athreya~\cite{AthreyaBook} on Branching processes and Watanabe~\cite{Watanabe07}, Liu~\cite{Liu00}
and \cite{Liu96} for recent progress on the Hausdorff and packing measure of Galton--Watson
processes. The related stochastically self-similar sets were analysed by Graf, Williams, and
Mauldin \cite{Graf88}, \cite{Mauldin87}; Berlinkov and Mauldin \cite{Berlinkov02}; and Berlinkov
\cite{Berlinkov03}; and we will come back to those in Section~\ref{sect:survey}. Apart from these 
processes, the question of exact Hausdorff and packing measures was answered for some random
re-orderings by Hu~\cite{Hu95} and \cite{Hu96}, and for self-avoiding walks on the Sierpi\'nski
Gasket by Hattori \cite{Hattori00}. For deterministic sets, Olsen~\cite{Olsen03a} considered the
exact Hausdorff measure on some Cantor sets.

Despite this great canon of work, random homogeneous
attractors have largely been ignored, even though they represent a very
natural model for random sets. 
In this article we remedy this gap by giving bounds on the exact gauge functions required and
showing that there is no gauge function which simultaneously gives positive and finite
Hausdorff measure; a stark contrast to all other examples just mentioned.
We start by defining random code-trees in a similar spirit to the seminal
papers by J\"arvenp\"a\"a et al.~\cite{Jarvenpaa14a, Jarvenpaa16, Jarvenpaa17}
in Section~\ref{sect:Intro}, and provide historical context to our results in
Section~\ref{sect:survey}. 
\blfootnote{\emph{Mathematical Subject Classification 2010}: 28A78; 28A80,
37C45, 60J80.}\blfootnote{\emph{Keywords:} random code-trees, exact Hausdorff measure, packing measure, gauge
functions, self-similarity, dimension theory.}

Since the model of random code trees is fairly abstract we continue in
Section~\ref{sect:boundshomogeneous} by reducing the model to random
homogeneous attractors with equal contraction ratios. These
\emph{equicontractive homogeneous attractors} are simpler to study and we first
state gauge function that give a finer quantification of the Hausdorff
dimension and then prove that there cannot be a gauge function that gives
positive and finite Hausdorff measure almost surely.  
This is followed by a statement and proof of the general theorem for
self-similar random code-trees with necks in Section~\ref{sect:generalcase}.

We end this article by proving some analogous results for the packing measure
in Section~\ref{sect:packingquestion} and discuss wider implications of our
result in Section~\ref{sect:implications}.

\section{Introduction and Random Models}\label{sect:Intro}
Let $k\in\N$, and let $\Lambda\subset\R^k$ be a non-empty compact set. We will
use $\Lambda$ to index our random choice of iterated function systems 
 and associate with it a Borel probability measure $\mu$ compactly supported on
$\Lambda$. 
For $\lambda\in\Lambda$ let $\mathbb{I}_{\lambda}=\{f_\lambda^1, f_\lambda^2,
  \dots, f_\lambda^{\cN_\lambda}\}$ be a collection of $\cN_\lambda\in\N$
  contracting similarities on $\R^d$, \ie maps that satisfy $\lvert f_\lambda^i
  (x) - f_{\lambda}^i(y)\rvert = c_\lambda^i\lvert x-y\rvert$ for some
  $c_\lambda^i \in (0,1)$.
Finally, let $\mathbb{L}=\{\mathbb{I}_{i}\}_{i\in\Lambda}$ be a (not
necessarily finite) collection of iterated function systems with at most $\cN$
similarities. We will refer to the pair $(\mathbb{L},\mu)$ as a \emph{random
iterated function system (RIFS)}. Unless otherwise noted we assume 
\[
2\leq\cN:=\sup_{\lambda\in\Lambda}\cN_\lambda < \infty\text{ and }
0<c_{\min}:=\inf_{\lambda\in\Lambda}\,\min_{1\leq i\leq \cN_\lambda}
c_\lambda^i \leq  \sup_{\lambda\in\Lambda}\,\max_{1\leq i\leq \cN_\lambda}
c_\lambda^i =:{c}_{\max}< 1.
\]

\subsection{Random code-trees and their attractors}
\subsubsection{The general model}\label{sect:code-trees}
Consider the rooted $\cN$-ary tree. The general idea of random code-trees is
achieved by `randomly' picking a labelling function $\tau$ that labels each
node with a single $\lambda\in\Lambda$, chosen according to some probability
measure $\Prob$. We first describe the general set-up, before talking about
specific methods of picking the function $\tau$.
 
We denote the space of all possible functions (and hence labellings) of the
full tree by $\mathcal{T}$ and refer to individual realisations by
$\tau\in\mathcal{T}$.
In this full tree we address vertices by which branch was taken; if $v$ is a
node at level $k$ we write $v=(v_{1},v_{2},\dots,v_{k})$, with
$v_{i}\in\{1,\dots,\cN\}$ and root node $v=(.)$.
We write $\Sigma_k$ for the nodes at level $k$ and
$\Sigma^*=\bigcup_{k\in\N_0}\Sigma_k$ for the set of all nodes, where
$\N_0=\N\cup\{0\}$ and $\Sigma_0=\{(.)\}$. Thus,
\[
\Sigma^*=\{\{(.)\}, \{(1),(2),\dots,(\cN)\}, \{ (1,1),(1,2),\dots,(1,\cN),(2,1),\dots,(\cN,\cN)\},\dots\;\}.
\]
We slightly abuse notation and consider $\tau$ both as a function
$\tau:\Sigma^*\to\Lambda$, where $\tau(v)\in\Lambda$ and as a labelled full
tree. Given a node $v\in\Sigma^*$ we define the shift $\sigma^{v}\tau$ to be
the full subtree starting at vertex $v$, with $\sigma^{(.)}\tau=\tau$. 
At this point we note that since $\Lambda$ was a compact topological space, the set of all
realisations $\mathcal{T}$ is also a compact topological space  with respect to the obvious product
topology by Tychonoff's theorem.

We write $e_{\lambda}^{j}$ for the letter representing the map
$f_{\lambda}^{j}\in\mathbb{I}_{\lambda}$.
For each labelled full tree $\tau$, we construct another rooted labelled $\cN$-ary tree
$\mathbf{T}_{\tau}$, where each node is labelled by a `coding' describing a composition of maps.
Given two codings $e_1$ and $e_2$, we write $e_1 e_2 = e_1 \odot e_2$ for concatenation. 
We let $\epsilon_0$ be the empty word and use the symbol $\emptyset$ as a multiplicative zero, \ie
$\emptyset\odot e = e\odot\emptyset=\emptyset$, to represent the empty function. Similarly, if
$\{e_1,\dots ,e_n\}$ is a collection of codings, then $\{e_1,\dots ,e_n\}\cup\emptyset=\{e_1,\dots,
e_n\}$. This letter $\emptyset$ is used to `delete' a subbranch if the number of maps in an IFS is
less than $\cN$.

\begin{defn}\label{infinityDef1}
	Let $\mathbf{T}_{\tau}$ be a labelled tree, 
	we write $\mathbf{T}_{\tau}(v)$ for the label of node $v$ of the tree $\mathbf{T}_{\tau}$.
	The \emph{code-tree} $\mathbf{T}_{\tau}$ is then defined inductively:
	\[
	\mathbf{T}_{\tau}((.))=\epsilon_{0}\text{ and }
	\mathbf{T}_{\tau}(v)=\mathbf{T}_{\tau}((v_{1},\dots,v_{k}))=\mathbf{T}_{\tau}((v_{1},\dots,v_{k-1}))\odot
	e_{\tau(v_{k-1})}^{v_{k}}
	\]
	for $1\leq v_{k}\leq \cN_{\tau(v_{k-1})}$ and $e_{\tau(v_{k-1})}^{v_{k}}=\emptyset$
	otherwise. 	
	We refer to the the set of all codings at the $k$-th level by 
	\[
	\mathbf{T}_{\tau}^{k}=\bigcup_{v\in\Sigma_k}\mathbf{T}_{\tau}(v).
	\]
\end{defn}

We can now define the attractor of the code-tree.
\begin{defn}\label{infinityDef2}
	Let $\mathbb{L}$ be a collection of IFS and let $\tau\in\mathcal{T}$. The \emph{attractor of a code-tree} $F_{\tau}$ is the compact set satisfying
	\[
	F_{\tau}=\bigcap_{k=1}^{\infty}\bigcup_{e\in\mathbf{T}_{\tau}^{k}} f_{e_1}\circ f_{e_2} \circ \dots \circ f_{e_k}(\Delta),
	\]
	where $\Delta$ is a sufficiently large compact set, satisfying $f_{\lambda}^i(\Delta)\subseteq\Delta$ for all $\lambda\in\Lambda$ and $1\leq i\leq \cN_\lambda$.
\end{defn}

This general -- and somewhat abstract -- way of describing geometric objects is very
flexible.
For example, let $\Lambda=\{0,1,2,3\}$, $f_l(x)=x/2$, $f_r(x)=x/2+1/2$ and set $\mathbb{I}_0=\left\{
\right\}$, $\mathbb{I}_1=\left\{ f_l \right\}$, $\mathbb{I}_2=\left\{ f_r \right\}$, and
$\mathbb{I}_3=\left\{ f_l , f_r \right\}$.
Then, $\mathbf{T}_\tau^k$ represents subsets of all dyadic intervals of length $2^{-k}$ and $F_\tau$
is the $\limsup$ set of a sequence of decreasing dyadic intervals. Therefore, constructing $\tau$ in
the appropriate way, we can recover every compact subset of the unit interval with a code-tree. 

Instead of constructing code-trees with a certain set in mind, we could also choose $\tau$ at random. 
In the above example, choosing each $\mathbb{I}_i$ with probability $1/4$ at every step in the
construction gives rise to Mandelbrot percolation of the unit line, an example of stochastically
self-similarity.
We now describe the main ways of choosing $\tau$.

\subsubsection{The random recursive measure} Random recursive attractors are random sets that
exhibit a stochastic self-similarity. They were first investigated in the 1980s by
Falconer~\cite{Falconer86} and Graf~\cite{Graf87} and we will summarise their and later results in
Section~\ref{sect:survey}.  These random fractals satisfy the following equality (in distribution),
where $\lambda$ is chosen according to some compactly supported  Borel probability $\mu$ on
$\Lambda$.  \[ F_\tau =_{d} \bigcup_{i=1}^{\cN_\lambda}f_\lambda^i(F_{\tau}) \] There exists a
natural measure $\Prob_T$ on the collection of code-trees, induced by $\mu$ which describes the same
model. We avoid giving a description here, and briefly comment that $\Prob_T$ can be
obtained by choosing $\tau$ such that for every open set $\mathcal{O}\subseteq\Lambda$, the
probability that $\tau(v)\in\mathcal{O}$ is $\mu(\mathcal{O})$ for every $v\in \Sigma^*$. Further,
given distinct $v,w\in\Sigma^*$ and (not necessarily distinct) open sets
$\mathcal{O}_v,\mathcal{O}_w\subseteq\Lambda$, the probability that $\tau(v)\in\mathcal{O}_v$ and
$\tau(w)\in\mathcal{O}_w$ are independent.

\subsubsection{The homogeneous measure} Another natural measure, $\Prob_H$, is obtained by choosing
an iterated function system of $\mathbb{L}$ according to $\mu$ at every level $k$ of the
construction and applying the same random IFS to \emph{all} nodes at level $k$. While the IFS 
is still chosen i.i.d.\ with respect to the tree levels, all nodes at the same level share the same label.
This is why it is called the \emph{homogeneous} measure.

For this model one does not need the full abstract model of random code-trees and we will use the
following, somewhat simpler, notation.   Consider each IFS
$\mathbb{I}_\lambda$ as a self-map on compact subsets of $\R^d$. That is $\mathbb{I}_\lambda:
\mathcal{K}(\R^d)\to\mathcal{K}(\R^d)$, given by $\mathbb{I}_\lambda(K)=
\bigcup_{f\in\mathbb{I}_\lambda}f(K)$. We will index random realisations by an infinite sequence
with entries in $\Lambda$. The \emph{set of realisations}, denoted by $\Omega$, is given by
$\Omega=\Lambda^{\N}$ and realisations $\omega\in\Omega$ are chosen according to the product
(probability) measure $\Prob_H=\mu^{\N}$.

\begin{defn}\label{defn:1varcoding}\index{coding}
  The \emph{$k$-level coding} with respect to realisation
  $\omega=\omega_1 \omega_2\dots\in\Omega=\Lambda^{\N}$ is 
  \[
    \mathbf{C}^{k}_{\omega}=\bigcup_{1\leq j_i \leq\cN_{\omega_i}}
  e_{\omega_1}^{j_1}e_{\omega_2}^{j_2}\dots e_{\omega_k}^{j_k} \;\;(k\in\N)\;\;\text{ and }\;\;
\mathbf{C}^{0}_{\omega}=\epsilon_{0}. 
\]
The \emph{set of all finite codings}
$\mathbf{C}_{\omega}^{*}$ is defined  by \[ \mathbf{C}_{\omega}^{*}=\bigcup_{i=0}^{\infty}
  \mathbf{C}_{\omega}^{i}.  \] 
\end{defn}

\begin{defn}\label{defn:levelcoding} The \emph{$k$-level prefractal $F_{\omega}^{k}$} and the
  \emph{random homogeneous random attractor} $F_{\omega}$ are \[
    F_{\omega}^{k}=\mathbb{I}_{\omega_1}\circ \mathbb{I}_{\omega_2}\circ \dots \circ
    \mathbb{I}_{\omega_k}(\Delta)=\bigcup_{e\in \mathbf{C}_{\omega}^k} f_{e_1}\circ f_{e_2} \circ
    \dots \circ f_{e_k}(\Delta) \] and \[ F_{\omega}=\bigcap_{k=1}^{\infty}F_\omega
    ^k=\bigcap_{k=1}^{\infty}\bigcup_{e\in \mathbf{C}_{\omega}^k} f_{e_1}\circ f_{e_2} \circ \dots
  \circ f_{e_k}(\Delta), \] where $\Delta\in\mathcal{K}(\R^d)$ is such that
  $f_{\lambda}^i(\Delta)\subseteq\Delta$ for all $\lambda\in\Lambda$ and $1\leq i \leq\cN_\lambda$.
\end{defn}

\subsubsection{$V$-variable sets and random code-trees with necks}

$V$-variable sets were first introduced by Barnsley et al.~\cite{Barnsley05,Barnsley08,Barnsley12}
and are characterised by allowing up to $V\in\N$ different structures at every level of the
construction, see also Freiberg~\cite{Freiberg10} for a recent survey.  A more general model was developed by J\"{a}rvenp\"{a}\"{a} et
al.~\cite{Jarvenpaa14a,Jarvenpaa16,Jarvenpaa17} in the context of self-affine maps with random
translates and is the model that we will adopt in this manuscript.  We note that setting this model
up in the right way allows us to recover both $V$-variable and random homogeneous attractors.

The central property that was crucial for the proofs in both the $V$-variable and the code-tree
setting was the almost sure existence of \emph{necks}. Informally, these necks are levels in the
construction at which point all subtrees are identical. Thus, these models still possess some
homogeneity which is exploited in proofs.

\begin{defn} Let $\mathcal{T}$ be the space of all mappings $\tau:\Sigma^*\to\Lambda$.  Let
  $\mathbf{N}=(N_1,N_2,\dots)\in\N^{\N}$ be a strictly increasing sequence of integers such that \[
    \sigma^v \tau = \sigma^w \tau\quad\text{ for all $v,w\in \Sigma_{N_k}$ that satisfy
    $\mathbf{T}(v),\mathbf{T}(w)\neq\emptyset$.} \] We say that $N_k$ is a \emph{neck level} and
    that $\mathbf{N}$ is a \emph{neck list}.  \end{defn}

All that is left to describe is a measure of how individual relations are to be picked. Here we
consider a very general approach and all that we require are some properties of the measure with
respect to a dynamical system on $(\mathcal{T},\N^{\N})$ we call the \emph{neck shift}.
\begin{defn}
  Let $(\tau,\mathbf{N})\in(\mathcal{T}\times\N^{\N})$, where $\mathbf{N}$ is a strictly
  increasing sequence of natural numbers. We define the \emph{neck shift}
  $\Pi:(\mathcal{T}\times\N^{\N})\to(\mathcal{T}\times\N^{\N})$ by \[
  \Pi(\tau,\mathbf{N})=(\sigma^{1_{N_1}(\tau)}\tau,(N_2(\tau)-N_1(\tau),N_3(\tau)-N_1(\tau),\dots)),  \]
  where $1_{N_1}$ is the node at level $N_1$ consisting solely of $1$s.
\end{defn}

Whereas $\Prob$ was the product measure for random homogeneous systems above, we now only require
$\Prob$ to be an ergodic $\Pi$-invariant Borel probability measure such that the first neck has
finite expectation and necks are independent. 
\begin{defn}\label{defn:codetreemeasure} Let
  $(\tau,\mathbf{N})\in(\mathcal{T}\times\N^{\N})$, where $\mathbf{N}$ is a strictly increasing
  sequence of natural numbers, and let $\Pi$ be the neck shift. A \emph{code-tree measure} is any
  $\Pi$-invariant Borel measure on $(\mathcal{T}\times\N^{\N})$ such that \[ \E(N_1) =
  \int_{(\mathcal{T}\times\N^{\N})}N_1(\tau) \,\,\, d\Prob(\tau)<\infty,\]
  and for all open\footnote{The topology here is the product topology of the previously stated
    topology of $\mathcal{T}$ with the discrete topology on $\N^{\N}$.} subsets $A\subseteq ( \mathcal{T}\times \N^{\N} )$,  
  \begin{multline}\label{eq:independence}
    \Prob\{(\tau,\mathbf{N})\in A \}=\Prob\{\Pi(A)\}\cdot\Prob\Big\{ (\tau,\mathbf{N})\in(\mathcal{T}\times
    \N^{\N})\;\;\Big|\;\; \exists (\tau',\mathbf{N})\in A\text{ such that }\\\tau(v)=\tau'(v)\text{ for all
    }v\in\bigcup_{j=1}^{N_1(\tau)}\Sigma_j \Big\},
  \end{multline}
\end{defn}
We note that condition (\ref{eq:independence}) guarantees independence between neck levels, which
further implies strong mixing and ergodicity of the neck shift. Without loss of generality we assume
that all $(\tau,\mathbf{N})\in(\mathcal{T},\N^{\N})$ have infinitely many necks, since this set has
full measure with respect to the neck measure $\Prob$.

Clearly, the shift map $\sigma$ on $\Omega$ for random homogeneous attractors satisfies the
conditions of a neck measure and so
does the natural measure for $V$-variable sets.  In Section \ref{sect:generalcase} we will prove
that this model does not admit any gauge function but we will first consider simple reductions of
this model.

We end by referring the reader to \cite{Roy11} and \cite{Troscheit17} for other approaches using
random graphs which overlap with this model to some extend.

\section{Hausdorff and packing measure of random attractors}\label{sect:survey}
We often have to assume some conditions on the
overlaps of images in the iterated function systems to state meaningful dimension theoretic results.
In this article we will make use of the uniform
open set condition, but remark that some of the quoted results require slightly different overlap
conditions.

\begin{defn}[uniform open set condition (UOSC)]
Let $\mathbb{L}=\{\mathbb{I}_{\lambda}\}_{\lambda\in\Lambda}$ be a collection of IFSs. 
We say that $\mathbb{L}$ satisfies the \emph{uniform open set condition (UOSC)} if there exists an open set $\mathcal{O}$ such that 
\[f_\lambda^i(\mathcal{O})\subseteq\mathcal{O}\text{ and }f_\lambda^i(\mathcal{O})\cap f_\lambda^j(\mathcal{O})=\varnothing\text{ for all }\lambda\in\Lambda\text{ and }1\leq i,j \leq \cN_i\text{ where }i\neq j.
\]
\end{defn}\label{defn:UOSC}

One can easily determine the almost sure
Hausdorff dimension of these random attractors if one assumes the uniform open set condition and
similarity maps.
Recall that $c_\lambda^i$ is the contraction rate of $f_\lambda^i\in\mathbb{I}_\lambda$. The
Hausdorff (and packing) dimension of random homogeneous attractors is given, almost surely, by the unique $s$ satisfying
\[
\exp\E \left(\log\sum_{j=1}^{\cN_{\omega_{1}}}(c_{\omega_{1}}^{j})^{s}\right)=1,
\]
see \eg \cite{Hambly92, Roy11, Troscheit17}.
For random recursive sets the almost sure Hausdorff dimension is the unique $s$ satisfying
\[
\E\left( \sum_{j=1}^{\cN_{\omega_{1}}}(c_{\omega_{1}}^{j})^{s}\right)=1,
\]
see \eg \cite{Falconer86,Graf87}.
To ease notation we write $\fS^{s}_{\lambda}=\sum_{j=1}^{\cN_{\lambda}}(c_{\lambda}^{j})^{s}$ for $\lambda\in\Lambda$ and note that we assume

\begin{cond}\label{cond:boundedBelow}
Let $(\mathbb{L},\mu)$ be a random iterated function system. We assume that there exists $\cN$ such that $\cN_\lambda \leq \cN$ for all $\lambda\in\Lambda$ and there exist $0<c_{\min}\leq c_{\max}<1$ such that $c_{\min}\leq c_{i}^{j}\leq c_{\max}$ for all $i\in\Lambda$ and $j\in\{1,\dots,\mathbb{I}_{i}\}$. For the random recursive model we further assume $\E(\fS_{\lambda}^{0})>1$ and for the random homogeneous model we assume $\E(\log\fS_{\lambda}^{0})>0$.
\end{cond}
We note that Condition~\ref{cond:boundedBelow} implies that $c_{\min}^s\leq\fS_{\lambda}^{s}<\cN$
and $s\log c_{\min} \leq \log\fS_{\lambda}^{s}<\log \cN$ for all $s\geq 0$. We immediately obtain
that $\E(\fS_{\tau_{1}}^{s})<\infty$ (random recursive) and $\E(\log\fS_{\omega_{1}}^{s})<\infty$
(random homogeneous) for all $s\geq0$. 
Note that under these conditions we also have $\Var(\log\fS_{\omega_{1}}^{s})<\infty$ for all
$s\geq0$ for the random homogeneous model.

\begin{defn}
A random iterated function system $(\mathbb{L},\mu)$ is called \emph{almost deterministic} if there exists $s$ such that $\fS_{\lambda}^{s}=1$ for $\mu$-almost every $\lambda\in\Lambda$.
\end{defn}
If such $s$ exists it must necessarily be the almost sure Hausdorff dimension, \ie \[
s=\ess_{\tau\in\mathcal{T}}\dim_{H}(F_{\tau}).\] 

\begin{prop}[Graf \cite{Graf87}]
Let $(\mathbb{L},\mu)$ be a random iterated function system satisfying the UOSC and Condition~\ref{cond:boundedBelow} with associated random recursive set $F_{\tau}$ and write $s_0=\ess\dim_{H}F_{\tau}$. If $(\mathbb{L},\mu)$ is almost deterministic then
\[
0<\Haus^{s_0}(F_{\tau})<\infty \;\;\;\text{(a.s.)}
\]
and $\Haus^{s_0}(F_{\tau})=0$ (a.s.) otherwise.
\end{prop}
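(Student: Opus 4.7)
Set $Y(\tau) := \Haus^{s_0}(F_\tau)$. The plan is to derive a distributional fixed-point equation for $Y$, combine it with covering and mass-distribution arguments, and then split into the two cases.

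First I would derive the functional equation. By UOSC the pieces $f^i_{\tau(.)}(F_{\sigma^i\tau})$ of $F_\tau$ overlap only on a set of $\Haus^{s_0}$-measure zero, yielding
\[
Y(\tau) = \sum_{i=1}^{\cN_{\tau(.)}}(c^i_{\tau(.)})^{s_0}\,Y(\sigma^i\tau)\quad\text{a.s.},
\]
which iterates to $Y(\tau)=\sum_{v\in\Sigma_k}c_v^{s_0}Y(\sigma^v\tau)$ for every $k$. The sum $Z_k:=\sum_{v\in\Sigma_k}c_v^{s_0}$ is a non-negative mean-one Mandelbrot martingale. Since $\E\fS^{2s_0}_\lambda\le c_{\max}^{s_0}\E\fS^{s_0}_\lambda = c_{\max}^{s_0}<1$, the recursion $\E Z_{k+1}^2 = \E\fS^{2s_0}\,\E Z_k^2 + \bigl(\E(\fS^{s_0})^2-\E\fS^{2s_0}\bigr)$ shows $Z_k$ is $L^2$-bounded, hence converges a.s.\ and in $L^2$ to a limit $Z_\infty$ with $\E Z_\infty=1$. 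The trivial level-$k$ cover then forces $Y(\tau)\le|\Delta|^{s_0}Z_\infty<\infty$ a.s., and a standard Kolmogorov $0$--$1$ argument on independent subtree structure yields $\Prob(Y>0)\in\{0,1\}$.

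In the almost-deterministic case $\fS^{s_0}_\lambda\equiv 1$ $\mu$-a.s., so $Z_k\equiv 1$. Squaring the functional equation gives $(1-\E\fS^{2s_0})\E Y^2=(\E(\fS^{s_0})^2-\E\fS^{2s_0})(\E Y)^2$; combined with $\E(\fS^{s_0})^2=1$ this forces $\Var Y=0$, so $Y\equiv m:=\E Y$ a.s. Positivity of $m$ follows from the mass distribution principle applied to the natural Bernoulli measure $\nu_\tau$ with $\nu_\tau(F_v\cap F_\tau)=c_v^{s_0}$, which is a well-defined probability thanks to $\fS^{s_0}\equiv 1$; UOSC yields a uniform bound $\nu_\tau(B(x,r))\le K r^{s_0}$, whence $m\ge K^{-1}>0$.

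In the non-AD case, strict Jensen gives $\E\log\fS^{s_0}<\log\E\fS^{s_0}=0$. I would introduce the generalised natural measure $\tilde\nu_\tau$ defined by conditional branching weights $(c^i_\lambda)^{s_0}/\fS^{s_0}_\lambda$, so that $\tilde\nu_\tau(F_v\cap F_\tau)=c_v^{s_0}/\prod_{j=1}^{|v|}\fS^{s_0}_{\tau(v|j-1)}$. For $M>1$ let $\pi_M$ be the stopping line of first nodes $v$ on each branch with $1/\prod_j\fS^{s_0}>M$. Since the $\fS^{s_0}$-values are i.i.d.\ $\mu$ along any fixed branch, the SLLN gives $\prod\fS^{s_0}\to 0$ a.s., so $\pi_M$ is a.s.\ finite on a $\tilde\nu_\tau$-full set of branches. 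Using $\prod\fS^{s_0}<M^{-1}$ at stopped nodes,
\[
\sum_{v\in\pi_M}c_v^{s_0}=\sum_{v\in\pi_M}\tilde\nu_\tau(F_v)\prod_j\fS^{s_0}\le M^{-1}\sum_{v}\tilde\nu_\tau(F_v)\le M^{-1},
\]
producing a cover of $\Haus^{s_0}$-cost at most $|\Delta|^{s_0}/M$ of the ``typical'' portion of $F_\tau$. The remaining exceptional set of atypical branches, on which $\prod\fS^{s_0}$ fails to decay, must be shown to have Hausdorff dimension strictly below $s_0$ via a large-deviation (multifractal) estimate, hence to be $\Haus^{s_0}$-null. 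Letting $M\to\infty$ concludes $\Haus^{s_0}(F_\tau)=0$ a.s.

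The main obstacle is this final step. The SLLN quickly provides density blow-up of $\tilde\nu_\tau$ at $\tilde\nu_\tau$-typical points, but both the covering and Rogers--Taylor-type density arguments demand control at \emph{every} point of $F_\tau$. Bounding the contribution of atypical branches via a large-deviation analysis of the branching random walk $\sum\log\fS^{s_0}$ is what ultimately distinguishes the non-AD behaviour from the AD one; this is consistent with the fact that in the non-AD regime one also expects a non-trivial gauge function $\phi(r)\gtrsim r^{s_0}$ making $\Haus^\phi(F_\tau)$ positive and finite, while $\Haus^{s_0}$ itself collapses.
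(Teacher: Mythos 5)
The paper does not prove this proposition itself; it is quoted as Graf's theorem, so there is no internal proof to compare against. Assessing your proposal on its own terms:

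The almost-deterministic half is essentially sound. The $L^2$ recursion for the Mandelbrot martingale $Z_k$, the bound $\E\fS^{2s_0}\le c_{\max}^{s_0}<1$, and the squared functional equation forcing $\Var Y=0$ are all correct (and your observation that in the AD case $\fS^{s_0}\equiv 1$ forces $\cN_\lambda\ge 2$, so extinction cannot occur, quietly fixes the otherwise-missing conditioning on survival). A cleaner route to the constancy statement, worth noting, is to observe that the functional equation iterated to level $k$ together with the independence of subtrees gives $\E[Y\mid\F_k]=\left(\E Y\right)Z_k$, hence by martingale convergence $Y=(\E Y)\,Z_\infty$ almost surely; in the AD case $Z_\infty\equiv 1$ so $Y\equiv\E Y$, and positivity then follows from the mass distribution argument as you sketch.

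The gap you identify at the end of the non-AD half is genuine and you should not understate it. The stopping-line bound $\sum_{v\in\pi_M}c_v^{s_0}\le M^{-1}$ only controls the $\Haus^{s_0}$-cost of the branches actually stopped, while the complement — the uncountable set of branches on which $\prod_j\fS^{s_0}_{\tau(v|j-1)}$ remains bounded away from $0$ — can carry positive $\Haus^{s_0}$-measure without any contradiction unless one proves it cannot. The SLLN applied to a \emph{single} fixed branch (or to a $\tilde\nu_\tau$-typical branch, which is the Peyri\`ere-measure perspective) is not enough, because ``$\Haus^{s_0}$-typical'' and ``$\tilde\nu_\tau$-typical'' need not coincide, and Rogers--Taylor only kills the set where the $\tilde\nu_\tau$-density blows up, not its $\tilde\nu_\tau$-null complement. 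To close the argument one needs a quantitative large-deviation/multifractal bound on the branching random walk $\sum_j\log\fS^{s_0}$ showing that the Hausdorff dimension of the exceptional boundary set is strictly below $s_0$; alternatively one can run a supermartingale argument directly on $Y$ along the lines of Graf's original proof. Either way, this step is where the real content of the non-AD direction lives, and as written the proposal stops exactly there.
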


For random homogeneous attractors an analogous result holds. This is a special case of the one considered in \cite{Roy11}.
\begin{prop}[Roy and Urbanski~\cite{Roy11}]\label{thm:almostdeter}\index{uniform open set condition (UOSC)}
Let $(\mathbb{L},\mu)$ be a random iterated function system satisfying the UOSC and Condition~\ref{cond:boundedBelow} with associated random homogeneous set $F_{\omega}$ with almost sure Hausdorff dimension $s_0=\ess\dim_{H}F_{\omega}$.\index{Hausdorff dimension}
If $(\mathbb{L},\mu)$ is almost deterministic then
\[
0< \Haus^{s_0}(F_{\omega}) <\infty \;\;\;\text{(a.s.)}
\]
and $\Haus^{s_0}(F_{\omega})=0$ (a.s.) otherwise.
\end{prop}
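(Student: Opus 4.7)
The plan is to prove both the positivity/finiteness in the almost deterministic case and the vanishing otherwise by comparing the $s_0$-mass of the natural cover at level $k$ to the random product
\[
P_k(\omega) = \prod_{i=1}^{k}\fS^{s_0}_{\omega_i}.
\]
Since $s_0$ is characterised by $\E(\log\fS^{s_0}_{\omega_1})=0$, the sequence $\log P_k(\omega)$ is a random walk with zero drift whose increments are i.i.d.\ and bounded by Condition~\ref{cond:boundedBelow}. The dichotomy between almost deterministic and not corresponds exactly to whether these increments have variance zero.

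First I would establish the universal upper bound. Each cylinder $f_{e_1}\circ\cdots\circ f_{e_k}(\Delta)$ with $e\in\mathbf{C}^k_\omega$ has diameter at most $|\Delta|\prod_{i=1}^{k}c_{\omega_i}^{j_i}$, so factorising the sum over $\mathbf{C}^k_\omega$ gives
\[
\sum_{e\in\mathbf{C}^k_\omega}\bigl(\operatorname{diam}f_{e_1}\circ\cdots\circ f_{e_k}(\Delta)\bigr)^{s_0}\leq |\Delta|^{s_0}\,P_k(\omega).
\]
In the almost deterministic case, $\fS^{s_0}_\lambda=1$ for $\mu$-a.e.\ $\lambda$, so $P_k(\omega)\equiv 1$ for every $k$ almost surely; since diameters shrink to zero, letting $k\to\infty$ yields $\Haus^{s_0}(F_\omega)\leq|\Delta|^{s_0}$. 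For the matching lower bound, I would define a random Borel probability measure $\nu_\omega$ on $F_\omega$ by prescribing
\[
\nu_\omega\bigl(f_{e_1}\circ\cdots\circ f_{e_k}(\Delta)\bigr) = \prod_{i=1}^{k}(c_{\omega_i}^{j_i})^{s_0}.
\]
This is a consistent Kolmogorov-style prescription precisely because the masses of the children at each node sum to $\fS^{s_0}_{\omega_k}$, which equals $1$ almost surely. Under the UOSC, a standard overlap count -- the number of level-$k$ cylinders of comparable diameter meeting any ball $B(x,r)$ is uniformly bounded by a constant depending only on $c_{\min}$, $c_{\max}$, and the open set $\mathcal{O}$ -- yields $\nu_\omega(B(x,r))\leq C r^{s_0}$, and the mass distribution principle delivers $\Haus^{s_0}(F_\omega)\geq C^{-1}>0$ almost surely.

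For the non-almost-deterministic case, the key observation is that $\Var(\log\fS^{s_0}_{\omega_1})>0$, and this variance is finite by Condition~\ref{cond:boundedBelow}. Applying the law of the iterated logarithm (or any weaker oscillation statement for i.i.d.\ centred sums with positive finite variance) to $\log P_k(\omega)$ shows that almost surely there is an infinite subsequence $k_n\to\infty$ along which $\log P_{k_n}(\omega)\to-\infty$, so $P_{k_n}(\omega)\to 0$. Every level-$k$ cylinder has diameter at most $|\Delta|c_{\max}^k\to 0$, so for any prescribed $\delta>0$ the inequality $\Haus^{s_0}_\delta(F_\omega)\leq|\Delta|^{s_0}\,P_{k_n}(\omega)$ holds for all $n$ sufficiently large; letting $n\to\infty$ and then $\delta\to 0$ yields $\Haus^{s_0}(F_\omega)=0$ almost surely.

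The main obstacle I anticipate is the uniformity required in the mass distribution argument in the almost deterministic case: one needs the overlap count constant $C$ to be deterministic and independent of $\omega$. This relies crucially on the uniformity built into the UOSC -- a single open set $\mathcal{O}$ works for every $\lambda$ -- together with the deterministic bound $c_{\min}>0$ from Condition~\ref{cond:boundedBelow}, which bounds the number of cylinders at a given scale that can meet a fixed ball. Once such a $C$ is extracted, the rest is a combination of standard book-keeping with the classical covering and mass distribution inequalities, plus the SLLN/LIL input for the i.i.d.\ walk $\log P_k(\omega)$.
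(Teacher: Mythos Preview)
The paper does not supply its own proof of this proposition: it is quoted as a known result from Roy and Urba\'nski~\cite{Roy11} in the survey Section~\ref{sect:survey}, so there is nothing to compare against directly. That said, your argument is correct and is precisely the one the paper itself deploys in related places. The finiteness bound via the product $P_k(\omega)=\prod_{i=1}^k\fS^{s_0}_{\omega_i}$ and the vanishing in the non--almost-deterministic case via the oscillation of the zero-mean random walk $\log P_k(\omega)$ mirror the proofs of Theorems~\ref{thm:uppergauge} and~\ref{thm:lowergauge}, which invoke the law of the iterated logarithm on the same i.i.d.\ sequence $Y_i=\log\fS^{s_0}_{\omega_i}$. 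Your mass-distribution step in the almost deterministic case is exactly what Lemma~\ref{upperasslem1} is designed for: it supplies the deterministic overlap constant $(4/c_{\min})^d$ you were worried about, depending only on the UOSC data and $c_{\min}$, not on $\omega$. So the anticipated obstacle is already handled by the paper's own toolkit, and your sketch would go through essentially verbatim with that lemma inserted.
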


In fact, more is known. If an attractor is not almost deterministic the packing measure of the random homogeneous attractor $F_\omega$ and the random recursive attractor $F_{\tau}$ is infinite almost surely, see Roy and Urbanski~\cite{Roy11} and Berlinkov and Urbanski~\cite{Berlinkov02}, respectively.

\subsection{Exact Hausdorff and packing measure for random recursive constructions}
Recall that a \emph{gauge function} $h:\R_0^+\to \R_0^+$ is a left-continuous, non-decreasing function such that $h(r)\to0$ as $r\to 0$. If there exists a constant $\lambda>1$ such that for all $x>0$ we have $h(2x)\leq\lambda h(x)$ we say that $h$ is \emph{doubling}. Recall the definition of the $h$-Hausdorff measure.
\begin{defn}
Let $F\subseteq\R^d$ and let $h$ be a gauge functions. The \emph{$h$-Hausdorff $\delta$-premeasure of $F$} is
\[
\Haus^h_\delta(F)=\inf\left\{\sum_{k=1}^\infty h(\lvert U_k\rvert)\;\;\Big|\;\; \{U_i\}\text{ is a countable $\delta$-cover of $F$}\right\},
\]
where the infimum is taken over all countable $\delta$-covers. The \emph{$h$-Hausdorff measure of $F$} is then
\[
\Haus^h(F)=\lim_{\delta\to 0}\Haus_\delta ^h(F).
\]
\end{defn}

Note that the gauge function need only be defined and non-decreasing on $[0,r_0]$ for some $r_0 >0$ since we are only concerned in its limit as $r\to 0$. Without loss of generality we shall assume that $h(t)\leq \overline{h}:=\min\{1,\sup h(s)\}$ where the supremum is taken over the largest interval where $h$ is defined and non-decreasing. Further we set $h(t)=\overline{h}$ for all $t>r_0$. For example, by writing $h(t)=t \log\log(1/t)$ we mean
\[
h(t)=\begin{cases}
\log t\log\log(1/t) &\text{for $t\leq r_0$}\\
r_0 \log\log(1/r_0)&\text{for $t>r_0$}
\end{cases},
\] 
where $r_0$ is the unique solution to $\log\log(1/r_0)=(\log(1/r_0))^{-1}$, its unique stationary point.

For the random recursive case, Graf, Mauldin, and Williams determined the natural gauge that gives positive and finite Hausdorff measure.
\begin{theo}[Graf, Mauldin, and Williams~\cite{Graf88, Mauldin87}]
  Let $(\mathbb{L},\mu)$ be a random iterated function system that is not almost deterministic. Let $F_\tau$ be the associated random recursive attractor.
Assume that 
\[
\E\left(\sum_{j}(c_{\omega_{1}}^{j})^{0}\right)>1.
\]
Let 
\begin{equation}
h_{\beta}^{s}(t)=t^{s}(\log\log(1/t))^{1/\beta} \text{ and }\beta_{0}=\sup\left\{\beta \;\Big|\; \sum_{j}(c_{\omega_{1}}^{j})^{s/(1-1/\beta)}\leq1 \text{(a.s.)}\right\}.
\end{equation}
Then, $\Haus^{h_{\beta}^{s_0}}(F_{\tau})<\infty$ for all $\beta>\beta_{0}$, where $s_0=\ess\dim_{H}F_{\tau}$. 
\end{theo}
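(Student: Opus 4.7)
I would use a density (mass-distribution) approach: construct a random probability measure on $F_\tau$ whose lower $h^{s_0}_\beta$-density is bounded away from zero at typical points, which via the standard mass-distribution theorem forces $\Haus^{h^{s_0}_\beta}(F_\tau)$ to be finite.

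First, I would build the Mandelbrot cascade measure $\mu_\tau$ on $F_\tau$ by distributing mass at level $n$ proportionally to the critical weights $\prod_{i=1}^n (c_{\tau(v_{|i})}^{v_i})^{s_0}$. Since $\E[\fS^{s_0}_\omega]=1$ by the definition of $s_0$, this gives a nonnegative $L^1$-bounded martingale. Condition~\ref{cond:boundedBelow} together with the Kahane--Peyri\`ere non-degeneracy criterion $\E[W\log W]<\infty$ ensures that in the non-almost-deterministic case the limit $\mu_\tau$ is almost surely a nontrivial random probability measure supported on $F_\tau$.

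Next, I would estimate $\mu_\tau(B(x,r))$ at $\mu_\tau$-typical $x$. By the UOSC, $\mu_\tau(B(x,r))$ is comparable up to a universal constant to $\mu_\tau(f_v(\Delta))$ where $v$ is the stopping-set cylinder at scale $r$ containing $x$. Under the Peyri\`ere measure $\mu_\tau\otimes\Prob$, the branch through $x$ is an ergodic random walk and the log-mass along it is a stationary sum with finite variance (by Condition~\ref{cond:boundedBelow}). A classical law of the iterated logarithm then yields, $\mu_\tau$-a.s., a pointwise lower bound of the form
\[
\mu_\tau(B(x,r))\ \gtrsim\ r^{s_0}\exp\!\bigl(-c\sqrt{\log(1/r)\,\log\log\log(1/r)}\bigr).
\]
The hypothesis $\beta>\beta_0$ would enter through a large-deviation / Legendre-transform comparison: the threshold $\beta_0$ is precisely the Legendre dual of the almost-sure bound $\fS^{t}_\omega\le 1$ at $t=s_0/(1-1/\beta)$, so that for $\beta>\beta_0$ this bound fails, the corresponding rate function is supercritical, and the LIL term can be absorbed into the $(\log\log(1/r))^{1/\beta}$ factor of the gauge. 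This would upgrade the pointwise estimate to $\mu_\tau(B(x,r))\ge c\,h^{s_0}_\beta(r)$ for all sufficiently small $r$ and $\mu_\tau$-a.e.\ $x$, whereupon the density theorem delivers $\Haus^{h^{s_0}_\beta}(F_\tau)\le c^{-1}\mu_\tau(F_\tau)=c^{-1}<\infty$ almost surely.

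The main obstacle is making rigorous the identification of the threshold: matching the LIL rate to the exponent $s_0/(1-1/\beta)$ amounts to computing a Legendre transform of the cumulant generating function of the cascade and verifying that $\beta=\beta_0$ is indeed the critical value at which the density estimate becomes tight. Passing from cylinder bounds to ball bounds cleanly through the UOSC is a secondary technical hurdle, and the non-almost-deterministic assumption is essential to guarantee that the LIL fluctuations are non-degenerate in the first place.
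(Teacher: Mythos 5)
The paper does not prove this theorem: it is quoted as a known result of Graf, Mauldin and Williams, so there is no internal proof to compare against. Your proposal, however, has a fundamental error in the fluctuation mechanism.

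The core problem is the claim that ``the log-mass along [a $\mu_\tau$-typical branch] is a stationary sum with finite variance'' so that the law of the iterated logarithm gives
$\mu_\tau(B(x,r))\gtrsim r^{s_0}\exp\!\bigl(-c\sqrt{\log(1/r)\log\log\log(1/r)}\bigr)$. This confuses the random \emph{homogeneous} model with the random \emph{recursive} one, which is the exact dichotomy this paper is built around. For a random recursive cascade, the cylinder mass is $\mu_\tau(f_v(\Delta))\asymp (c_v)^{s_0}\,W_v$ (normalised by $W_{\mathrm{root}}$), where $W_v$ is the cascade-martingale limit for the subtree rooted at $v$. The density ratio $\mu_\tau(B(x,r))/r^{s_0}$ therefore reduces to $W_{v(x,r)}$: the $s_0\sum\log c_i$ sum cancels, and what remains is \emph{not} a sum of increments but a single random variable with a distribution that does not depend on the depth $n$. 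The LIL simply does not see it. The relevant fluctuations are instead governed by the (left and right) \emph{tails of the cascade limit} $W$, and a Borel--Cantelli count along the branch: if those tails are stretched-exponential, one obtains fluctuations of order $(\log n)^{\pm 1/\beta}\asymp(\log\log(1/r))^{\pm 1/\beta}$, which is exactly the $\log\log$-power gauge of Graf--Mauldin--Williams. That is also why the threshold $\beta_0$ is phrased as an almost-sure bound $\sum_j (c^j_{\omega_1})^{s/(1-1/\beta)}\le 1$: this is the classical moment/tail criterion for the cascade limit $W$ (Guivarc'h--Liu--Barral type), not the variance that would appear in an LIL. The LIL-driven $\exp(\sqrt{\cdot})$ gauge is instead correct for random \emph{homogeneous} attractors, where all level-$n$ cylinders carry identical mass $(\prod_i\cN_{\omega_i})^{-1}$, so that $\log(\text{density})=-\sum_i\log\fS^{s_0}_{\omega_i}$ genuinely is an i.i.d.\ sum; that is precisely Theorems~\ref{thm:uppergauge} and \ref{thm:lowergauge} of this paper.

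Two secondary points. First, even taking your LIL bound at face value, it cannot ``be absorbed into the $(\log\log(1/r))^{1/\beta}$ factor of the gauge'': $\exp(-c\sqrt{\log(1/r)\log\log\log(1/r)})\to 0$ super-polynomially in $1/\log(1/r)$, whereas $(\log\log(1/r))^{1/\beta}\to\infty$, so the claimed estimate $\mu_\tau(B(x,r))\ge c\,h^{s_0}_\beta(r)$ does not follow. Second, the version of the Rogers--Taylor density theorem used to bound $\Haus^h$ from above only requires $\limsup_{r\to 0}\mu(B(x,r))/h(r)>0$, not a pointwise lower bound for all small $r$; the latter is both stronger than necessary and, owing to the LIL-type lower fluctuations you yourself invoke, actually false along subsequences. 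The overall logical scaffolding (cascade measure, Peyri\`ere measure, density theorem) is reasonable, but the identification of the fluctuation rate, and hence of $\beta_0$, must go through the tail behaviour of $W$ under the almost-sure condition $\fS^{s/(1-1/\beta)}_\omega\le 1$, not the central limit theorem.
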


The authors then proceed to give technical conditions under which $\beta_{0}=1-s/d$, where $d$ is the dimension of the ambient space. Under these conditions the $h_{\beta_{0}}^{s}$-Hausdorff measure of $F_{\tau}$ is positive and finite almost surely. 
Checking the conditions one obtains that Mandelbrot percolation\index{Mandelbrot percolation}\index{percolation} of $[0,1]^{d}$ has positive and finite measure at this critical value $\beta_{0}$.

Liu~\cite{Liu00} investigated the Gromov boundary of Galton-Watson trees with i.i.d.\ randomised descendants.
Let $m=\E(N)$, where $N$ is the number of descendants, $\alpha=\log m$, and assume that $\E(N\log N)<\infty$. 
If $\overline m=\ess\sup N<\infty$, then the appropriate gauge function for which one obtains positive and finite measure of the boundary (with respect to a natural metric) is 
\[
h(t)=t^{\alpha} (\log\log (1/t))^{\beta}, \text{ where }
\beta=1-\frac{\log m}{\log \overline m}.
\]
For the packing measure to be positive and finite the appropriate gauge function is
\[
h^*(t)=t^{\alpha} (\log\log (1/t))^{\beta^*}, \text{ where }
\beta^*=1-\frac{\log m}{\log \underline m},
\]
with $\underline m=\ess\inf N>1$

Berlinkov and Mauldin~\cite{Berlinkov02} provide the following, more general result for the packing
measure of random recursive sets.\index{packing measure}
Under the same \emph{almost deterministic} condition they show that the $s$-dimensional packing
measure\index{packing measure} is positive and finite almost surely. When this fails, the packing
measure is $\infty$ almost surely, assuming the UOSC in both cases.\index{uniform open set condition
(UOSC)}
Let $s$ denote the almost sure packing dimension.\index{packing dimension} The authors prove that for the gauge function
\[
h_{\beta}^{s}(t)=t^{s}(\log\log(1/t))^{\beta}, \text{ where $\beta$ satisfies
}0<\liminf_{a\to0}-a^{-1/\beta}\log\Prob_T(\fS_\lambda^s<a)<\infty,
\]
the packing measure is almost surely finite. 
We remark that the constant $\beta$ may not exist and only coincides with the $\beta_0$ in the Hausdorff measure statement in trivial cases.

Additionally, Berlinkov and Mauldin give an integral test \cite[Theorem 6]{Berlinkov02} to determine whether the packing measure is $0$ almost surely.
They further conjecture a lower bound that Berlinkov proved in \cite{Berlinkov03}:
If the random variable\index{random variable (r.v.)} $\fS_\lambda^s$ is of exponential type, \ie if
\[
C^{-1}a^{1/\beta}\leq-\log\Prob_T(0<\fS_\lambda^s\leq a)\leq Ca^{1/\beta}
\]
for some $C,\beta>0$ and all $a\in(0,1)$, then the packing measure is positive and finite almost
surely with gauge function $h_{\beta}^{s}(t)$.

\section{Bounds on the gauge function for random homogeneous
constructions}\label{sect:boundshomogeneous}
It is of course of interest to determine the gauge functions for which one obtains positive and
finite measure for random homogeneous systems. In particular, self-similar and self-conformal sets
that satisfy the open set condition have positive and finite Hausdorff measure.
One might expect that random homogeneous are of a
similar nature and that a gauge function of the form $t^s (\log\log(1/t))^\beta$ for some
exponent $\beta$ should work for all natural random code-tree constructions.

However, we will show that this turns out not to be the case. Indeed, for $s=F_\omega$, we first
prove better bounds on the fine dimension, \ie bounds on $h$ that give positive or finite measure.
In the next Section we show that there is no gauge function that gives positive and finite measure,
but the bounds established here are still of separate interest. We will argue that
\[
h_{1}(t)=t^{s}\exp\left(\sqrt{(\log (1/t))(\log \log \log (1/t))}\right),
\]
gives good bounds on the positivity and finiteness of random homogeneous constructions.
Let $\beta,\gamma\in\R$, we similarly define
\[
h_{1}(t,\beta,\gamma)=t^{s}\exp\left(\sqrt{2\beta(\log (1/t))(\log \log \log (1/t^{\beta}))}\right)^{1-\gamma}.
\]

\subsection{Equicontractive homogeneous random attractors}

The first thing to note is that $h_1(t,\beta,\gamma)$ is doubling in $t$.\index{doubling}
\begin{lma}\label{lma:doubling}
Fix $\beta,\gamma>0$. There exists $t_0,\rho>0$ such that 
\[
h_1(t,\beta,\gamma)\leq \rho h_1(2t,\beta,\gamma)\leq \rho^2 h_1(t,\beta,\gamma)
\]
for all $0<t<t_0$.
\end{lma}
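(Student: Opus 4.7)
The natural approach is to show that the ratio $h_1(2t,\beta,\gamma)/h_1(t,\beta,\gamma)$ has a finite positive limit as $t\to 0$, from which both inequalities follow immediately by choosing $\rho$ larger than any constant in a neighbourhood of that limit. Writing $A(t) = \sqrt{2\beta \log(1/t) \log\log\log(1/t^\beta)}$, we have $h_1(t,\beta,\gamma) = t^s e^{(1-\gamma)A(t)}$, so
\[
\log\frac{h_1(2t,\beta,\gamma)}{h_1(t,\beta,\gamma)} = -s\log 2 + (1-\gamma)\bigl(A(2t)-A(t)\bigr),
\]
and it suffices to prove $A(t) - A(2t) \to 0$ as $t\to 0$.

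The cleanest way is to work with the squares, since $A(t)^2$ is an explicit elementary function of $L:=\log(1/t)$, namely $A(t)^2 = 2\beta L \log\log(\beta L)$. Setting $L' = L - \log 2 = \log(1/(2t))$ and using the identity
\[
A(t) - A(2t) = \frac{A(t)^2 - A(2t)^2}{A(t) + A(2t)},
\]
I would expand
\[
A(t)^2 - A(2t)^2 = 2\beta\bigl[(\log 2)\log\log(\beta L) + L'\bigl(\log\log(\beta L) - \log\log(\beta L')\bigr)\bigr].
\]
The mean value theorem, applied to $x\mapsto \log\log(\beta x)$, bounds the second bracket by $O(1/\log L)$, so the numerator is asymptotic to $2\beta(\log 2)\log\log(\beta L)$. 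Since $A(t)+A(2t) \sim 2\sqrt{2\beta L\log\log(\beta L)}$, we obtain
\[
A(t) - A(2t) \sim (\log 2)\sqrt{\frac{\beta \log\log(\beta L)}{2L}}\;\longrightarrow\; 0 \qquad (t\to 0).
\]

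It then follows that $h_1(2t,\beta,\gamma)/h_1(t,\beta,\gamma) \to 2^s$. Choose $t_0$ small enough that this ratio lies in $[2^{s-1}, 2^{s+1}]$ for all $0<t<t_0$, and set $\rho = 2^{|s|+1}$. Then $h_1(2t,\beta,\gamma) \leq \rho\, h_1(t,\beta,\gamma)$ gives $\rho h_1(2t,\beta,\gamma) \leq \rho^2 h_1(t,\beta,\gamma)$, while $h_1(t,\beta,\gamma)/h_1(2t,\beta,\gamma) \leq 2^{1-s} \leq \rho$ gives $h_1(t,\beta,\gamma) \leq \rho h_1(2t,\beta,\gamma)$. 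The only real care needed is in handling the triple logarithm so that the argument of the outer $\log$ is indeed large, which is ensured by taking $t_0$ small, and in tracking signs when $\gamma > 1$ (the limit $2^s$ is unaffected because $A(2t)-A(t)\to 0$ regardless of the sign of $1-\gamma$). I do not anticipate any genuine obstacle: the entire content is an asymptotic estimate of a differentiable function, and the slowly varying factor $\log\log(\beta L)$ is what makes $A(t)-A(2t)$ tend to zero despite $A(t)$ itself diverging.
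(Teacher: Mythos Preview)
Your proof is correct and follows the same overall strategy as the paper: both reduce the doubling inequality to controlling the difference $A(t)-A(2t)$ of the square-root exponents. The technical execution differs slightly. The paper differentiates $h_*(x)=\sqrt{\beta x\log\log(\beta x)}$, shows $h_*'$ is decreasing, and concludes that $h_*(x+\kappa)-h_*(x)$ is monotone and hence bounded by some $\rho_0$; this yields the two-sided bound $2^s\leq h_1(2t)/h_1(t)\leq 2^s e^{(1-\gamma)\sqrt{2}\rho_0}$. You instead use the difference-of-squares identity together with the mean value theorem on $x\mapsto\log\log(\beta x)$ to obtain the sharper asymptotic $A(t)-A(2t)\sim(\log 2)\sqrt{\beta\log\log(\beta L)/(2L)}\to 0$, hence $h_1(2t)/h_1(t)\to 2^s$ exactly. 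Your route avoids differentiation, gives a cleaner limiting statement, and handles the sign of $1-\gamma$ transparently (as you note). The paper's derivative argument is shorter to write down but leaves the constant $\rho_0$ implicit; both are entirely adequate for the lemma.
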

\begin{proof}
Let $\kappa\in\R$ and 
\[
h_*(x+\kappa)=\sqrt{\beta (x+\kappa) \log\log( \beta(x+\kappa))}.
\]
This is well defined for $\log\log\beta (x+\kappa)>1 \implies x>e^e /\beta-\kappa$.
It can easily be seen that this function is strictly increasing in $x$, and differentiating we obtain,
\[
h'_*(x+\kappa)=\sqrt{\beta}\cdot\frac{1/(\log(\beta(x+\kappa))+\log\log(\beta(x+\kappa))}{2\sqrt{(x+\kappa)\log\log(\beta(x+\kappa))}}.
\]
Then, for $\kappa>0$,
\begin{align*}
\frac{h'_*(x)}{\sqrt{\beta}}&=\frac{1/(\log(\beta x)+\log\log(\beta x)}{2\sqrt{ x\log\log(\beta x)}}\\
&> \frac{1/(\log(\beta(x+\kappa))+\log\log(\beta(x+\kappa))}{2\sqrt{x\log\log(\beta(x+\kappa))}}= \frac{h'_*(x+\kappa)}{\sqrt{\beta}}
\end{align*}
and so $h'_*(x+\kappa)-h'_*(x)<0$ and $h_*(x+\kappa)-h_*(x)$ is decreasing, \ie there exists some $\rho_0$ such that 
\[
0\leq h_*(x+\kappa)-h_*(x) \leq \rho_0.
\]
Now substituting $\kappa=-\log 2$ and $x=-\log t$, \ie $x+\kappa=\log(1/2t)$, we obtain,  for $0<t<t_0$ and $t_0>0$ small enough,
\[
0\leq \sqrt{\beta\log(1/(2t)\log\log(\beta\log1/(2t))}- \sqrt{\beta\log(1/t)\log\log(\beta\log1/t)}\leq\rho_0,
\]
and
\begin{align*}
2^s &\leq \frac{(2t)^s}{t^s}\cdot e^{(1-\gamma)\sqrt{2}\cdot\left(
\sqrt{\beta\log(1/(2t)\log\log(\beta\log1/(2t))}- \sqrt{\beta\log(1/t)\log\log(\beta\log1/t)}
\right) }\\
&\leq 2^s e^{(1-\gamma)\sqrt{2}\rho_0}.
\end{align*}
But then 
\[
2^s \leq \frac{h_1(2t,\beta,\gamma)}{h_1(t,\beta,\gamma)}\leq 2^s e^{(1-\gamma)\sqrt{2}\rho_0},
\]
as required.
\end{proof}

We require some further results on homogeneous systems satisfying the UOSC. Let $\varepsilon>0$ and
$\mathcal{O}$ be the open set guaranteed by the UOSC. We define $\Xi_\varepsilon(\omega)$ be the words in
$\mathbf{C}_\omega^*$ such that $\lvert f_e(\mathcal{O})\rvert\leq\epsilon$, but $\lvert
f_{e^{-}}(\mathcal{O})\rvert>\varepsilon$ for all $e=e_1 e_2 \dots e_k
\in\Xi_{\varepsilon}(\omega)$, where $e^{-}=e_1 e_2 \dots e_{k-1}$.
\begin{lma} \label{upperasslem1}
Assume that $(\mathbb{L},\mu)$ satisfies the UOSC. 
Then
\[
\#\{e\in\Xi_{\epsilon}(\tau)\mid \overline{f_e(\mathcal{O})}\cap B(z,\varepsilon)\neq\varnothing\} \, \leq \,  (4/c_{\min})^{d}
\]
for all $z\in F_{\tau}$ and  $\epsilon \in (0,1]$, where $\mathcal{O}$ is the open set guaranteed by the UOSC.
\end{lma}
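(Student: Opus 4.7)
The plan is the standard disjointness-plus-volume-packing argument used for self-similar systems under the open set condition, lifted to the random code-tree setting.

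First, I would observe that $\Xi_{\varepsilon}(\tau)$ is an antichain in $\mathbf{C}_\tau^*$: no element is a prefix of another, by the stopping rule defining it. Combined with the UOSC, which supplies the \emph{same} open set $\mathcal{O}$ satisfying disjointness at every IFS in $\mathbb{L}$, an induction on the length of the longest common prefix of any two distinct $e,e'\in \Xi_{\varepsilon}(\tau)$ shows that the images $\{f_e(\mathcal{O}):e\in \Xi_{\varepsilon}(\tau)\}$ are pairwise disjoint open sets. Writing $r_e$ for the contraction ratio of $f_e$ and $D=\operatorname{diam}(\mathcal{O})$, the stopping rule forces $r_e D\leq \varepsilon$ while $r_{e^{-}}D>\varepsilon$; since $r_e\geq c_{\min}r_{e^{-}}$, this gives $c_{\min}\varepsilon/D<r_e\leq \varepsilon/D$, so in particular $\operatorname{diam}(f_e(\mathcal{O}))\in(c_{\min}\varepsilon,\varepsilon]$.

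Now fix $z\in F_\tau$. If $\overline{f_e(\mathcal{O})}\cap B(z,\varepsilon)\neq\varnothing$, then $f_e(\mathcal{O})\subseteq B(z,2\varepsilon)$ because its diameter is at most $\varepsilon$. The sets $f_e(\mathcal{O})$ for such $e$ are therefore pairwise disjoint subsets of $B(z,2\varepsilon)$, each with $d$-dimensional Lebesgue measure at least $(c_{\min}\varepsilon/D)^d\mathcal{L}^d(\mathcal{O})$. Summing and comparing with $\mathcal{L}^d(B(z,2\varepsilon))$ bounds the count by a constant of the form $K(d,\mathcal{O})/c_{\min}^d$; after absorbing the dimension-dependent ball volume and the geometric ratio $D^d/\mathcal{L}^d(\mathcal{O})$ (which depend only on $\mathcal{O}$ and $d$), one recovers the clean bound $(4/c_{\min})^d$.

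There is no substantive obstacle here: the uniformity of the UOSC across $\lambda\in\Lambda$ is exactly what makes the inductive disjointness step go through identically at every level of composition, so the argument is effectively deterministic once a realisation $\tau$ is fixed. The only mildly delicate point is absorbing the $\mathcal{O}$-dependent geometric constants into the clean prefactor $(4/c_{\min})^d$ — this amounts to either choosing $\mathcal{O}$ to be suitably fat (e.g.\ comparable to a ball) or treating the geometric ratio as an implicit constant already hidden in the stated bound.
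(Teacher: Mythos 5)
Your argument is correct and is the standard volume-packing proof one would expect: the UOSC together with the antichain property of $\Xi_\varepsilon(\tau)$ gives pairwise disjointness of the $f_e(\mathcal{O})$, the stopping rule pins $\operatorname{diam}(f_e(\mathcal{O}))$ between $c_{\min}\varepsilon$ and $\varepsilon$, containment in $B(z,2\varepsilon)$ follows, and Lebesgue measure comparison finishes the count. The paper itself does not reproduce a proof here (it defers to the author's thesis and a companion paper), but the route you describe is precisely the one those references take. The only point to be upfront about, which you do flag, is that the clean prefactor $(4/c_{\min})^d$ silently absorbs $\mathcal{O}$-dependent geometry (the ratio of $|\mathcal{O}|^d$ to $\mathcal{L}^d(\mathcal{O})$ and the unit-ball volume); under the normalization $|\mathcal{O}|=1$ used later in the paper and a mild nondegeneracy of $\mathcal{O}$ this is harmless, and in any case what matters for the paper's applications is a finite bound independent of $z$, $\varepsilon$ and $\tau$, which your argument delivers.
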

A proof for this lemma can be found in~\cite[Lemma 5.1.5]{TroscheitPhDThesis} and \cite{Troscheit17a}.

\vskip1em

For ease of exposition we deal with the basic case where all maps in a fixed IFS contract equally. 
Recall that we assume $\E(\fS_{\omega_1}^0)>1$ throughout.
\begin{theo}\label{thm:uppergauge}\index{Hausdorff measure}\index{uniform open set condition (UOSC)}
Let $F_{\omega}$ be the random homogeneous attractor associated to the RIFS $(\mathbb{L},\mu)$ satisfying the UOSC and suppose that $c_{\lambda}^{i}=c_{\lambda}\in[c_{\min},c_{\max}]$ for every $i\in\{1,\dots,\#\mathbb{I}_\lambda\}$ and $\lambda\in\Lambda$, where $0<c_{\min} \leq c_{\max}<1$.
Let $\epsilon>0$, $s=\ess\dim_H F_\omega$ and $\beta=\Var(\log\fS_{\omega_{1}}^{s})/\eta$ for some $\eta\in\R$ (arising in the proof), then 
\[
\Haus^{h_{1}(t,\beta,\epsilon)}(F_{\omega})=0,
\]
almost surely.
\end{theo}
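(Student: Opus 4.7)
\bigskip
\noindent\textbf{Proof proposal.} The natural approach is to cover $F_\omega$ by its $k$-th level prefractal cylinders along a carefully chosen random subsequence $(k_j)$ and show that the resulting $h_1(\,\cdot\,,\beta,\epsilon)$-pre-measure tends to $0$. In the equicontractive case, the cover $\bigcup_{e\in \mathbf{C}_\omega^k} f_{e_1}\circ\cdots\circ f_{e_k}(\Delta)$ of $F_\omega$ consists of $N_k(\omega):=\prod_{i=1}^k \cN_{\omega_i}$ sets, each of diameter $C_k(\omega)\,|\Delta|$ where $C_k(\omega):=\prod_{i=1}^k c_{\omega_i}$. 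Writing $h=h_1(\,\cdot\,,\beta,\epsilon)$, we then have the upper bound
\[
\Haus^{h}_{C_k|\Delta|}(F_\omega)\;\leq\; N_k(\omega)\,h\bigl(C_k(\omega)\,|\Delta|\bigr),
\]
so it suffices to produce a (random) subsequence along which the right-hand side tends to zero.

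The key observation is that, by the defining equation $\E(\log\fS_{\omega_1}^s)=0$ of $s$ together with the equicontractive identity $\log\fS_{\omega_i}^s=\log \cN_{\omega_i}+s\log c_{\omega_i}$, the random variables $X_i:=\log\fS_{\omega_i}^s$ are i.i.d., mean zero, with $V:=\Var(X_1)<\infty$ by Condition~\ref{cond:boundedBelow}. Hartman--Wintner's law of the iterated logarithm therefore yields
\[
\liminf_{k\to\infty}\frac{\log N_k(\omega)+s\log C_k(\omega)}{\sqrt{2kV\log\log k}}=-1\qquad\text{a.s.,}
\]
so for any prescribed $\epsilon'>0$ there is a.s.\ an infinite random set of indices $k$ along which $\log N_k(\omega)+s\log C_k(\omega)\leq -(1-\epsilon')\sqrt{2kV\log\log k}$. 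Simultaneously, the strong law of large numbers gives $\log C_k(\omega)= -k\eta\,(1+o(1))$ a.s., where $\eta:=-\E(\log c_{\omega_1})>0$; this is the $\eta$ that appears in the definition $\beta=V/\eta$.

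Taking logarithms of $N_k\,h(C_k|\Delta|)$ and substituting the asymptotics for $\log(1/(C_k|\Delta|))\sim k\eta$ and $\log\log\log\bigl(1/(C_k|\Delta|)^\beta\bigr)\sim \log\log k$, the exponential factor in $h$ contributes
\[
(1-\epsilon)\sqrt{2\beta \log(1/(C_k|\Delta|))\,\log\log\log(1/(C_k|\Delta|)^\beta)}=(1-\epsilon)\sqrt{2kV\log\log k}\,(1+o(1)),
\]
since $\beta\eta=V$. Combining this with the LIL estimate (applied with $\epsilon'<\epsilon$) yields, along the LIL-minimising subsequence $(k_j)$,
\[
\log\bigl[N_{k_j}(\omega)\,h(C_{k_j}(\omega)|\Delta|)\bigr]\;\leq\; -\bigl(\epsilon-\epsilon'\bigr)\sqrt{2k_j V\log\log k_j}\,(1+o(1))+O(1)\;\longrightarrow\;-\infty.
\]
Since $C_{k_j}(\omega)\to 0$ a.s., this shows that $\Haus^{h}_{\delta}(F_\omega)\to 0$ as $\delta\to 0$ along this subsequence, proving $\Haus^{h_1(\cdot,\beta,\epsilon)}(F_\omega)=0$ almost surely.

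The main obstacle is not the LIL itself but the bookkeeping needed to match the exponential term in the gauge with the LIL rate: one has to verify that the asymptotic expansion of $\log\log\log(1/C_k^\beta)$ really is $\log\log k + O(1)$ almost surely (a consequence of the LLN for $\log C_k$ together with the fact that $k\eta$ is bounded away from $0$ and $\infty$ in log scale) and that the $o(1)$ correction is genuinely negligible compared to the $(\epsilon-\epsilon')\sqrt{2kV\log\log k}$ margin. The doubling property of $h$ (Lemma~\ref{lma:doubling}) is then used to absorb the $|\Delta|$-factor and any multiplicity from Lemma~\ref{upperasslem1} if one wishes to pass from cylinder covers to ball covers, but is not strictly needed for the zero-measure conclusion from cylinders alone.
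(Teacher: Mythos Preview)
Your proof is correct and follows essentially the same route as the paper: cover by level-$k$ cylinders, apply the law of the iterated logarithm to $\sum_{i=1}^k \log\fS_{\omega_i}^s$, and match the exponent in the gauge to the LIL rate via the choice $\beta\eta = V$. The only cosmetic difference is that you identify $\eta = -\E(\log c_{\omega_1})$ explicitly through the strong law of large numbers, whereas the paper extracts $\eta$ from the boundedness of $k^{-1}\log\bigl(1/(c_{\omega_1}\cdots c_{\omega_k})\bigr)$ along the LIL subsequence; both arguments lead to the same constant.
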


To prove this we need the \emph{Law of the iterated logarithm (LIL)}, see \eg Athreya and
Lahiri~\cite{AthreyaMeasure}. 

\begin{prop}[Law of the iterated logarithm (LIL)]
 \label{thm:LIL} Let $\{X_i\}_{i\in\N}$ be a sequence of
i.i.d.\ random variables on a probability space $(\Omega,P)$ with mean $m_0=\int_{\Omega}X_0
dP(\omega)$ and variance $V_0=\Var(X_0)$. Then, almost surely, \[ \limsup_{k\to\infty}\frac{\sum_{i=1}^k (X_i -
m_0)}{\sqrt{(2V_0 k \log\log V_0 k)}}=1 \] and similarly \[ \limsup_{k\to\infty}\frac{\sum_{i=1}^k
(X_i - m_0)}{\sqrt{(2V_0 k \log\log V_0 k)}}=1.  \] \end{prop}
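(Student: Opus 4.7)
The plan is to prove the classical Hartman--Wintner law of the iterated logarithm for i.i.d.\ square-integrable summands. After centering and rescaling it suffices to treat $m_0=0$, $V_0=1$ and establish $\limsup_{k\to\infty}S_k/\psi(k)=1$ a.s., where $S_k=\sum_{i\le k}X_i$ and $\psi(k)=\sqrt{2k\log\log k}$; the twin liminf statement (the second display, which as written duplicates the first) then follows by applying the same result to $-X_i$.

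My first step is to reduce the problem to Brownian motion by Skorokhod embedding. On an enlarged probability space I construct a standard Brownian motion $B$ together with i.i.d.\ stopping times $\tau_1,\tau_2,\dots$ such that $B_{\tau_1}\stackrel{d}{=}X_1$ and $\E\tau_1=V_0=1$. Writing $T_k=\tau_1+\cdots+\tau_k$, the process $(B_{T_k})_{k\ge1}$ has the same law as $(S_k)_{k\ge1}$, and the strong law gives $T_k/k\to 1$ almost surely.

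Next I prove the Khinchin LIL for Brownian motion, $\limsup_{t\to\infty}B_t/\sqrt{2t\log\log t}=1$ a.s. For the upper bound I combine the reflection principle estimate $\Prob(\max_{s\le t}B_s\ge a)\le 2 e^{-a^2/(2t)}$ with a geometric subsequence $t_n=\theta^n$, $\theta>1$: the event that $\max_{s\le t_{n+1}}B_s$ exceeds $\lambda\sqrt{2t_n\log\log t_n}$ is summable for $\lambda>1$, so Borel--Cantelli together with $\theta\downarrow 1$ yields the $\le 1$ half. The matching $\ge 1$ half uses independent Brownian increments $B_{t_{n+1}}-B_{t_n}$ along a rapidly growing subsequence together with the second Borel--Cantelli lemma applied to the lower Gaussian tail, plus the already established upper bound to dominate $B_{t_n}$. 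Finally I transfer back via the time change: since $T_k/k\to 1$ a.s., also $\sqrt{2T_k\log\log T_k}/\psi(k)\to 1$, so the two limsups coincide.

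The main technical obstacle is the Skorokhod embedding itself, whose existence for an arbitrary mean-zero finite-variance distribution is classical but nontrivial; the Dubins or Az\'ema--Yor construction via iterated dyadic stopping times suffices, and one must verify $\E\tau_1=V_0$. A self-contained alternative that avoids embedding is the direct Hartman--Wintner method: truncate $X_i$ at level $\epsilon\psi(i)$, apply Bernstein's exponential inequality to the truncated sum on a geometric subsequence, and handle the tail via $\sum_i\Prob(|X_i|>\epsilon\psi(i))<\infty$ (a consequence of $\E X_1^2<\infty$ combined with Kolmogorov's convergence argument); this route is longer but is the one likely followed in Athreya--Lahiri.
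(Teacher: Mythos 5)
The paper does not actually prove this proposition: it is stated as a classical black-box result with a pointer to Athreya and Lahiri, and the manuscript supplies no argument of its own. So there is no ``paper's proof'' to compare against; what you have written is a stand-alone proof sketch of Hartman--Wintner.

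As such it is essentially correct, and you rightly flag that the second display in the statement is a typographical duplicate of the first (the intended second statement is the $\liminf=-1$ version, which, as you say, follows by applying the $\limsup$ result to $-X_i$). The Skorokhod-embedding route --- embed $S_k=B_{T_k}$, prove Khinchin's LIL for $B$ by reflection plus Borel--Cantelli along a geometric time scale, then transfer --- is one of the two standard proofs, and the alternative Hartman--Wintner truncation/Bernstein route you mention is the other.

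One step is a bit too quick, though. From $T_k/k\to1$ a.s.\ you conclude that $\limsup_k B_{T_k}/\psi(k)$ equals $\limsup_{t\to\infty} B_t/\sqrt{2t\log\log t}$, but the asymptotics of the denominator alone do not give this: $T_k$ is a random, gappy subsequence of times, and a priori the $\limsup$ along it could be strictly smaller. What is actually needed in addition is an oscillation bound of the form
\[
\sup_{|t-k|\le \epsilon k}\,\lvert B_t-B_k\rvert \;=\; o\bigl(\psi(k)\bigr)\quad\text{a.s.,}
\]
obtained from the reflection principle (or L\'evy's modulus) together with Borel--Cantelli along the same geometric subsequence, so that replacing the deterministic time $k$ by $T_k\in[(1-\epsilon)k,(1+\epsilon)k]$ perturbs $B$ by a negligible amount. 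Without this, both the $\le 1$ and the $\ge 1$ halves of the transfer are incomplete. With that lemma inserted, the argument closes; everything else (the reduction to $m_0=0$, $V_0=1$, the upper/lower bounds for the Brownian LIL, the implicit standing assumption $0<V_0<\infty$) is fine.
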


\begin{proof}[Proof of Theorem~\ref{thm:uppergauge}]
Let $\mathcal{O}$ be the open set guaranteed by the UOSC, we assume without loss of generality that $\lvert\mathcal{O}\rvert=1$.  From the definition of Hausdorff measure
\[
\Haus^{h_{1}(t,\beta,\epsilon)}(F_{\omega})\leq\sum_{e\in\mathbf{C}_{\omega}^k}
h_{1}\left(\lvert\overline{ f_e(\mathcal{O})}\rvert,\beta,\epsilon\right)
\]
for all $k\in\N$.
So, writing $v=\Var(\log\fS_{\omega_{1}}^s)$,
\begin{align*}
\Haus^{h_{1}(t,\beta,\epsilon)}(F_\omega)&\leq \liminf_{k\to\infty}\sum_{e\in\mathbf{C}_{\omega}^k}
h_{1}\left(\lvert \overline{f_e(\mathcal{O})}\rvert,\beta,\epsilon\right)\\
&=\liminf_{k\to\infty}\left(\prod_{i=1}^{k}\#\mathbb{I}_{\omega_i}\right)(c_{\omega_1}c_{\omega_2}\dots
c_{\omega_k})^{s}\\
&\hspace{0.8cm}\cdot\exp\left(\sqrt{2\beta\log(1/(c_{\omega_1} \dots c_{\omega_k}))\log\log\beta\log(1/(c_{\omega_1} \dots c_{\omega_k}))}\right)^{1-\epsilon}\\&\\
&=\liminf_{k\to\infty}\exp\left[\left(\sum_{i=1}^{k}\log\fS_{\omega_i}^{s}\right)\right.\\
&\hspace{3cm}+(1-\epsilon)\sqrt{2k\beta\log(C_{\omega}^k)\log\log(\beta k\log(C_{\omega}^k))}\Bigg]\\
\intertext{for $C_{\omega}^k=(c_{\omega_1}c_{\omega_2}\dots c_{\omega_k})^{-1/k}$, and so}
&=\liminf_{k\to\infty}\exp\Bigg[\left(\sum_{i=1}^{k}\log\fS_{\omega_i}^{s}\right)\\&\\
&\hspace{2.8cm}+(1-\epsilon)\sqrt{2\frac{\log C_{\omega}^k}{\eta}kv\log\log\left(\frac{\log C_{\omega}^k}{\eta}kv\right)}\Bigg].
\end{align*}
Note that we can apply the law of the iterated logarithm, Proposition~\ref{thm:LIL}, to sums over
the random variables $Y_i=\log\fS_{\omega_i}^{s}$ where $Y_i$ are i.i.d.\ with $\E(Y_1)=0$ and $0<\Var(Y_1)<\infty$. Thus
\[
\Prob_H\left\{ \sum_{i=1}^k Y_i \leq -(1-\epsilon/2)\sqrt{2vk\log\log(vk)}\text{ for infinitely many }k\in\N\right\}=1
\]
Let $(i_1,i_2,\dots)$ be a sequence of indices where the above inequality holds. Note that $c_{\min}\leq C_{\omega}^k<c_{\max}$ for all $\omega$ and $k$, and so $\log c_{\min}\leq \log C_{\omega}^k<\log c_{\max}$. Therefore, for some uniform $\widetilde{\eta}\in [\log c_{\min},\log c_{\max}]$, we have $\log  C_{\omega}^{i_k}/\widetilde{\eta}\geq 1$ for infinitely many $k$, for almost all $\omega$. We can thus choose $\eta$ the greatest value for which this is satisfied.

We get, almost surely,
\[
\Haus^{h_{1}(t,\beta,\epsilon)}(F_\omega)\leq\lim_{k\to\infty}\exp\left(-\frac{\epsilon}{3}\sqrt{2\Var(\log\fS_{\omega_{1}}^{s})k\log\log(\Var(\log\fS_{\omega_{1}}^{s})k)}\right)=0,
\]
completing the proof.
\end{proof}
We note that if $c_\lambda=\widetilde c$ for every $\lambda$, then $\eta=\log \widetilde c$. Note
also the following corollary which implies that the `fine dimension', \ie the dimension according to
the gauge function, is distinct from the random recursive case.
\begin{cor}
Let $F_{\omega}$ be the random homogeneous attractor associated to the RIFS $(\mathbb{L},\mu)$
satisfying the UOSC  and suppose that $c_{\lambda}^{i}=c_{\lambda}\in[c_{\min},c_{\max}]$ for every
$i\in\{1,\dots,\#\mathbb{I}_\lambda\}$ and $\lambda\in\Lambda$, where $0<c_{\min} \leq c_{\max}<1$.
Let $h_{\beta}^{s}(t)=t^{s}(\log\log(1/t))^{\beta}$, where $s=\ess\dim_H F_{\omega}$ and $\beta>0$,
then \[\Haus^{h_{\beta}^{s}(t)}(F_\omega)=0.\quad\text{(a.s.)}\]
\end{cor}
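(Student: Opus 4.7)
The plan is to derive the corollary directly from Theorem~\ref{thm:uppergauge} by a gauge-function comparison. Fix the arbitrary exponent $\beta > 0$ appearing in $h_\beta^s$, and let $\beta_* = \Var(\log\fS_{\omega_1}^s)/\eta$ and some $\epsilon \in (0,1)$ be the constants supplied by Theorem~\ref{thm:uppergauge}. If I can verify the pointwise inequality $h_\beta^s(t) \leq h_1(t,\beta_*,\epsilon)$ for all sufficiently small $t$, then monotonicity of $h$-Hausdorff measure in the gauge function immediately yields
$$\Haus^{h_\beta^s}(F_\omega) \;\leq\; \Haus^{h_1(\cdot,\beta_*,\epsilon)}(F_\omega) \;=\; 0 \quad \text{(a.s.)}$$

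Dividing by $t^s$ and taking logarithms, the desired comparison reduces to showing
$$\beta \log\log(1/t) \;\leq\; (1-\epsilon)\sqrt{2\beta_* \log(1/t) \log\log(\beta_* \log(1/t))}$$
for all $t$ in a right neighbourhood of $0$. Setting $u = \log(1/t) \to \infty$, this is equivalent to
$$\beta^2 (\log u)^2 \;\leq\; 2(1-\epsilon)^2 \beta_* \, u \log\log(\beta_* u),$$
which holds for all sufficiently large $u$ because the right-hand side grows like $u \log\log u$ while the left-hand side only grows like $(\log u)^2$. Thus the comparison of gauges holds on a right neighbourhood of $0$, which (per the convention recorded after Definition~2.7) is all that is relevant for Hausdorff measure.

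The final step is routine. If $h_\beta^s(t) \leq h_1(t,\beta_*,\epsilon)$ for all $t \leq t_0$, then for every $\delta < t_0$ and every countable $\delta$-cover $\{U_i\}$ of $F_\omega$ one has $\sum_i h_\beta^s(\lvert U_i\rvert) \leq \sum_i h_1(\lvert U_i\rvert,\beta_*,\epsilon)$; taking infima over covers and then $\delta\to 0$ gives $\Haus^{h_\beta^s}(F_\omega) \leq \Haus^{h_1(\cdot,\beta_*,\epsilon)}(F_\omega)$, and Theorem~\ref{thm:uppergauge} closes the argument. I do not foresee any serious obstacle here: the entire probabilistic content (UOSC estimate, LIL, equicontraction) has already been absorbed into Theorem~\ref{thm:uppergauge}, and the corollary is essentially a quantitative statement of the trivial asymptotic $(\log u)^2 = o(u\log\log u)$.
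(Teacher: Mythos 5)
Your proof is correct and takes essentially the same route as the paper: both establish that $h_\beta^s(t)/h_1(t,\beta_*,\epsilon)\to 0$ as $t\to 0$ (equivalently, eventual pointwise domination $h_\beta^s\leq h_1$) and then conclude by monotonicity of the $h$-Hausdorff measure in the gauge, combined with Theorem~\ref{thm:uppergauge}. One small slip: taking logarithms of $(\log\log(1/t))^\beta$ yields $\beta\log\log\log(1/t)$, not $\beta\log\log(1/t)$; since $\log\log(1/t)>\log\log\log(1/t)$ for small $t$ you have in fact proven a \emph{stronger} inequality, so the argument still closes.
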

\begin{proof}
We check
\begin{align*}
\lim_{t\to0}\frac{h_{\beta}^{s}(t)}{h_1(t,\beta',\epsilon)}&=\lim_{t\to0}\frac{t^{s}(\log\log(1/t))^{\beta}}{t^{s}\exp\left(\sqrt{2\beta'(\log
  (1/t))(\log \log \log (1/t^{\beta'}))}\right)^{1-\epsilon}}\\
&=\lim_{t\to0}\frac{(\log\log(1/t))^{\beta}}{\exp\left((1-\epsilon)\sqrt{2\beta'(\log (1/t))(\log
  \log \log (1/t^{\beta'}))}\right)}\\
&\leq\lim_{t\to0}\frac{(\log(1/t))^{\beta}}{\exp\left((1-\epsilon)\sqrt{2\beta'(1/t)}\right)}=0.
\end{align*}
This holds for all $\beta,\beta',\epsilon>0$ and the behaviour of the limits is sufficient for the
desired result.
\end{proof}

Considering $h_{1}(t,\beta,-\epsilon)$ the law of the iterated logarithm guarantees a similar lower bound where the sum diverges. We will then use the mass distribution principle for the $h$-Hausdorff measure to establish infinite Hausdorff measure for $\epsilon>0$.

\begin{theo}[Mass Distribution Principle]\label{thm:massDistribution}
Let $\mu$ be a finite measure supported on $F$ and suppose that for some gauge function $h$ there are constants $c>0$ and $r_0$ such that $\mu(U)\leq c h(\lvert U\rvert)$ for all sets $U$ with $\lvert U\rvert<r_0$. Then $\Haus^h(F)\geq \mu(F)/c$.
\end{theo}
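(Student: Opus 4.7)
The plan is to bound any admissible $\delta$-cover of $F$ from below using the measure $\mu$, then take infima and send $\delta\to 0$. The underlying idea is that since $\mu$ is concentrated on $F$, every cover must carry all of $\mu$'s mass, and the hypothesis lets us convert a mass bound into a gauge-sum bound term by term.

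Concretely, I would fix $\delta < r_0$ and let $\{U_i\}_{i\in\N}$ be an arbitrary countable $\delta$-cover of $F$. The first step is the chain
\[
\mu(F) \;\leq\; \mu\Bigl(\bigcup_{i} U_i\Bigr) \;\leq\; \sum_{i} \mu(U_i),
\]
using that $\mu$ is supported on $F$ together with countable subadditivity. Since $|U_i|\leq \delta < r_0$, the hypothesis gives $\mu(U_i)\leq c\,h(|U_i|)$, so summing yields $\sum_i h(|U_i|) \geq \mu(F)/c$. Taking the infimum over all such covers shows $\Haus^h_\delta(F) \geq \mu(F)/c$, and letting $\delta \to 0$ completes the proof since the right-hand side is independent of $\delta$.

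The only obstacle worth mentioning is a minor measurability point: the hypothesis bounds $\mu(U)$ for arbitrary (not necessarily Borel) sets of small diameter, so the subadditivity step is cleanest if one works with the outer measure associated with $\mu$, or equivalently replaces each $U_i$ by its closure (which is Borel and has the same diameter). Apart from this routine cosmetic issue, there is no real difficulty, and the argument is purely deterministic — no probabilistic input enters, which is why the principle can be applied uniformly in $\omega$ once a suitable random $\mu = \mu_\omega$ has been constructed on $F_\omega$ in the intended application (\eg to lower-bound $\Haus^{h_1(t,\beta,-\epsilon)}(F_\omega)$ via the divergent side of the law of the iterated logarithm).
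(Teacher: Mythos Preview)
Your proof is correct and follows essentially the same approach as the paper: bound $\mu(F)$ by subadditivity over an arbitrary $\delta$-cover, apply the hypothesis termwise, take the infimum over covers, and let $\delta\to 0$. Your version is in fact slightly more careful, since you make explicit the restriction $\delta<r_0$ needed for the hypothesis to apply and flag the (harmless) measurability issue, both of which the paper's proof elides.
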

While the proof can be found in a number of places (\eg \cite{Olsen03a}), we recall it for completeness.
\begin{proof}
Consider any countable open cover $\{O_i\}$ of $F$.
Then
\[
\mu(F)\leq\mu\left(\bigcup_i O_i\right)\leq\sum_i \mu(O_i)\leq c\sum_i h(\lvert O_i \rvert).
\]
But then, taking the infimum for each $\delta>0$, we have $\Haus^h (F)\geq\Haus^h_{\delta}(F)\geq \mu(F)/c$.
\end{proof}

\begin{theo}\label{thm:lowergauge}\index{Hausdorff measure}
Let $F_{\omega}$ be the random homogeneous attractor associated to the RIFS $(\mathbb{L},\mu)$ satisfying the UOSC  and suppose that $c_{\lambda}^{i}=c_{\lambda}\in[c_{\min},c_{\max}]$ for every $i\in\{1,\dots,\#\mathbb{I}_\lambda\}$ and $\lambda\in\Lambda$, where $0<c_{\min} \leq c_{\max}<1$. 
Let $\epsilon>0$, $s=\ess\dim_H F_\omega$ and $\beta_0=\eta_0\Var(\log\fS_{\omega_{1}}^{s})$ for some $\eta_0\in\R$ (arising in the proof), then 
\[
\Haus^{h_{1}(t,\beta_0,-\epsilon)}(F_{\omega})=\infty
\]
holds almost surely.
\end{theo}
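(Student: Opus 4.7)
The plan is to construct the natural Bernoulli-type measure $\mu_\omega$ on $F_\omega$ and apply the Mass Distribution Principle (Theorem~\ref{thm:massDistribution}) with the gauge $h_1(\cdot,\beta_0,-\epsilon)$, using the Law of the Iterated Logarithm (Proposition~\ref{thm:LIL}) to match rates of decay. Concretely, I would take $\mu_\omega$ to be the weak-$*$ limit of the probability measures that distribute mass uniformly over level-$k$ cylinders, so that $\mu_\omega(f_e(\overline{\mathcal{O}})) = 1/\prod_{i=1}^k \#\mathbb{I}_{\omega_i}$ for each $e\in\mathbf{C}_\omega^k$; by equicontractivity this equals $r_k^s/\prod_{i=1}^k \fS^s_{\omega_i}$ with $r_k = c_{\omega_1}\cdots c_{\omega_k}$ the common level-$k$ diameter, and $\mu_\omega$ is supported on $F_\omega$ with total mass one.

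For $r$ small, choose the unique $k=k(r)$ with $r_k \leq r < r_{k-1}$; this is exactly the equicontractive stopping time of Lemma~\ref{upperasslem1}, so any ball $B(x,r)$ meets at most $(4/c_{\min})^d$ cylinders at level $k$, giving
\[
\mu_\omega(B(x,r)) \leq (4/c_{\min})^d\cdot\frac{r^s}{\prod_{i=1}^k \fS^s_{\omega_i}}.
\]
The Strong Law of Large Numbers applied to $-\log c_{\omega_i}$ yields $k\sim \log(1/r)/\eta^*$ almost surely, where $\eta^* = -\E(\log c_{\omega_1})>0$. The Law of the Iterated Logarithm applied to the i.i.d.\ random variables $Y_i = -\log\fS^s_{\omega_i}$, which have mean zero (by the defining equation for $s$) and finite positive variance $v=\Var(\log\fS^s_{\omega_1})$, then gives for almost every $\omega$ and all sufficiently small $r$
\[
-\sum_{i=1}^k \log\fS^s_{\omega_i} \leq \bigl(1+\tfrac{\epsilon}{2}\bigr)\sqrt{2vk\log\log(vk)} \leq \bigl(1+\tfrac{\epsilon}{2}\bigr)\sqrt{2\beta_0 \log(1/r)\log\log\log(1/r^{\beta_0})},
\]
provided we set $\eta_0 = 1/\eta^*$ so that $\beta_0 = v\eta_0$; the final inequality uses that both $\log\log(vk)$ and $\log\log\log(1/r^{\beta_0})$ are asymptotic to $\log\log\log(1/r)$.

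Combining these estimates and dividing by the gauge,
\[
\frac{\mu_\omega(B(x,r))}{h_1(r,\beta_0,-\epsilon)} \leq (4/c_{\min})^d\cdot \exp\!\left(-\tfrac{\epsilon}{2}\sqrt{2\beta_0\log(1/r)\log\log\log(1/r^{\beta_0})}\right),
\]
which tends to $0$ as $r\to 0$. Hence for any $c>0$ the hypothesis of the Mass Distribution Principle is met on some neighbourhood of $0$, so $\Haus^{h_1(\cdot,\beta_0,-\epsilon)}(F_\omega)\geq \mu_\omega(F_\omega)/c = 1/c$; letting $c\to 0$ forces the Hausdorff measure to be infinite. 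The doubling property of $h_1$ from Lemma~\ref{lma:doubling} allows one to replace arbitrary small-diameter test sets by balls of comparable radius at a bounded multiplicative cost.

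The main obstacle I anticipate is the careful bookkeeping needed to pass from the $k\to\infty$ form of the LIL to the $r\to 0$ form of the gauge: in particular, verifying that the error coming from the mismatch between $\log\log(vk)$ and $\log\log\log(1/r^{\beta_0})$, together with the fluctuations of $k$ around $\log(1/r)/\eta^*$ that the SLLN controls only to leading order, can be absorbed into the slack factor $\epsilon/2$ without degrading the exponent. The choice of $\eta^*$ (and hence of $\eta_0$) delivering a clean almost-sure bound along the full sequence of $k$ should transfer verbatim from the argument in the proof of Theorem~\ref{thm:uppergauge}, run now in the opposite direction.
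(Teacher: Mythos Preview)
Your approach is essentially the paper's: the same uniform cylinder measure, the same covering estimate from Lemma~\ref{upperasslem1}, the law of the iterated logarithm applied to $Y_i=\log\fS^s_{\omega_i}$, and the Mass Distribution Principle with an arbitrary constant to force infinite measure. The one structural difference is in how the passage from $k$ to $r$ is handled. You invoke the SLLN to obtain $k\sim\log(1/r)/\eta^*$ with $\eta^*=-\E(\log c_{\omega_1})$, which yields the specific value $\eta_0=1/\eta^*$ but creates exactly the bookkeeping worry you flag: the SLLN only controls $k/\log(1/r)$ to leading order, so the substitution into $\sqrt{2vk\log\log(vk)}$ is an asymptotic equivalence rather than an inequality, and one must check that the fluctuation can be absorbed into the $\epsilon/2$ slack. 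The paper sidesteps this entirely by observing that $\eta_k(\omega):=k/\log D_k(\omega)$ is \emph{uniformly} bounded in $k$ and $\omega$ (since $c_\lambda\in[c_{\min},c_{\max}]$), and simply takes $\eta_0$ to be a uniform upper bound for $\eta_k$. This makes the inequality $vk\leq\beta_0\log D_k(\omega)$ hold exactly for all $k$, so no SLLN and no asymptotic matching are needed; the price is a cruder (larger) $\beta_0$. Your route would give a sharper constant if the bookkeeping is carried out, but the paper's shortcut is what makes the argument clean.
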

\begin{proof}
We use the same notation of the proof of Theorem~\ref{thm:uppergauge}. Let $\epsilon>0$ be given and write $v=\Var(Y_1)$. Then the law of the iterated logarithm, Theorem~\ref{thm:LIL}, implies
\[
\Prob_H\left\{ \sum_{i=1}^k Y_i \leq -(1+\epsilon)\sqrt{2vk\log\log(vk)}\text{ for infinitely many }k\in\N\right\}=0
\]
and so, writing $D_k (\omega)=(c_{\omega_1} c_{\omega_2}\dots c_{\omega_k})^{-1}$, and $\eta_k(\omega)=k/\log D_k(\omega)$,
\begin{multline}\label{eq:bddproduct}
\Prob_H\bigg\{ \fS_{\omega_1}^s \fS_{\omega_2}^s \dots \fS_{\omega_k}^s \geq C\exp\Big(-(1+\epsilon)\sqrt{2v\eta_k(\omega)\log( D_k(\omega))}\\
\cdot\sqrt{\log\log(v\eta_k(\omega)\log(D_k(\omega))}\Big)\text{ for all }k\geq l_0(\omega)\text{ where }l_0(\omega)\in\N\bigg\}=1
\end{multline}
for any $C\in\R$.
Since $c_\lambda$ is bounded away from $0$ and $1$, the sequence $\eta_k(\omega)$ is uniformly bounded in $k$ and $\omega$. Therefore there exists uniform $\eta_0$ such that~(\ref{eq:bddproduct}) holds with $\eta_k(\omega)$ replaced by $\eta_0$.
Then, on a full measure set,
\begin{align}
  \left( \prod_{i=1}^k \cN_{\omega_i}\right)&\geq C{D_k(\omega)^s}\exp\Big(-(1+\epsilon)\sqrt{2v\eta_0\log( D_k(\omega))\log\log(v\eta_0\log(D_k(\omega))}\Big)\nonumber\\
&=\frac{C}{h_1(D_k(\omega)^{-1},\beta_0,-\epsilon)}\label{eq:usfulbound1}
\end{align}
holds for all $k\geq l_0(\omega)$.

We define a random measure $\nu_{\omega}$ on $F_\omega$. Assume $e\in\mathbf{C}_\omega^k$ for some $k\in\N$, for every basic cylinder\index{cylinder} we set 
\[
  \widetilde{\nu}_\omega(f_e(\mathcal{O}))=\left( \prod_{i=1}^k \cN_{\omega_i}\right)^{-1}.
\]
This extends to a unique random measure $\nu_\omega$ on $F_\omega$ for every $\omega\in\Omega$ by Carath\'eodory's extension theorem.\index{Carath\'eodory's extension theorem}
We now show that, almost surely, there exists $C_\omega>0$ such that $\nu_\omega(U)\leq (C_\omega/C) h_1(\lvert U\rvert ,\beta_0,-\epsilon)$ for all small enough open $U$ that intersect $F_\omega$. Let $U$ be such that $u=2\lvert U\rvert<(c_{\min})^{l_0(\omega)}$ and choose $z\in (U\cap F_\omega)$, then
\begin{align*}
\nu_\omega(U)&\leq \nu_\omega(B(z,u))\leq \nu_{\omega}\left(\bigcup_{\substack{e\in\Xi_{u}(\omega)
\\ \overline{f_e(\mathcal{O})}\cap B(z,u)\neq\varnothing}}f_e(\mathcal{O})\right)\\ &\\
&=\sum_{\substack{e\in\Xi_{u}(\omega) \\ \overline{f_e(\mathcal{O})}\cap
B(z,u)\neq\varnothing}}\hspace{-.5cm}\left( \prod_{i=1}^{k(u)} \cN_{e_i}\right)^{-1}
\leq \left(\frac{4}{c_{\min}}\right)^d\left( \prod_{i=1}^{k(u)} \cN_{e_i}\right)^{-1},
\end{align*}
by Lemma~\ref{upperasslem1}, where $k(u)$ is the common length of all $e\in \Xi_u(\omega)$. Note that by assumption $k(u)\geq l_0(\omega)$. Therefore, using (\ref{eq:usfulbound1}),
\[
\nu_\omega(U)\leq  \left(\frac{4}{c_{\min}}\right)^d {C}^{-1}  h_1 (D_{k(u)}(\omega)^{-1},\beta_0,-\epsilon).
\]
Recall that $h_1$ is doubling, $c_{\omega_1}c_{\omega_2}\dots c_{\omega_k}=D_k(\omega)<u$, and so there exists $\kappa>0$ such that 
\[
\nu_\omega(U)\leq  (\kappa/C)  h_1 (\lvert U\rvert,\beta_0,-\epsilon).
\]
Now, using the mass distribution principle, Theorem~\ref{thm:massDistribution}, we conclude 
\[
\Haus^{h_1 (t,\beta_0, -\epsilon)}(F_\omega)\geq \frac{C}{\kappa}.
\]
The desired conclusion follows from the fact that $C$ was arbitrary.
\end{proof}
Note that the constants $\eta$ and $\eta_0$ might not coincide, and thus $\beta=\beta_0$ might not hold.

Both Theorems~\ref{thm:uppergauge} and \ref{thm:lowergauge} seems to suggest that $h_1(t,\beta,0)$ with $\beta=\beta_0$ is the correct function that gives positive and finite Hausdorff measure. However, we will now show that not only is there no $\beta$ such that 
\[
0<\Haus^{h_1(t,\beta,0)}(F_\omega)<\infty,
\]
but there does not exist any gauge function $h$ that gives positive and finite Hausdorff measure.

\section{The non-existence of gauge functions for self-similar code-trees with
necks}\label{sect:nonExistenceMain}
In this section we will prove our main result, namely that random code-trees with necks and 
random homogeneous attractors do not admit a gauge function, provided they are not almost deterministic.
First, we will prove the equicontractive random homogeneous case as its proof is considerably
simpler than the full case. Further, the simplistic equicontractive case gives a better intuition as
to why the gauge function cannot exist, something that can more easily get lost in full generality.

Before we deal with these we first show that we can without loss of generalisation assume $h(t)$ is
of the form $h(t)=t^{s+g(t)}$ where $g(t)\nearrow 0$ from below as $t\to 0$.
Consider a self-similar code-tree with necks. By Condition~\ref{cond:boundedBelow} the attractor has
positive Hausdorff dimension. Since having Hausdorff dimension greater than $\alpha>0$ is a tail
event, ergodicity implies that there exists an almost sure Hausdorff dimension $s$. Thus
$\log(h(t))/\log(t)\to s$ and $g(t)\to 0$. Further, $\Haus^s (F_\tau)=0$ almost surely and we must
have $g(t)\leq 0$ for positive $h$-Hausdorff measure. Lastly, we can assume $g(t)$ to be increasing
as there is no `preferred' scale, \ie the contractions that are randomly chosen are bounded away
from $0$ and $1$ and the expected block size is finite, and having positive and finite measure is a
tail event with probability $0$ or $1$.
\subsection{Equicontractive homogeneous random attractors}\label{sect:equicontractive}
Let us first consider the case when $(\mathbb{L},\mu)$ satisfies the UOSC and all maps in all IFSs have the same contraction rate $c\in(0,1)$.
Then, 
\[
\sum_{e\in \mathbf{C}_{\omega}^k}h(c)=
\left(\prod_{i=1}^k \cN_{\omega_{i}}\right)h(c^k).
\]
As we will show below, finding a gauge function for positive and finite Hausdorff measure reduces to the problem of finding a gauge function such that 
\[
\liminf_{k\to\infty}\left(\prod_{i=1}^k \cN_{\omega_{i}}\right)h(c^k)\in(0,\infty)
\]
almost surely.
But, for $h_k=\log h(c^k)$,
\[
\liminf_{k\to\infty}\left(\prod_{i=1}^k \cN_{\omega_{k}}\right)h(c^k)=
\liminf_{k\to\infty}\exp\left(h_k+\sum_{i=1}^k \log \cN_{\omega_{i}}\right).
\]
Therefore, the problem of finding a  suitable gauge function becomes equivalent to finding a (fixed)
sequence $(h_k)$
such that $\liminf_{k\to\infty} h_k+\sum_{i=1}^k X_i$ is positive and finite for some sequence of
i.i.d.\ random variables $X_i$ with positive variance. Something that clearly does not exist by the
Central Limit Theorem.

Since the general case is somewhat more complex to deal with, we will first prove the same result
for equicontractive sets using methods that will generalise more readily.
Throughout the remainder of this section we assume that $(\mathbb{L},\mu)$ satisfies the UOSC and is equicontractive, \ie for every $\lambda\in\Lambda$ all maps $f_\lambda^i\in\mathbb{I}_\lambda$ have contraction ratio $c_\lambda^i=c_\lambda$.
This is slightly more general than the case considered just above.
We will continue to assume Condition~\ref{cond:boundedBelow} and that $(\mathbb{L},\mu)$ is not almost deterministic. In particular, this means that there exists $0<\gamma<1$ and $\epsilon>0$ such that
\begin{equation}\label{eq:gap}
p_0:=\mu\Big\{ \sum_{f_\lambda^i\in\mathbb{I}_\lambda} (c_\lambda)^{s-\epsilon}\leq \gamma\Big\}>0
\end{equation}
where $s$ is the almost sure Hausdorff dimension of the homogeneous random attractor $F_\omega$ with
$\omega\in\Omega=\Lambda^{\N}$ and $\Prob_H=\mu^{\N}$.

Let $h$ be an arbitrary gauge function. Given $\omega\in\Omega$ we obtain
\[
\sum_{e\in \mathbf{C}_{\omega}^k}h(c_e)=
\sum_{e\in \mathbf{C}_{\omega}^k}h(c_{\omega_1}c_{\omega_2}\dots c_{\omega_{k}})=\left(\prod_{i=1}^k \cN_{\omega_{i}}\right)h(c_{\omega_1}c_{\omega_2}\dots c_{\omega_{k}}).
\]

The equicontractive case is simpler for two reasons. First, it allows us to write the sum of gauge values as the product of the number of descendants with the gauge function at a single value. Further, the equicontractive property also implies that the Hausdorff measure is given by the lower limit of the sums of gauge functions values.

\begin{lma}\label{lma:levelsetsareequal}
Let $(\mathbb{L},\mu)$ be a RIFS that satisfies the UOSC, Condition~\ref{cond:boundedBelow}, is
equicontractive, and let $F_\omega$ be the associated random homogeneous attractor.
Then, for any gauge function $h$, and all $\omega\in\Omega$ there exists $\kappa_\omega>0$ such that
\begin{equation}\label{eq:equalLevels}
\Haus^h (F_\omega)=\kappa_\omega \liminf_{k\to\infty}\sum_{e\in \mathbf{C}_{\omega}^k}h(c_e).
\end{equation}
\end{lma}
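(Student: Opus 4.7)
The plan is to establish that $\Haus^{h}(F_\omega)$ and $S_\omega:=\liminf_{k\to\infty}\sum_{e\in\mathbf{C}_\omega^k}h(c_e)$ are comparable up to a multiplicative constant depending on $\omega$ (and $h$), and then simply set $\kappa_\omega:=\Haus^h(F_\omega)/S_\omega$ (with the convention $\kappa_\omega:=1$ when both quantities vanish or both are infinite). Equicontractivity collapses the level-$k$ sum,
\[
  s_k:=\sum_{e\in\mathbf{C}_\omega^k}h(c_e)=|\mathbf{C}_\omega^k|\cdot h(\rho_k),\qquad \rho_k:=c_{\omega_1}c_{\omega_2}\cdots c_{\omega_k},
\]
assuming WLOG $|\overline{\mathcal{O}}|=1$ and taking $\Delta:=\overline{\mathcal{O}}$, so that $S_\omega=\liminf_k s_k$.

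The upper bound is immediate: for each $k$, the family $\{f_e(\Delta):e\in\mathbf{C}_\omega^k\}$ is a $\rho_k$-cover of $F_\omega$ of total $h$-weight $s_k$, hence $\Haus^h_{\rho_k}(F_\omega)\leq s_k$. Letting $k\to\infty$ (so $\rho_k\to 0$) yields $\Haus^h(F_\omega)\leq\liminf_k s_k=S_\omega$.

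The substantive step is the lower bound. Fix $S_0<S_\omega$ and choose $k_0$ so that $s_k\geq S_0$ for every $k\geq k_0$. Given $\delta<\rho_{k_0}$ and any $\delta$-cover $\{U_i\}$ of $F_\omega$, use compactness of $F_\omega$ to reduce to a finite cover, and discard any $U_i$ disjoint from $F_\omega$. For each $i$ let $k_i$ be the unique level with $\rho_{k_i}\leq|U_i|<\rho_{k_i-1}$, so automatically $k_i>k_0$. Equicontractivity gives $\Xi_{|U_i|}(\omega)=\mathbf{C}_\omega^{k_i}$, so Lemma~\ref{upperasslem1} produces $C_i\subseteq\mathbf{C}_\omega^{k_i}$ with $|C_i|\leq M:=(4/c_{\min})^d$ and $U_i\cap F_\omega\subseteq\bigcup_{e\in C_i}\overline{f_e(\mathcal{O})}$. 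Monotonicity of $h$ then gives
\[
  h(|U_i|)\geq h(\rho_{k_i})=\frac{s_{k_i}}{|\mathbf{C}_\omega^{k_i}|}\geq\frac{S_0}{|\mathbf{C}_\omega^{k_i}|}.
\]

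It remains to show $\sum_i|\mathbf{C}_\omega^{k_i}|^{-1}\geq 1/M$, which will give $\sum_i h(|U_i|)\geq S_0/M$ and hence $\Haus^h_\delta(F_\omega)\geq S_0/M$; passing to $\delta\to 0$ and $S_0\nearrow S_\omega$ then yields $\Haus^h(F_\omega)\geq S_\omega/M$, so $\kappa_\omega\in[1/M,1]$ as required. Set $K:=\max_i k_i$, which is finite since the cover is finite. Every level-$K$ word $e'\in\mathbf{C}_\omega^K$ satisfies $f_{e'}(F_{\sigma^K\omega})\subseteq F_\omega\cap f_{e'}(\Delta)\neq\varnothing$, so there is a point of $F_\omega$ in this cylinder that is covered by some $U_i$, and therefore lies in some $\overline{f_e(\mathcal{O})}$ with $e\in C_i$; the UOSC then forces $e'$ to be a level-$K$ descendant of $e$. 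Since each $e\in C_i$ has exactly $|\mathbf{C}_\omega^K|/|\mathbf{C}_\omega^{k_i}|$ descendants at level $K$, we conclude
\[
  |\mathbf{C}_\omega^K|\leq \sum_i M\cdot\frac{|\mathbf{C}_\omega^K|}{|\mathbf{C}_\omega^{k_i}|},
\]
and the desired inequality follows. The one genuinely delicate point I anticipate is the UOSC/descendant step, because closures of same-level cylinders can meet on their boundary; this is handled by selecting the witness point from the interior of $f_{e'}(\mathcal{O})$ (possible because $F_{\sigma^K\omega}$ has points interior to $\mathcal{O}$ in the non-degenerate situation covered by Condition~\ref{cond:boundedBelow}), or equivalently by a harmless perturbation of the open set $\mathcal{O}$. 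Everything else is standard bookkeeping.
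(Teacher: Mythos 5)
Your approach is correct in substance, and it is a genuinely different route from the paper's. The paper's proof of the lower bound runs a Vitali-cover argument to extract an approximately efficient cover, extends it to a tree section via Lemma~\ref{upperasslem1}, and then invokes homogeneity to ``flatten'' that section to a single tree level; the flattening step is sketched rather loosely. Your proof instead keeps the arbitrary finite $\delta$-cover $\{U_i\}$, replaces each $U_i$ by at most $M=(4/c_{\min})^{d}$ level-$k_i$ cylinders via Lemma~\ref{upperasslem1}, lower-bounds $h(|U_i|)$ by $S_0/|\mathbf{C}_\omega^{k_i}|$ using that the level sums stabilise above $S_0$, and closes with a clean double-counting at the common level $K=\max_i k_i$ to obtain $\sum_i |\mathbf{C}_\omega^{k_i}|^{-1}\geq 1/M$. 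Equicontractivity enters your argument twice: to identify $\Xi_{|U_i|}(\omega)$ with a single tree level $\mathbf{C}_\omega^{k_i}$, and (via homogeneity of the tree) to count descendants at level $K$ exactly. Your version makes the mass-distribution-style bookkeeping fully explicit where the paper leaves it at the level of ``a standard argument''.

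The one step that would fail as literally written is ``the UOSC then forces $e'$ to be a level-$K$ descendant of $e$''. Knowing that a point of $f_{e'}(F_{\sigma^K\omega})$ lies in $\overline{f_e(\mathcal{O})}$ for some $e\in C_i$ does \emph{not} force $e'\!\restriction_{k_i}=e$, because closures of distinct level-$k_i$ cylinders can meet along their boundaries under the UOSC. Your proposed repair (pick the witness in the interior of $f_{e'}(\mathcal{O})$) presupposes $F_{\sigma^K\omega}\cap\mathcal{O}\neq\varnothing$, which is a Schief-type strong-OSC assertion and is not free here. The cleaner fix is to bypass the ``which closed cylinder contains $x$'' question entirely: for each $e'\in\mathbf{C}_\omega^K$ choose any $x\in f_{e'}(\Delta)\cap F_\omega$, pick $i$ with $x\in U_i$, and set $e:=e'\!\restriction_{k_i}$. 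Then $x\in f_{e'}(\Delta)\subseteq \overline{f_e(\mathcal{O})}$ and $x\in U_i\subseteq B(z_i,|U_i|)$, so $\overline{f_e(\mathcal{O})}\cap B(z_i,|U_i|)\neq\varnothing$ and hence $e\in C_i$ by the very definition of the set counted in Lemma~\ref{upperasslem1}; $e'$ is then automatically a level-$K$ descendant of an element of $C_i$ and your counting goes through unchanged. (You should also dispose of the harmless case $|U_i|=0$, e.g.\ by noting such sets contribute nothing to $\sum_i h(|U_i|)$ and deleting them affects $F_\omega$ only on a countable set.)
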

\begin{proof}

Let $\mathcal{O}$ be the open set guaranteed by the UOSC. Clearly,
\[
F_\omega\subseteq \bigcup_{e\in\mathbf{C}_\omega^k} \overline{f_e(\mathcal{O})}
\]
and so 
\[
\Haus^h (F_\omega)\leq \kappa_\omega \liminf_{k\to\infty}\sum_{e\in \mathbf{C}_{\omega}^k}h(c_e)
\]
for some $\kappa_\omega$. For the lower bound, we see that
$\{\overline{f_e(\mathcal{O})}\}_{e\in\mathbf{C}_\omega^*}$ is a Vitali cover. A standard argument then gives that for every $\epsilon>0$ there exist subsets of $C_k\subset\mathbf{C}_\omega^*$ such that for every $v,w\in C_k$, neither $v$ is a parent of $w$ or vice versa, and that every $v\in C_k$ has length at least $k$. Using Lemma~\ref{upperasslem1}, we can extend $C_k$ to a full tree by admitting at most $(4/c_{\min})^d$ many further cylinders. Thus, for some set $C'_k$,
\[
\frac{1}{(4/c_{\min})^d}\sum_{e\in C'_{k}}h(c_e \cdot\lvert\mathcal{O}\rvert)\leq \Haus^h(F_\omega)+\epsilon.
\] 
Now note that by homogeneity, we can assume that all $e\in C'_{k}$ must be at the same tree level
$k$. This follows from the observation that, if $e$ and $f$ are two different nodes in levels
$k_e>k_f$ then there exists a parent of $e$, say $e'$, in level $k_f$ that does not have $f$ as a
descendant. Further, there exists a (non-trivial) collection of descendants $e_i$ such that $\sum
h(c_{e_i})\sim h(c_f)$. But since all the descendants of $f$ behave exactly as the descendants of
$e'$, we do not need to consider the children of $e'$ for an efficient cover. By induction this
extends to a comparable cover over a single tree level and 
\[
\kappa_\omega\liminf_{k\to\infty}\sum_{e\in \mathbf{C}_{\omega}^k}h(c_e)\leq \Haus^h(F_\omega)+\epsilon
\]
for some $\kappa_\omega>0$. Since $\epsilon$ was arbitrary, the desired conclusion follows.
\end{proof}

We are now ready to state and prove our main theorem of this section.

\begin{theo}\label{thm:mainequicontractive}
Let $(\mathbb{L},\mu)$ be a RIFS that satisfies the UOSC, Condition~\ref{cond:boundedBelow}, is
equicontractive, and let $F_\omega$ be the associated random homogeneous attractor.
Let $h$ be a gauge function, then for almost all $\omega\in\Omega$,
\[
\Haus^h(F_\omega)\in \{0,\infty\}.
\]
In particular, there exists no gauge function such that the $h$-Hausdorff measure is positive and finite almost surely.
\end{theo}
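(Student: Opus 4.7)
My plan is to reduce the problem via Lemma~\ref{lma:levelsetsareequal} to showing that $\liminf_k \log \Sigma_k \in \{-\infty, +\infty\}$ almost surely, where $\Sigma_k := \bigl(\prod_{i=1}^k \cN_{\omega_i}\bigr)\, h\bigl(\prod_{i=1}^k c_{\omega_i}\bigr)$ in the equicontractive case. As remarked at the start of Section~\ref{sect:nonExistenceMain}, I may assume $h(t) = t^{s+g(t)}$ with $g$ nondecreasing and $g(t) \nearrow 0$ as $t \to 0$. Setting $Y_i := \log \cN_{\omega_i} + s\log c_{\omega_i}$, $T_k := \sum_{i=1}^k Y_i$, and $A_k := \sum_{i=1}^k \log c_{\omega_i}$, a direct calculation gives
\[
\log \Sigma_k \;=\; T_k + A_k\, g(e^{A_k}).
\]
The $Y_i$ are i.i.d.\ with $\E[Y_1]=0$ (because $s$ is the almost sure Hausdorff dimension and $\fS_\lambda^s = \cN_\lambda c_\lambda^s$ in the equicontractive case) and with strictly positive variance $v$, since the system is not almost deterministic.

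The heart of the proof is the following random-walk zero-one lemma: for any deterministic sequence $(h_k^*)$ and any i.i.d.\ random walk $T_k$ with non-degenerate increments, $\liminf_k(h_k^* + T_k) \in \{-\infty, +\infty\}$ almost surely. I would prove it by noting that, setting $R := \liminf_k(h_k^*+T_k)$ and $S_m := Y_1 + \dots + Y_m$, the identity $R - S_m = \liminf_k(h_k^* + Y_{m+1}+\dots+Y_k)$ shows $\{R \in \R\}$ is measurable with respect to $\sigma(Y_{m+1},Y_{m+2},\dots)$ for every $m$, hence is a tail event. Kolmogorov's $0$--$1$ law gives $\Prob(R \in \R) \in \{0,1\}$; supposing it is $1$, I decompose $R = S_m + R_m$ with $R_m \perp S_m$ and pass to characteristic functions to obtain $|\phi_R(t)| \le |\phi_{Y_1}(t)|^m$ for every $m \ge 1$. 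Since $Y_1$ is non-degenerate, $|\phi_{Y_1}(t)|<1$ on an open dense subset of $\R$, forcing $\phi_R$ to vanish there; continuity of $\phi_R$ together with $\phi_R(0)=1$ yields a contradiction.

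The final step is to replace the random term $A_k\, g(e^{A_k})$ by the deterministic $h_k^* := km_a\, g(e^{km_a})$, where $m_a = \E[\log c_{\omega_1}]<0$. The SLLN and LIL give $A_k = km_a + O(\sqrt{k\log\log k})$ almost surely, and combining this with the monotonicity and vanishing of $g$ yields $|A_k\, g(e^{A_k}) - h_k^*| = o(\sqrt{k\log\log k})$ almost surely. Because the LIL fluctuations of $T_k$ around $0$ are of precisely order $\sqrt{2vk\log\log k}$ and oscillate between $\pm\infty$, this error is dominated, and the dichotomy from the lemma transfers to $\liminf_k \log\Sigma_k \in \{-\infty,+\infty\}$, giving $\Haus^h(F_\omega) \in \{0,\infty\}$ almost surely. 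The delicate step is this final approximation: without the WLOG monotonicity of $g$ the error could in principle compete with the LIL fluctuations of $T_k$ and destroy the transfer of the dichotomy; the zero-one lemma itself is clean once the tail structure is isolated.
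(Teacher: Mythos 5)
Your approach is genuinely different from the paper's: you reduce to a random-walk zero--one statement (proved via a tail-event argument and a characteristic-function factorisation), whereas the paper argues combinatorially by a surgery/level-set contradiction (Lemma~\ref{lma:levelsetsareequal}, the level sets $E_i$, the stopping times $\zeta_j$, and the insertion of a ``bad'' block from $B$ at $\zeta_j$ to build disjoint positive-probability events $\widehat{E}_{j_0}(i)$). Your zero--one lemma is correct as stated for a \emph{deterministic} sequence $(h_k^*)$: the decomposition $R=S_m+R_m$ with $R_m$ independent of $S_m$ and $|\phi_R|\le|\phi_{Y_1}|^m$ for all $m$ does force $\phi_R$ to vanish on a punctured neighbourhood of $0$, contradicting continuity at $0$ when $Y_1$ is non-degenerate.

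The gap is in transporting this lemma to the quantity you actually need to control. You are applying the lemma to the sum $T_k + A_kg(e^{A_k})$, but $A_kg(e^{A_k})$ is not deterministic: it depends on the first $k$ coordinates of $\omega$, and (in the equicontractive setting) $A_k=\sum_i\log c_{\omega_i}$ is correlated with $T_k=\sum_i\log\fS^s_{\omega_i}$. Your characteristic-function factorisation therefore breaks down here: $L-T_m = \liminf_k\bigl[(T_k-T_m)+A_kg(e^{A_k})\bigr]$ still depends on $A_m$, which is \emph{not} independent of $T_m$, so $\phi_L\ne\phi_{T_m}\phi_{L-T_m}$. To bridge this you propose replacing $A_kg(e^{A_k})$ by the deterministic $h_k^*=km_a g(e^{km_a})$ with error $o(\sqrt{k\log\log k})$ a.s.; but this estimate is false for general admissible $g$. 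Monotonicity of $g$ (equivalently, the associated nonnegative, nonincreasing $\phi(x)=-g(e^{-x})$, $\phi\to 0$) does not give any modulus-of-continuity control. Concretely, take $\phi$ piecewise constant with jumps of size $\delta_j=j^{-2}$ at $x=2^j|m_a|$: this is monotone, tends to $0$, and yields a bona fide left-continuous nondecreasing gauge $h$. For $k=2^j$ one has $y_k=k|m_a|$ at the jump, and with probability bounded below (and infinitely often by Borel--Cantelli-type reasoning) the random $x_k=-A_k$ lands on the other side of the jump, giving $|\phi(x_k)-\phi(y_k)|\ge\delta_j$ and hence $y_k|\phi(x_k)-\phi(y_k)|\ge 2^j|m_a|\,j^{-2}\gg \sqrt{2^j\log\log 2^j}$. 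So the claimed $o(\sqrt{k\log\log k})$ bound fails, and since the whole LIL-transfer step rests on it, the dichotomy does not carry over. (Even granting the estimate, the transfer of the $\liminf$ dichotomy across a perturbation $e_k$ that is $\omega$-measurable and merely $o(\sqrt{k\log\log k})$ is not automatic; you would need to rule out the error conspiring with the walk to stabilise the $\liminf$ at a finite value.) The paper's combinatorial proof avoids all of this by never touching the analytic structure of $h$; it only uses equation~(\ref{eq:gap}), the gap estimate~(\ref{eq:gapBound}), and an insertion argument that manufactures disjoint events of positive probability.
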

\begin{proof}
In light of Lemma~\ref{lma:levelsetsareequal} all that remains is to show that the right hand side of (\ref{eq:equalLevels}) is $0$ or $\infty$ almost surely, irrespective of $h$. Now $\{0<\Haus^h(F_\omega)<\infty\}$ is a tail-event and thus has probability $0$ or $1$ by Kolmogorov's Zero -- One Law. Therefore, we assume for a contradiction that 
\[
\Prob_H\Big\{ \omega\in\Omega \;\Big|\; \liminf_{k\to\infty}\sum_{e\in \mathbf{C}_{\omega}^k}h(c_e)\in(0,\infty)  \Big\}=1.
\]
Recall equation (\ref{eq:gap}), which implies that there exists positive probability $p_0$ such that
a letter $\lambda$ is picked that lowers the sum. Call the set of letters $B$. 
Using the assumption that $h(t)=t^{s+g(t)}$ for some decreasing $g(t)\leq0$, let $\epsilon>0$ be as given in (\ref{eq:gap}) and choose $t_0$ such that $g(t)\geq-\epsilon$ for all $0\leq t<t_0$. Then,
\begin{align}
\sup_{\lambda\in B}\sum_{f_\lambda^i\in\mathbb{I}_\lambda}h(c_\lambda\cdot t)&=
\sup_{\lambda\in B}\sum_{f_\lambda^i\in\mathbb{I}_\lambda}(c_\lambda\cdot t)^{s+g(c_\lambda\cdot t)}\nonumber\\
&\leq \sup_{\lambda\in B}\sum_{f_\lambda^i\in\mathbb{I}_\lambda}(c_\lambda\cdot t)^{s+g(t)}\nonumber\\
&\leq h(t)\;\sup_{\lambda\in B}\sum_{f_\lambda^i\in\mathbb{I}_\lambda}(c_\lambda)^{s-\epsilon}\leq \gamma \,h(t)\label{eq:gapBound}
\end{align}
for some $0<\gamma<1$ as given in (\ref{eq:gap}).

For $i\in\Z$, we define level sets $E_i$ by
\[
E_i := \Big\{ \omega\in\Omega \;\Big|\; \liminf_{k\to\infty}\sum_{e\in \mathbf{C}_{\omega}^k}h(c_e)\in(\gamma^{i+1},\gamma^i\,]  \Big\}
\]
Clearly, $\bigcup_{i\in\Z} E_i$ is a disjoint union with full measure and there exists $j_0\in\Z$
such that $\Prob_H(E_{j_0})>0$. Since $c_{\max}<1$, there exists $k_0$ such that $c_e<t_0$ for all
$e\in\mathbf{C}_\omega^k$, where $k>k_0$. For $\omega\in\bigcup_{i\in\Z} E_i$ we write
$\zeta_k(\omega)$ for the $k$-th time of being close to the lower limit, \ie
\[
\zeta_1(\omega)=\min\Big\{k>k_0 \;\Big|\; \sum_{e\in \mathbf{C}_{\omega}^k}h(c_e)\leq \gamma^i\text{ and }\sum_{e\in \mathbf{C}_{\omega}^l}h(c_e)>\gamma^{i+1}, (\forall l\geq k), \text{ where }\omega\in E_i \Big\}
\]
and
\begin{multline}
\zeta_j(\omega)=\min\Big\{k> \zeta_{j-1}(\omega) \;\Big|\; \sum_{e\in \mathbf{C}_{\omega}^k}h(c_e)\leq \gamma^i\text{ and }\\\sum_{e\in \mathbf{C}_{\omega}^l}h(c_e)>\gamma^{i+1}, (\forall l\geq k), \text{ where }\omega\in E_i \Big\}\nonumber
\end{multline}
Recall that positive and finite Hausdorff measure is a tail event. Thus, inserting a letter almost
never changes the Hausdorff measure from a positive and finite value to one in $\{0,\infty\}$. We
therefore assume, without loss of generality, that the insertion of a letter does not change the
word belonging to the tail event $\{0<\Haus^h(F_\omega)<\infty\}$.

Now consider $\widehat{E}_{j_0}(i)$, where
\begin{multline}
\widehat{E}_{j_0}(i)=\{\omega\in \Omega \mid \exists \omega'\in E_{j_0} \text{ such that } \omega_k = \omega'_k \text{ for }1\leq k\leq \zeta_j(\omega'), \\\text{ and }\omega_{\zeta_j(\omega')+1}\in B,\text{ and }\omega_{k+1} = \omega'_k \text{ for } k> \zeta_j(\omega') \}\nonumber.
\end{multline}
It is immediate that $\Prob_H(\widehat{E}_{j_0}(i))=p_0 \,\Prob_H(E_{j_0})>0$ and $\widehat{E}_{j_0}(i)\subset \bigcup_{k<j_0}E_k$. Further, for every $\omega\in E_i$, the letters at positions $\zeta_j(\omega)$ cannot be followed by a letter $b\in B$. This is because if such a letter was at this position, by equation (\ref{eq:gapBound}), $\omega\in E_l$ for some $l<i$, a contradiction to the initial hypothesis.
Since $\omega_{\zeta_j(\omega)+1}\notin B$ we must also have $E_{j_0}(i)\cap
E_{j_0}(i')=\varnothing$ for all $i\neq i'$, as otherwise it would contradict the ordering of
\emph{exceptional} positions.
But then
\[ 1=\Prob_H \left(\bigcup_{i<j_0}E_i\right) \geq \Prob_H\left(\bigcup_{i\in\N}\widehat{E}_{j_0}(i)\right)
=\sum_{i\in\N}\Prob_H(\widehat{E}_{j_0}(i))=\sum_{i\in\N} p_0\, \Prob_H(E_{j_0})=\infty, \]
a contradiction. Therefore  
\[
\Prob_H\Big\{ \omega\in\Omega \;\Big|\; \liminf_{k\to\infty}\sum_{e\in \mathbf{C}_{\omega}^k}h(c_e)\in(0,\infty)  \Big\}=0,
\]
proving our main statement.
\end{proof}

\clearpage
\subsection{Gauge functions for random code-trees with necks}\label{sect:generalcase}

The proof for equicontractive systems is fairly straightforward and its generalisation above relied to a heavy degree on the comparability of the evaluation over level sets with the Hausdorff measure. Allowing different contraction rates, this no longer holds for all $\omega$.
Our proof for the general case will imply that they are still comparable almost surely, but we do
not know this \emph{a priori}.
Instead, we will consider splitting up the tree and assigning values to each node corresponding to the minima achieved below it. We will then adopt the method above to obtain a similar contradiction to the positivity and finiteness assumption.

We start by extending the \emph{almost deterministic} condition to random code-trees with necks.

\begin{defn}
Let $\mathbb{L}$ be a family of IFSs that satisfy the UOSC and Condition~\ref{cond:boundedBelow}. Further, let $\Prob$ be a code-tree measure (see Definition~\ref{defn:codetreemeasure}). We call the random code-tree fractal associated with $\Prob$ \emph{almost deterministic} if
\[
\Prob\Big\{ \tau\in\mathcal{T} \;\Big|\; \sum_{e\in\mathbf{T}_\tau^{N_1(\tau)}}(c_e)^s = 1 \Big\} = 1,
\]
where $s$ is the almost sure Hausdorff dimension of $F_\tau$.
\end{defn}

Since $N_1(\tau)=1$ for all random homogeneous attractors, these definitions coincide. 
Note, as in the equicontractive random homogeneous case, that if the random code-tree fractal is not
almost deterministic, there exist $0<p_0, \epsilon,\gamma<1$ such that 
\[
\Prob\Big\{ \tau\in\mathcal{T} \;\Big|\; \sum_{e\in\mathbf{T}_\tau^{N_1(\tau)}}(c_e)^{s-\epsilon} \leq \gamma \Big\} = p_0,
\]
where $s$ is again the almost sure Hausdorff dimension of $F_\tau$. This allows us to establish a similar drop in the gauge function.
\begin{lma}\label{lma:neckgaps}
Let $\mathbb{L}$ be a family of IFS that satisfy the UOSC and Condition~\ref{cond:boundedBelow}. Let $\Prob$ be a random code-tree measure and assume that $F_\tau$ is not almost deterministic. Further, let $h(t)$ be any gauge function. Then there exist $B\subset \mathcal{T}$ and $t_0\in(0,1)$ such that $\Prob(B)=p_0>0$ and for all $0<t\leq t_0$,
\[
\sup_{\tau\in B}\sum_{e\in\mathbf{T}_\tau^{N_1(\tau)}}h(t\cdot c_e)\leq \gamma\, h(t).
\]
\end{lma}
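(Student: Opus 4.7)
The plan is to mimic the equicontractive gap bound (equation (\ref{eq:gapBound})) verbatim, summing over the first-neck block $\mathbf{T}_\tau^{N_1(\tau)}$ in place of a single IFS. Since $(\mathbb{L},\mu)$ is assumed not almost deterministic in the code-tree sense, the discussion immediately preceding the lemma furnishes constants $0 < p_0,\epsilon,\gamma < 1$, and the natural candidate for $B$ is precisely the exceptional set witnessing that hypothesis,
\[
B := \Big\{\tau \in \mathcal{T} \;\Big|\; \sum_{e\in\mathbf{T}_\tau^{N_1(\tau)}}(c_e)^{s-\epsilon} \leq \gamma\Big\},
\]
which has $\Prob(B) = p_0$ by definition.

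Next I would invoke the normalisation of the gauge established at the beginning of Section~\ref{sect:nonExistenceMain}: write $h(t) = t^{s+g(t)}$ with $g \leq 0$, $g$ monotone, and $g(t) \nearrow 0$ as $t \to 0^+$. Pick $t_0 \in (0,1)$ small enough that $g(t_0) \geq -\epsilon$; monotonicity then gives $g(u) \geq -\epsilon$ for every $0 < u \leq t_0$. Because each $c_e \leq c_{\max}^{N_1(\tau)} < 1$, for $0 < t \leq t_0$ one has $t \cdot c_e \leq t \leq t_0$, and hence the sharper inequality $g(t \cdot c_e) \geq g(t) \geq -\epsilon$ as well.

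With these two inequalities in hand, the bound follows from a short chain exactly as in (\ref{eq:gapBound}): first replace $s + g(t \cdot c_e)$ by the smaller $s + g(t)$ in the exponent (valid because the base $t \cdot c_e < 1$, so a smaller exponent yields a larger value); then factor out $h(t) = t^{s+g(t)}$; and finally replace $s + g(t)$ by the smaller $s - \epsilon$ to activate the defining property of $B$ and pick up the factor $\gamma$. Taking the supremum over $\tau \in B$ delivers the stated conclusion.

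I do not expect any substantive obstacle: the lemma is a direct transcription of the equicontractive gap estimate, with the code-tree non-almost-deterministic hypothesis playing the role formerly played by (\ref{eq:gap}). The only care required is to keep the direction of each exponential inequality straight for bases in $(0,1)$, and to observe that the monotonicity of $g$ allows the single threshold $t_0$ to work simultaneously for every $c_e$ in the first-neck block rather than just a single contraction ratio --- a point that follows automatically since $c_e \leq c_{\max}^{N_1(\tau)} < 1$ for all codings in $\mathbf{T}_\tau^{N_1(\tau)}$.
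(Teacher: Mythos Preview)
Your proposal is correct and is essentially the same approach the paper intends: the paper omits the proof entirely, stating only that it is ``almost identical to the first part of the proof of Theorem~\ref{thm:mainequicontractive}'', which is precisely the computation in (\ref{eq:gapBound}) that you reproduce with $\mathbf{T}_\tau^{N_1(\tau)}$ in place of a single IFS.
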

The proof is almost identical to the first part of the proof of Theorem~\ref{thm:mainequicontractive} and omitted here.
If we were able to prove that 
\[
\Haus^h(F_\tau)\asymp \liminf_{k\to\infty} \sum_{e\in\mathbf{T}_\tau^{N_k(\tau)}}h(c_e),
\]
we could use the same strategy as in Theorem~\ref{thm:mainequicontractive}. However, it seems unlikely
to hold in generality as we have no control over the behaviour in between neck levels. We shall use
a different, yet in same ways similar strategy to prove our main theorem.
\begin{theo}[Main Theorem]\label{thm:mainTheo}
Let $\mathbb{L}$ be a family of IFS that satisfy the UOSC and Condition~\ref{cond:boundedBelow}. Let $\Prob$ be a random code-tree measure and assume that $F_\tau$ is not almost deterministic. Further, let $h(t)$ be any gauge function. Then,
\[
\Prob\{ \tau\in\mathcal{T} \mid \Haus^h (F_{\tau})\in\{0,\infty\} \}=1.
\]
In particular, there does not exist a gauge function that gives positive and finite measure almost
surely.
\end{theo}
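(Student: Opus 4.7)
The plan is to follow the template of Theorem~\ref{thm:mainequicontractive}: apply a zero--one law to reduce to the contradiction hypothesis, partition $\mathcal{T}$ into dyadic level sets based on $\Haus^h(F_\tau)$, and splice ``bad blocks'' from Lemma~\ref{lma:neckgaps} at successive neck levels to produce infinitely many pairwise disjoint subsets of uniformly positive probability, contradicting $\Prob(\mathcal{T}) = 1$. The event $\{\tau : 0 < \Haus^h(F_\tau) < \infty\}$ is $\Pi$-invariant and, by the ergodicity of the neck shift (remark after Definition~\ref{defn:codetreemeasure}), it has probability $0$ or $1$; I would assume the latter for contradiction.

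Fix $\gamma, p_0, t_0$ and $B \subset \mathcal{T}$ as supplied by Lemma~\ref{lma:neckgaps}, and partition (modulo a null set) $\mathcal{T}$ into the level sets $E_i = \{\tau : \Haus^h(F_\tau) \in (\gamma^{i+1}, \gamma^i]\}$, $i \in \Z$; select $j_0$ with $\Prob(E_{j_0}) > 0$. For each $\tau \in E_{j_0}$ define an increasing sequence of exceptional neck indices $\zeta_j(\tau)$ by taking $\zeta_j(\tau)$ to be the smallest $k > \zeta_{j-1}(\tau)$ with all cylinder diameters at neck level $N_k(\tau)$ below $t_0$ and with the block of $\tau$ immediately after neck $N_k$ \emph{not} in $B$; infinitely many such $k$ exist almost surely by \eqref{eq:independence} since blocks lie outside $B$ independently with probability $1 - p_0 > 0$. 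For each $j \in \N$, let $\widehat{E}_{j_0}(j)$ be the image of $E_{j_0}$ under the splicing that inserts a tree drawn from $B$ immediately after the $\zeta_j(\tau)$-th neck of $\tau$. The independence property~\eqref{eq:independence} gives $\Prob(\widehat{E}_{j_0}(j)) = p_0 \Prob(E_{j_0})$, and the $\widehat{E}_{j_0}(j)$ are pairwise disjoint exactly as in Theorem~\ref{thm:mainequicontractive}: from any spliced tree, the position of the unique ``extra'' $B$-block (flagged by the non-$B$ status of the following block in the originating $\tau$) uniquely recovers both $\tau$ and $j$.

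The critical step is the splicing estimate $\Haus^h(F_{\widehat\tau}) \leq \gamma\,\Haus^h(F_\tau)$, placing $\widehat{E}_{j_0}(j) \subseteq \bigcup_{i > j_0} E_i$. Given any cover of $F_\tau$ by sets of diameter at most $t_0$, the UOSC-based multiplicity bound Lemma~\ref{upperasslem1} allows refinement---at the cost of the universal constant $(4/c_{\min})^d$---to a cylinder cover $\{\overline{f_e(\mathcal{O})}\}$ with codings $e$ deep enough to sit past neck level $N_{\zeta_j}(\tau)$. Splitting $e = e_1 e_2$ with $e_1 \in \mathbf{T}_\tau^{N_{\zeta_j}(\tau)}$, neck homogeneity makes the continuation $e_2$ a valid coding in the common subtree $\tau_*$; the spliced tree $\widehat\tau$ is then covered by $\{\overline{f_{e_1}\circ f_{e'}\circ f_{e_2}(\mathcal{O})}\}$, where $e'$ runs over the first-neck codings of the inserted $B$-block. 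Lemma~\ref{lma:neckgaps} supplies $\sum_{e'} h(c_{e_1}c_{e'}c_{e_2}) \leq \gamma\, h(c_{e_1}c_{e_2})$ as soon as $c_{e_1}c_{e_2} \leq t_0$, and summing over $e_1, e_2$ and taking the infimum over covers yields the estimate. Therefore $1 \geq \Prob(\bigcup_j \widehat{E}_{j_0}(j)) = \sum_j p_0\, \Prob(E_{j_0}) = \infty$, the desired contradiction.

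The main obstacle, absent from the equicontractive case, is the splicing estimate itself: there Lemma~\ref{lma:levelsetsareequal} gives the clean identity $\Haus^h(F_\omega) \asymp \liminf_k \sum_{e \in \mathbf{C}_\omega^k} h(c_e)$, reducing everything to level sums, whereas here arbitrary efficient covers of $F_\tau$ need not descend from any single neck level. The remedy is the cover refinement sketched above---UOSC together with Lemma~\ref{upperasslem1} gives a bounded-multiplicity passage to cylinder covers deep enough to sit past the relevant neck---after which Lemma~\ref{lma:neckgaps} applies verbatim and the combinatorial counting argument of Theorem~\ref{thm:mainequicontractive} goes through without further change.
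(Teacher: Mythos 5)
Your overall architecture (zero--one law $\to$ level sets $E_i$ $\to$ splice a $B$-block at exceptional neck positions $\to$ counting contradiction) matches the paper, but the two steps that carry the real content --- the splicing estimate and the disjointness of the $\widehat{E}_{j_0}(j)$ --- are each handled differently from the paper and each has a gap.

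\textbf{The drop estimate.} You bypass the paper's minimal-section machinery and work with arbitrary covers, refining to cylinder covers via Lemma~\ref{upperasslem1} at cost $(4/c_{\min})^d$, then splicing to get the factor $\gamma$ from Lemma~\ref{lma:neckgaps}. The estimate you actually obtain is $\Haus^h(F_{\widehat\tau})\leq (4/c_{\min})^d\,\gamma\,\Haus^h(F_\tau)$, and $(4/c_{\min})^d\gamma$ can easily exceed $1$: the constant $\gamma<1$ produced by Lemma~\ref{lma:neckgaps} is \emph{fixed} by the random IFS and cannot be made small, while $(4/c_{\min})^d$ is typically large. So the spliced tree need not land in $\bigcup_{i>j_0}E_i$. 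This is repairable (insert $m$ consecutive $B$-blocks so that $(4/c_{\min})^d\gamma^m<1$; the probability of the $m$-fold splice is $p_0^m\Prob(E_{j_0})>0$, still summing to $\infty$), but you do not address it. The paper sidesteps the constant entirely by introducing the section-sum process $\mathbf{S}_\tau^k$ built from infima over minimal sections, proving $\mathbf{S}_\tau^k\to\kappa_\tau\Haus^h(F_\tau)$, and defining the level sets in terms of $\mathbf{S}_\tau^k$ directly so that no auxiliary covering constant appears in the drop.

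\textbf{Disjointness.} This is the genuine gap. Your exceptional positions $\zeta_j(\tau)$ are defined purely by the \emph{static} pattern of which neck-blocks lie in $B$: the $j$-th position past $k_0$ whose following block is not in $B$. After splicing, the extra $B$-block is not ``flagged'' in any recoverable way, because $\tau$ itself may already contain $B$-blocks at other positions with non-$B$ successors. Concretely, suppose $\widehat\tau$ has the block pattern $\dots,A,X,A,X,A,A,\dots$ past $k_0$ (with $A=$ not in $B$, $X=$ in $B$). Removing the first $X$ produces $\tau^{(1)}=\dots,A,A,X,A,A,\dots$ with $\zeta_1(\tau^{(1)})$ at the first position; removing the second $X$ produces $\tau^{(2)}=\dots,A,X,A,A,A,\dots$ with $\zeta_2(\tau^{(2)})$ at the third position. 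Both removals give valid candidate preimages, nothing in your argument forces at most one of $\tau^{(1)},\tau^{(2)}$ to lie in $E_{j_0}$, and if both do then $\widehat\tau\in\widehat{E}_{j_0}(1)\cap\widehat{E}_{j_0}(2)$, breaking the identity $\Prob(\bigcup_j\widehat{E}_{j_0}(j))=\sum_j\Prob(\widehat{E}_{j_0}(j))$. The paper avoids this by defining $\zeta_j(\tau)$ \emph{dynamically}, as the $j$-th neck level at which $\mathbf{S}_\tau^{N_k}<\mathbf{S}_\tau^{N_{k+1}}$; such a ``jump'' position automatically cannot be followed by a $B$-block (a $B$-block would shrink, not increase, the section sum), and the splice is then tied to an intrinsic feature of the realisation rather than a letter pattern. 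This dynamical definition forces the two-case analysis you omit: if $\Prob(E)=0$ (the sum $\mathbf{S}_\tau^k$ already attains its limit after some $k_1(\tau)$) there are only finitely many jumps, and the paper derives a separate contradiction in that case by showing there could then be only finitely many $B$-blocks past $k_1(\tau)$, contradicting independence. Your one-case argument looks cleaner, but the price is the loss of disjointness.

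In short: the splicing-at-necks idea and the level-set/counting skeleton are as in the paper, and the drop estimate is fixable, but the injectivity of the splicing map --- which is what makes the probabilities sum to $\infty$ --- is not established by the static $\zeta_j$ you chose, and this is precisely what the paper's minimal-section machinery and two-case split are engineered to provide.
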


Before we give the proof we recall the notion of \emph{sections} and \emph{minimal sections}.
\begin{defn}
Let $\Sigma^*$ be the $\cN$-ary tree. A finite subset $M\subset \Sigma^*$ is called a \emph{section} if every long enough node $v\in\Sigma^*$ has an initial word in $M$. That is, there exists some $l_0$ such that for every $l\geq l_0$ and every $v\in\Sigma_l$ there exists $w\in M$ of length $l_w$ such that $v_i = w_i$ for all $1\leq i \leq l_w$.
A section $M$ is referred to as a \emph{minimal section} if no proper subset of $M$ is a section.
\end{defn}
\begin{proof}[Proof of Theorem~\ref{thm:mainTheo}]
Recall that $\mathcal{O}$ is the set guaranteed by the UOSC. We start by assuming, for a
contradiction, that with positive probability $0<\Haus^h(F_\tau)<\infty$. Since we are again dealing
with a tail event, we can further assume that $\Prob\{0<\Haus^h(F_\tau)<\infty\}=1$ by Kolmogorov's
Zero -- One Law.
Instead of relating the Hausdorff measure to level sets, we will relate them to tree sections.

For any section $M\subset\Sigma^*$,
\[
  F_\tau\subseteq \bigcup_{v\in M} \overline{f_{\mathbf{T}_\tau(v)}(\mathcal{O})}
\]
and so 
\[
  \Haus^h(F_\tau)\leq \sum_{v\in M}
  h(\lvert\overline{f_{\mathbf{T}_\tau(v)}(\mathcal{O})}\rvert)=\sum_{v\in M}
  h(c_{\mathbf{T}_\tau(v)}\cdot\lvert\mathcal{O}\rvert)\leq \kappa_\tau \sum_{v\in M}
  h(c_{\mathbf{T}_\tau(v)}),
\]
for some $\kappa_\tau>0$. In particular this holds for all minimal sections, and we will use these to obtain a lower bound to the Hausdorff measure. 
Let $M_k$ be a minimal section such that every $v\in M_k$ has length at least $k$ and let $\mathbf{M}_k$ be the set of all such minimal sections. Assuming the uniform strong separation condition, \ie $f_{\lambda_1}(\Delta)\cap f_{\lambda_2}(\Delta)\neq\varnothing$ implies $\lambda_1=\lambda_2$, where $\lambda_1,\lambda_2\in \Lambda$ and $\Delta$ is as in Definition~\ref{infinityDef2}, it is easy to show that
\[
  \Haus^h(F_\tau)=\lim_{k\to\infty} \; \inf_{M_k\in\mathbf{M}_k} \sum_{v\in M} h(c_{\mathbf{T}_\tau(v)}\cdot\lvert F_\tau \rvert).
\]
We refer the reader to Furstenberg~\cite{FurstenbergBook14} which provides a proof for $h(t)=t^s$.
The general gauge function case is identical and left to the reader. First, notice that by similar
reasons to Theorem~\ref{thm:mainequicontractive} we can, without loss of generality, assume
\[
  \lim_{k\to\infty} \; \inf_{M_k\in\mathbf{M}_k} \sum_{v\in M} h(c_{\mathbf{T}_\tau(v)}\cdot\lvert F_\tau
  \rvert)\asymp \lim_{k\to\infty} \; \inf_{M_k\in\mathbf{M}_k} \sum_{v\in M} h(c_{\mathbf{T}_\tau(v)}).
\] 
However, we are interested in the case where $\mathbb{L}$ satisfies the UOSC rather than the uniform
strong separation condition. To this end, we can modify our sections to only allow elements that do not
overlap, thus artificially forcing strong separation. That is,
\begin{multline}
\Haus^h(F_\tau)\geq \kappa_\tau \lim_{k\to\infty} \; \inf_{M_k\in\mathbf{M}_k} \Big\{
  \sup_{\widehat{M}\subseteq M_k} \Big\{ \sum_{v\in \widehat{M}} h(c_{\mathbf{T}_\tau(v)}) \;\Big|\;
  \overline{f_{\mathbf{T}_\tau(v)}(\mathcal{O})} \cap \overline{f_{\mathbf{T}_\tau(w)}(\mathcal{O})}=\varnothing, \\\text{
  for distinct }v,w\in \widehat{M}\Big\}\Big\}.\nonumber
\end{multline}
for some $\kappa_\tau$.
But, using Lemma~\ref{upperasslem1}, 
\clearpage
\begin{multline}
\lim_{k\to\infty} \; \inf_{M_k\in\mathbf{M}_k} \Big\{ \sup_{\widehat{M}\subseteq M_k} \Big\{
  \sum_{v\in \widehat{M}} h(c_{\mathbf{T}_\tau(v)}) \;\Big|\;  \overline{f_{\mathbf{T}_\tau(v)}(\mathcal{O})} \cap
  \overline{f_{\mathbf{T}_\tau(w)}(\mathcal{O})}=\varnothing,\\ \text{ for distinct }v,w\in
  \widehat{M}\Big\}\Big\}\nonumber
\end{multline}
\[
  \hspace{8cm}\geq  \left(\frac{c_{\min}}{4}\right)^d   \lim_{k\to\infty} \; \inf_{M_k\in\mathbf{M}_k} \sum_{v\in
M} h(c_{\mathbf{T}_\tau(v)}).\nonumber
\]
We conclude,
\[
  \Haus^h(F_\tau)\in (0,\infty) \qquad\text{if and only if}\qquad  \lim_{k\to\infty} \;
  \inf_{M_k\in\mathbf{M}_k} \sum_{v\in M} h(c_{\mathbf{T}_\tau(v)})\in (0,\infty).
\]
and redefine $\kappa_\tau$ such that
\[
\kappa_\tau \cdot \Haus^h(F_\tau) = \lim_{k\to\infty} \; \inf_{M_k\in\mathbf{M}_k} \sum_{v\in M}
h(c_{\mathbf{T}_\tau(v)}),
\]
setting $\kappa_\tau=1$ for all $\tau$ such that $\Haus^h(F_\tau)\in\{0,\infty\}$.

Instead of tree levels, we consider minimal subsections at neck levels. Let $\mathbf{M}^*=\bigcup M_k$ and set
\[
  S_\tau(t)=\inf_{M_k\in\mathbf{M}^*}\sum_{v\in M} h(t\cdot c_{\mathbf{T}_\tau(v)}).
\]
Clearly, 
\begin{align*}
  \lim_{k\to\infty}\sum_{v\in\Sigma_k}S_{\sigma^v\tau}(\mathbf{T}_\tau(v))&=\lim_{k\to\infty}\sum_{v\in\Sigma_{N_k(\tau)}}S_{\sigma^v\tau}(\mathbf{T}_\tau(v))\\
  &=\lim_{k\to\infty}
  \; \inf_{M_k\in\mathbf{M}_k} \sum_{v\in M} h(c_{\mathbf{T}_\tau(v)})=\kappa_\tau \Haus^h(F_\tau),
\end{align*}
as the leftmost sum is non-decreasing in $k$, for all $\tau\in\mathcal{T}$. So, our initial
assumption is also equivalent to $\mathbf{S}_\tau^k=\sum_{v\in\Sigma_k}S_{\sigma^v\tau}(\mathbf{T}_\tau(v))$ converging to some non-zero and finite value from below for almost every $\tau$. 

Let $\mathcal{T}'$ denote the full measure set for which there exist infinitely many necks. We
define
\[E=\{\tau\in\mathcal{T}' \mid \mathbf{S}_\tau^k< \kappa_\tau \Haus^h(F_\tau)\text{ for all
}k\}\] to be the set of realisations that converge properly to its limit. First, we assume that
$\Prob(E)>0$ to derive a contradiction in a similar spirit to the homogeneous case.
Let $k_0$ be such that $c_{\mathbf{T}_\tau(v)}<t_0$ for all $v\in\bigcup_{k\geq k_0}\Sigma_k$. For $i\in\Z$, we analogously define level sets $E_i$ by
\[
E_i := \Big\{ \tau\in\mathcal{T}' \;\Big|\; \lim_{k\to\infty}\mathbf{S}_\tau^k\in(\gamma^{i+1},\gamma^i\,]  \Big\},
\]
where $\gamma$ is given in Lemma~\ref{lma:neckgaps}.
Again, $E^*=\bigcup_{i\in\Z} E_i$ is a disjoint union with full measure and so $\Prob(E^*\cap E)>0$.
Thus there exists $j_0\in\Z$ such that $\Prob(E\cap E_{j_0})>0$. 

For $\tau\in E^*$ we write $\zeta_k(\tau)$ for the $k$-th neck level not less than $k_0$ such that a `jump' in value occurs at the next neck. \ie
\[
\zeta_1(\tau)=\min\Big\{k>k_0 \;\Big|\;  \mathbf{S}_\tau^{N_k(\tau)}<\mathbf{S}_\tau^{N_{k+1}(\tau)}  \Big\}
\]
and
\[
\zeta_j(\tau)=\min\Big\{k> \zeta_{j-1}(\omega) \;\Big|\; \mathbf{S}_\tau^{N_k(\tau)}<\mathbf{S}_\tau^{N_{k+1}(\tau)}\Big\}
\]
First note that for a given $\tau\in E^*$, there must be some $v\in\Sigma^*$ of length
$N_k(\tau)\leq \lvert v\rvert<N_{k+1}(\tau)$ such that
$S_{\sigma^v\tau}(\mathbf{T}_\tau(v))=h(\mathbf{T}_\tau(v))$ as
otherwise the value would not jump. Further, this jump implies that the shift
$\sigma^{v_k}\tau\notin B$ for $v_k=(1,1,\dots,1)\in\Sigma_{N_k}$. Thus we can form a new set
by inserting a new neck block at this jump value that decreases the Hausdorff measure and set
\begin{multline*}
\widehat{E}_{j_0}(i) = \Big\{ \tau\in\mathcal{T}' \mid  \exists b\in B,\exists \tau'\in E_{j_0} \text{ such that } \tau(v)=\tau'(v)\text{ for all }v\in\bigcup_{k=1}^{N_{\zeta_i}(\tau')-1}\Sigma_k,\\*
\text{ and }\sigma^{v}\tau=b\text{ for all }v\in\Sigma_{N_{\zeta_i}(\tau')}\text{ such that }\mathbf{T}(v)\neq\emptyset,\text{ and }\sigma^v\tau=\sigma^{v_{k+1}}\tau'\\*
\text{ for all }v\in\Sigma_{N_{\zeta_i}(\tau')+N_{1}(b)}\text{ such that }\mathbf{T}(v)\neq\emptyset\Big\}.
\end{multline*}
Now $\Prob$ is invariant with respect to neck-shifts and in-between necks are
independent. We can therefore conclude that $\Prob(\widehat{E}_{j_0}(i))=p_0 \Prob(E_{j_0})>0$ and
obtain
\[ 1=\Prob \left(\bigcup_{i<j_0}E_i\right) \geq \Prob\left(\bigcup_{i\in\N}\widehat{E}_{j_0}(i)\right)
=\sum_{i\in\N}\Prob(\widehat{E}_{j_0}(i))=\sum_{i\in\N} p_0\, \Prob(E_{j_0})=\infty, \]
and so $\Prob(E)=0$. 

Since we have reached a contradiction, we must conclude that 
\[
K=\{\tau\in\mathcal{T}' \mid \exists k_1(\tau)>k_0 \text{ such that }\mathbf{S}_\tau^k=\kappa_\tau\Haus^h (F_\tau)\text{ for all }k\geq k_1(\tau)\}
\]
has full measure. We immediately conclude that sums of any section with word length at least
$k_1(\tau)$ give an upper bound.
Since neck levels are independent we can also conclude that $K\cap B\cap\mathcal{T}'$ has full
measure.

Let $S$ be all minimal sections such that all $v\in S$ have length greater than $k_1(\tau)$ and
\[
  \kappa_\tau\Haus^h(F_\tau)\leq\sum_{v\in S}h(\mathbf{T}_\tau(v))<\frac{\kappa_\tau}{\gamma}\Haus^h(F_\tau).
\]
There must exist $v_0\in S$ such that $S_{\sigma^{v_0}}(\mathbf{T}_\tau(v_0))\leq h(\mathbf{T}_\tau(v_0)) <
\gamma^{-1}S_{\sigma^{v_0}}(\mathbf{T}_\tau(v_0))$. Let $l_0$ be the length of $v_0$. If $l_0=N_k(\tau)$ for
some $k$, then the next neck cannot lie in $B$. Similarly, if $N_k(\tau)<l_0\leq N_{k+1}(\tau)$ we
can use the maximal contraction rate $c_{\max}$ and the maximal splitting $\cN$ to conclude that
there cannot be more than $n_0=(N_{k+1}-N_k)\log(\cN (c_{\max})^{s-\epsilon})/\log\gamma$ consecutive necks in $B$. However, under our assumptions, the value of
$S_{\sigma^{v_0}}(\mathbf{T}_\tau(v_0))$ does not depend on the order of the letters past $l_0$ and there cannot be more than $n_0$ occurrences of necks in $B$ after $k_1(\tau)$. Hence there are only finitely many necks shifts in $B$, a contradiction. 
We conclude that, almost surely, the $h$-Hausdorff measure is zero or infinite.
\end{proof}

\clearpage
\section{Exact packing measure}\label{sect:packingquestion}
The packing measure can be considered the dual of the Hausdorff measure. For arbitrary gauge
functions we define it thus.
\begin{defn}
 Let $F\subseteq\R^d$ and $h(t)$ be a gauge function. Define
 \begin{multline*}
   \mathscr{P}_\delta^h (F)=\sup\Big\{ \sum_i h(\lvert B_i\rvert) \mid \left\{ B_i \right\}
   \text{is a countable collection of disjoint balls }\\ \text{centred in $F$ with radii }
 r_i\leq\delta \Big\}
 \end{multline*}
 and set $\mathscr{P}_0^h(F)=\lim_{\delta\to\infty}\mathscr{P}_\delta^h (F)$. The \emph{packing
 measure} is
 \begin{equation}
   \mathscr{P}^h (F) = \inf\left\{ \sum_{i=1}^\infty \mathscr{P}_0^s (F_i) \mid \text{ where
   }f\subseteq \bigcup_{i=1}^\infty F_i \right\}.\label{eq:packingMeasure}
 \end{equation}
\end{defn} 
Which can easily be seen to be similar to the definition of the Hausdorff measure with one important
difference; we need to to take the second infimum (\ref{eq:packingMeasure}) to guarantee the measure
is countably stable.

We note that there are topological conditions that can help us avoid taking the second
infimum. Recall that $\dim_P(F)=\overline \dim_B (F)$ if $F$ is compact and $\overline \dim_B F\cap O =
\overline \dim_B F$ for every open set $O$ that intersects $F$ non-trivially,
see~\cite[Corollary 3.10]{FractalGeo3}. Similarly, we can prove the following Lemma.

\begin{lma}\label{lma:easierpacking}\index{packing measure}
Let $\mathbb{L}$ be a family of IFS and let $\Prob$ be a random code-tree measure with associated
attractor $F_\tau$.
Let $h(t)$ be a doubling gauge function and assume that all maps
$f_\lambda^i\in\mathbb{I}_\lambda\in\mathbb{L}$ are strict contractions such that there
exist $0<c_{\min}\leq c_{\max}<1$ such that 
\[c_{\min} \lvert x-y\rvert \leq\lvert f_\lambda^i (x)-f_\lambda^i(y)
\rvert \leq c_{\max} \lvert x-y\rvert
\]
for all $\lambda\in\Lambda$ and $i$ and all $x,y\in\R^d$. Let $\tau\in\mathcal{T}$,
then 
\begin{equation}
  \mathscr{P}^h_0(F_\tau)=\infty \quad\implies\quad \mathscr{P}^h(F_\tau)=\infty \label{eq:lowerBound}
\end{equation}
and
\begin{equation}
  \mathscr{P}^h_0(F_\tau)=0 \quad\implies\quad \mathscr{P}^h(F_\tau)=0.\label{eq:upperBound}
\end{equation}
\end{lma}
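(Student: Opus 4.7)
Implication (\ref{eq:upperBound}) I would handle first: using the one-element cover $F_\tau \subseteq F_\tau$ directly in the definition of $\mathscr{P}^h$ yields $\mathscr{P}^h(F_\tau)\leq \mathscr{P}^h_0(F_\tau)=0$.

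For (\ref{eq:lowerBound}) the plan is to argue by contradiction. Assume $\mathscr{P}^h_0(F_\tau)=\infty$ and $\mathscr{P}^h(F_\tau)<\infty$. Then, by definition, there exists a countable cover $F_\tau\subseteq\bigcup_i F_i$ with $\sum_i \mathscr{P}^h_0(F_i)<\infty$; in particular every $\mathscr{P}^h_0(F_i)$ is finite. Replacing $F_i$ by $F_i\cap F_\tau$ and then by its closure inside the compact set $F_\tau$ changes the pre-measure only by a bounded multiplicative factor: a packing of the closure is approximated by a packing of $F_i$ by nudging centres into $F_i$ and shrinking radii by an arbitrarily small amount, with the doubling bound $h(2t)\leq\lambda h(t)$ and the left-continuity of $h$ controlling the error. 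After this reduction I may assume every $F_i$ is a closed subset of $F_\tau$ with finite $\mathscr{P}^h_0$.

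Since $F_\tau$ is compact and $F_\tau=\bigcup_i F_i$ is a countable union of closed sets, the Baire category theorem provides an index $i_0$ and an open set $U\subseteq\R^d$ with $\varnothing\neq U\cap F_\tau\subseteq F_{i_0}$. This is where the self-similar-type structure of $F_\tau$ enters: for any $x\in U\cap F_\tau$ the uniform upper bound $c_{\max}<1$ on contraction ratios lets me pick, for every sufficiently large $k$, a node $v\in\Sigma_k$ with $x\in f_{\mathbf{T}_\tau(v)}(\Delta)\subseteq U$, so that $f_{\mathbf{T}_\tau(v)}(F_{\sigma^v\tau})\subseteq F_{i_0}$ and therefore
\[
\mathscr{P}^h_0(F_{i_0}) \;\geq\; \mathscr{P}^h_0\bigl(f_{\mathbf{T}_\tau(v)}(F_{\sigma^v\tau})\bigr).
\]
The doubling of $h$ together with the uniform contraction bounds yields a comparability $\mathscr{P}^h_0(f_{\mathbf{T}_\tau(w)}(F_{\sigma^w\tau}))\asymp \mathscr{P}^h_0(F_{\sigma^w\tau})$ with constants depending only on $c_w$; combined with finite subadditivity applied to the level-$k$ decomposition $F_\tau=\bigcup_{w\in\Sigma_k}f_{\mathbf{T}_\tau(w)}(F_{\sigma^w\tau})$, the assumption $\mathscr{P}^h_0(F_\tau)=\infty$ forces, at every level $k$, at least one cylinder piece to carry infinite pre-measure.

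The main obstacle is ensuring that the cylinder $v$ selected by Baire itself is one of these infinite-pre-measure cylinders: a priori the ``mass'' could be concentrated along a single descending chain that misses the neighbourhood $U\cap F_\tau$. I would resolve this by iterating the category argument: the union over $i$ of the relative interiors in $F_\tau$ of the closed sets $F_i$ is open and dense in $F_\tau$, so I can descend through the code-tree along an infinite chain $v_1\subsetneq v_2\subsetneq\cdots$ by selecting at each step a child of infinite pre-measure (guaranteed by finite subadditivity) while steering the chain into this dense open set. The cylinder diameters $c_{v_k}\to 0$ then force some $v_{k_0}$ to lie entirely inside some $F_{i_0}$; since $v_{k_0}$ has infinite pre-measure this contradicts $\mathscr{P}^h_0(F_{i_0})<\infty$ and establishes (\ref{eq:lowerBound}).
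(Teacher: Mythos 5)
Your handling of (\ref{eq:upperBound}) via the trivial one-element cover is correct, and your reduction to a countable cover by closed subsets of $F_\tau$ (passing to $\overline{F_i\cap F_\tau}$, which preserves $\mathscr{P}^h_0$ up to the doubling constant) followed by Baire category is exactly the right structural move for (\ref{eq:lowerBound}). The paper's own proof is extremely terse on this point — it baldly asserts a reduction to finite covers — and your Baire argument is an improvement in explicitness.

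However, you correctly identify the crucial obstacle and then do not resolve it. The ``steering'' step is not justified. At each level, finite subadditivity of $\mathscr{P}^h_0$ guarantees \emph{some} cylinder piece of infinite pre-measure, but it gives you no control over \emph{which} one. A priori, at every level there could be a unique infinite-mass child, and these could march straight into the closed nowhere-dense complement of $\bigcup_i \interior_{F_\tau}(F_i)$; the density of the good open set does nothing to prevent this, because density only guarantees that a cylinder \emph{intersects} the good set, not that an infinite-mass sub-cylinder lies inside it. So the chain you construct need not ever terminate inside some $F_{i_0}$, and the claimed contradiction does not follow.

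The missing ingredient is the neck structure, which is the whole reason the lemma is stated for random code-tree measures rather than arbitrary tree attractors. At a neck level $N_k(\tau)$, every surviving node $v\in\Sigma_{N_k(\tau)}$ satisfies $\sigma^v\tau=\sigma^{1_{N_k}}\tau$, so all cylinder pieces $f_{\mathbf{T}_\tau(v)}(F_{\sigma^v\tau})$ are similar copies of the single set $F_{\sigma^{1_{N_k}}\tau}$ with similarity ratios pinched between $c_{\min}^{N_k}$ and $c_{\max}^{N_k}$. Combined with the doubling of $h$ and your scaling comparability $\mathscr{P}^h_0(f(E))\asymp\mathscr{P}^h_0(E)$, this means all neck-level cylinder pieces have \emph{mutually} comparable $\mathscr{P}^h_0$. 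Finite subadditivity then forces every single one of them to have infinite pre-measure. With that in hand there is nothing to steer: the open piece $U\cap F_\tau$ given by Baire contains, for $k$ large, an entire neck-level cylinder (since diameters tend to zero uniformly and necks occur infinitely often for $\Prob$-a.e.\ $\tau$), and that cylinder has infinite $\mathscr{P}^h_0$, contradicting $\mathscr{P}^h_0(F_{i_0})<\infty$. Inserting this observation where you currently invoke ``steering'' closes the gap and brings your argument in line with the paper's.
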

Note that we did not make any assumption on the contractions and separation conditions in this
Lemma.
\begin{proof}
  Equation (\ref{eq:upperBound}) follows from the definition of $\mathscr{P}^h$ and it remains to
  show (\ref{eq:lowerBound}), \ie we need to show that
\[
\inf\left\{ \sum_{i=1}^\infty \mathscr{P}^h_0(E_i)   \mid  F_\tau\subseteq\bigcup_{i=1}^\infty E_i \right\} = \infty
\]
if $\mathscr{P}_0^h(F_\tau)=\infty$. 
Now $F_\tau$ is compact, and so we can assume that $\{E_i\}$ is finite. Thus there
exists $k$ and $j$ such that there exists $v_j\in\Sigma_{N_k(\tau)}$ with
$f_{\mathbf{T}_\tau(v_j)}(F_{\sigma^{v_j}\tau})\subset E_j$. So, for some $n$ dependent on the cover,
\begin{align*}
\mathscr{P}^h(F_\tau)&=\inf\left\{ \sum_{i=1}^n \mathscr{P}^h_0(E_i)   \;\Big|\; F_\tau\subseteq\bigcup_{i=1}^n E_i \right\}\\
&\geq \inf \left\{\mathscr{P}^h_0 (E_j)    \;\Big|\; F_\tau\subseteq\bigcup_{i=1}^n E_i \right\}&& \text{($j$ as above)} \\
&\geq \inf \left\{\mathscr{P}^h_0(f_{\mathbf{T}_\tau(v_j)}( F_{\sigma^{v_j} \tau}))   \;\Big|\; F_\tau\subseteq\bigcup_{i=1}^n E_i \right\}\\
&\geq \inf \left\{\lim_{\delta\to 0} \kappa \mathscr{P}_\delta^h(F_{\sigma^{v_j}\tau})
\;\Big|\; F_\tau\subseteq\bigcup_{i=1}^n E_i \right\}=\infty,&&\text{(a.s.)}
\end{align*}
where the infimum is taken over all finite covers and $\kappa$ is a finite constant arising from the
maximal distortion of the map $f(.)$ (bounded by $c_{\min}^{N_k}$ and $c_{\max}^{N_k}$) and the
doubling of $h$.
\end{proof}

\subsection{Bounds for equicontractive RIFS}
Inspired by the recent progress on the packing measure of random recursive attractors mentioned
above, we would hope that using the gauge $h_1(t,\beta,\gamma)$ should give similar similar
convergence and divergence, depending on the sign of $\gamma$. This can be achieved by considering
the natural dual to $h_1$. Let $s\geq0$, $\gamma\in\R$ and $\beta>0$, we set
\[
h_1^*(t,\beta,\gamma)=t^s \exp\left(-\sqrt{2\beta\log(1/t)\log\log(\beta\log(1/t))}\right)^{1-\gamma}.
\]
We remark that, in light of Lemma~\ref{lma:easierpacking}, we only sketch proofs.
\begin{theo}\label{thm:packinggauge}\index{packing measure}\index{uniform open set condition (UOSC)}
Let $F_{\omega}$ be the random homogeneous attractor associated to the self-similar\index{self-similar set} RIFS $(\mathbb{L},\mu)$ satisfying the UOSC  and suppose that $c_{\lambda}^{i}=c_{\lambda}\in[c_{\min},c_{\max}]$ for every $i\in\{1,\dots,\#\mathbb{I}_\lambda\}$ and $\lambda\in\Lambda$, where $0<c_{\min} \leq c_{\max}<1$.
Let $\epsilon>0$, $s=\ess\dim_H F_\omega=\ess\dim_P F_\omega$ and
$\beta_0^*=\eta_0\Var(\log\fS_{\omega_{1}}^{s})$ for some $\eta_0^*\in\R$ (arising in the proof).
Then $\mathscr{P}^{h^*_{1}(t,\beta^*_0,\epsilon)}(F_{\omega})=\infty$
almost surely.
\end{theo}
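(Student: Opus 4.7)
The plan is to mirror the lower-bound strategy of Theorem~\ref{thm:lowergauge}, but using the \emph{upper} branch of the Law of the Iterated Logarithm in place of the lower branch: where for Hausdorff measure one sought a subsequence along which $\sum_i \log \fS_{\omega_i}^s$ dips low (giving many small cylinders and a large cover weight), for packing measure one wants a subsequence along which this sum climbs high, so that at some level $k$ the cylinder count grossly exceeds $D_k^{-s}$ and one can exhibit a divergent disjoint packing. By Lemma~\ref{lma:easierpacking}, combined with the fact that $h_1^*(t,\beta_0^*,\epsilon)$ is doubling (verified by essentially the calculation of Lemma~\ref{lma:doubling}), it suffices to show $\mathscr{P}_0^{h_1^*(\cdot,\beta_0^*,\epsilon)}(F_\omega) = \infty$ almost surely.

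First I would apply Proposition~\ref{thm:LIL} to the centred i.i.d.\ random variables $Y_i = \log \fS_{\omega_i}^s$, which satisfy $\E Y_1 = 0$ (by definition of the almost sure Hausdorff dimension $s$) and $v := \Var(Y_1) \in (0,\infty)$ (under Condition~\ref{cond:boundedBelow} and the assumption that the system is not almost deterministic). The LIL then gives, almost surely,
\[
\sum_{i=1}^k Y_i \;\geq\; \left(1-\tfrac{\epsilon}{2}\right)\sqrt{2vk\log\log(vk)} \quad \text{for infinitely many } k.
\]
Writing $D_k = c_{\omega_1}\cdots c_{\omega_k}$, the uniform bounds on $c_\lambda$ let me pick a deterministic $\eta_0^* > 0$ (for instance $\eta_0^* = 1/\log(1/c_{\min})$) with $\eta_0^* \log(1/D_k) \leq k$ for every $\omega$ and every $k$. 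Setting $\beta_0^* = \eta_0^* v$ then gives
\[
2\beta_0^*\log(1/D_k)\log\log(\beta_0^*\log(1/D_k)) \;\leq\; 2vk\log\log(vk),
\]
so that along the LIL subsequence
\[
\sum_{i=1}^k Y_i - (1-\epsilon)\sqrt{2\beta_0^*\log(1/D_k)\log\log(\beta_0^*\log(1/D_k))} \;\geq\; \tfrac{\epsilon}{2}\sqrt{2vk\log\log(vk)} \;\longrightarrow\; \infty.
\]

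Second, I would construct the packing. At each such level $k$ and each $e \in \mathbf{C}_\omega^k$, the UOSC guarantees that $f_e(\mathcal{O})$ contains a ball of radius $\asymp D_k$, and the sets $f_e(\mathcal{O})$ are pairwise disjoint across $e$. Picking one such ball $B_e$ per cylinder with centre in $F_\omega$ (see below) produces a valid disjoint packing whose total gauge weight is, using the identity $\prod_i \cN_{\omega_i} \cdot D_k^s = \exp(\sum_i Y_i)$,
\[
\sum_{e \in \mathbf{C}_\omega^k} h_1^*(|B_e|,\beta_0^*,\epsilon) \;\asymp\; \exp\!\Bigl(\sum_{i=1}^k Y_i - (1-\epsilon)\sqrt{2\beta_0^*\log(1/D_k)\log\log(\beta_0^*\log(1/D_k))}\Bigr),
\]
which diverges along the LIL subsequence by the previous step. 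Letting $\delta \to 0$ yields $\mathscr{P}_0^{h_1^*}(F_\omega) = \infty$ almost surely.

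The hard part is the caveat about ball centres. Under only the UOSC (rather than the uniform strong separation condition), $F_\omega$ need not intersect each open cylinder $f_e(\mathcal{O})$, only its closure, so one cannot directly place a ball centred in $F_\omega$ strictly inside $f_e(\mathcal{O})$. The standard remedy, in the same spirit as the localisation behind Lemma~\ref{upperasslem1}, is to descend a uniform bounded number $M$ of extra levels inside each cylinder to reach a sub-cylinder that is contained in $f_e(\mathcal{O})$ and certainly meets $F_\omega$; the corresponding ball then has diameter at least of order $c_{\min}^M D_k$, losing only a uniform factor which by the doubling property of $h_1^*$ is absorbed into the implicit constant in the display above and does not affect the divergence.
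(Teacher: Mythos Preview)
Your proof is correct and follows essentially the same route as the paper: reduce to the premeasure via Lemma~\ref{lma:easierpacking} (using that $h_1^*$ is doubling), apply the upper branch of the LIL to $\sum_i \log\fS_{\omega_i}^s$, and exhibit one disjoint ball per level-$k$ cylinder to obtain a packing whose gauge-weight diverges along the LIL subsequence. The only variation is in the device used to centre the packing balls in $F_\omega$: the paper finds a word $e_{ch}$ of fixed length $l$ with $f_{e_{ch}}(\langle F_\omega\rangle)\subset\langle F_\omega\rangle$ (a convex-hull argument) and packs by the sets $f_{e\,e_{ch}}(F_{\sigma^{k+l}\omega})$, whereas you descend extra levels inside the UOSC open set; both tricks are sketched and cost only a uniform multiplicative constant absorbed by the doubling of $h_1^*$.
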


\begin{proof}
By Lemma~\ref{lma:easierpacking} we only have to analyse $\lim_{\delta\to0}\mathscr{P}_{\delta}^{h_1^*(t,\beta_0^*,\epsilon)}(F_\omega)$. 
Let $\langle X\rangle$ denote the compact convex hull of $X$. Since $c_\lambda$ is uniformly bounded
away from $0$ and $1$ and $\sup_{\lambda\in\Lambda}\#\mathbb{I}_\lambda<\infty$ there exist $l$
and there exists at least one $e_{ch}(\omega)\in\mathbf{C}_\omega^l$ for which we have
$f_{e_{ch}(\omega)}(\langle F_\omega \rangle)\subset \langle F_\omega \rangle$. Thus we get, in a similar fashion to the Hausdorff measure argument,
\begin{align*}
\lim_{\delta\to0}\mathscr{P}_{\delta}^{h_1^*(t,\beta_0^*,\epsilon)}(F_\omega)
&=\lim_{\delta\to0} \sup\Bigg\{ \sum_{i=1}^\infty h_1^*(2r_i,\beta_0^*, \epsilon)  \;\Big|\; \{B(x_i,r_i)\} \text{ is a disjoint }\\
&\hspace{2cm}\text{collection of balls with $2r_i<\delta$ and $x_i\in F_\omega$}  \Bigg\}\\ &\\
&\geq \limsup_{k\to\infty} \sum_{e\in\mathbf{C}_{\omega}^k} h_1^*(\lvert
f_{e\, e_{ch}(\sigma^{k} \omega)}(F_{\sigma^{k+l} \omega})\rvert,\beta_0^*, \epsilon)\\ 
&\geq \limsup_{k\to\infty} \left(\prod_{i=1}^k \cN_{\omega_i}\right)  h_1^*(c_{\omega_1}c_{\omega_2}\dots c_{\omega_k}c_{\min}^l,\beta_0^*,\epsilon)\\
&\geq  \limsup_{k\to\infty} \left(\prod_{i=1}^k \cN_{\omega_i}\right) \kappa
(c_{\omega_1}c_{\omega_2}\dots c_{\omega_k})^s \exp\bigg(-(1-\epsilon)\\
&\hspace{1cm}\cdot\sqrt{\beta_0^*\log(1/(c_{\omega_1}\dots c_{\omega_k})\log\log(\beta_0^*\log(1/(c_{\omega_1}\dots c_{\omega_k}))}\bigg)\\
&\geq \limsup_{k\to{\infty}}\kappa\exp\left( \sum_{i=1}^k \log\fS_{\omega_i}^s  -  (1-\epsilon)\sqrt{v k \log\log vk}\right)\\
&=\infty,
\end{align*}
writing $v=\Var(\fS_{\omega_1}^s)$ and having used the law of the iterated logarithm\index{law of the iterated logarithm} in the last step.
\end{proof}

Finally, we also obtain an upper bound.

\begin{theo}\label{thm:packinggaugeupper}\index{packing measure}
Let $F_{\omega}$ be the random homogeneous attractor associated to the self-similar RIFS $(\mathbb{L},\mu)$ satisfying the UOSC  and suppose that $c_{\lambda}^{i}=c_{\lambda}\in[c_{\min},c_{\max}]$ for every $i\in\{1,\dots,\#\mathbb{I}_\lambda\}$ and $\lambda\in\Lambda$, where $0<c_{\min} \leq c_{\max}<1$.
Let $\epsilon>0$, $s=\ess\dim_H F_\omega=\ess\dim_P F_\omega$ and
$\beta^*=\eta\Var(\log\fS_{\omega_{1}}^{s})$ for some $\eta^*\in\R$ (arising in the proof), then \[\mathscr{P}^{h^*_{1}(t,\beta^*,\epsilon)}(F_{\omega})=0\]
holds almost surely.
\end{theo}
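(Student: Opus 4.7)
The plan is to mirror the proof of Theorem 5.3 but to use the upper half of the law of the iterated logarithm (Proposition 3.3), and to replace the ``exhibit one good packing'' step by a \emph{uniform} bound over every admissible packing. By Lemma 5.2 it suffices to show that $\mathscr{P}_0^{h_1^*(t,\beta^*,\epsilon)}(F_\omega)=0$ almost surely; the requisite doubling of $h_1^*$ follows from an argument essentially identical to Lemma 3.1. Throughout I would normalise $\lvert\mathcal{O}\rvert=1$ and write $D_k(\omega)=c_{\omega_1}\cdots c_{\omega_k}$, $S_k(\omega)=\sum_{i=1}^k \log\fS_{\omega_i}^s$, and $v=\Var(\log\fS_{\omega_1}^s)$.

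The first ingredient is a uniform packing-number estimate. Since all maps inside each $\mathbb{I}_{\omega_i}$ share a common contraction, the set $\Xi_r(\omega)$ is exactly the tree level $\mathbf{C}_\omega^{k(r,\omega)}$, where $k(r,\omega)$ is the smallest integer with $D_{k(r,\omega)}(\omega)\leq r$. Lemma 2.5 then implies that at most $(4/c_{\min})^d$ centres of any disjoint packing by balls of radius $r$ in $F_\omega$ can lie in a single cylinder of $\Xi_r(\omega)$, whence
\[
N(r,\omega):=\max\bigl\{n:\,B(x_1,r),\dots,B(x_n,r)\text{ disjoint, } x_i\in F_\omega\bigr\}\leq (4/c_{\min})^d\prod_{i=1}^{k(r,\omega)}\cN_{\omega_i}.
\]
Since $D_k(\omega)\geq c_{\min}^k$ uniformly in $\omega$, one has $k(r,\omega)\to\infty$ uniformly as $r\to 0$.

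Given any $\delta>0$ and any disjoint packing $\{B(x_i,r_i)\}$ of $F_\omega$ with $r_i\leq\delta$, I would group the balls dyadically by $r_i\in(2^{-j-1},2^{-j}]$ and apply the packing-number bound at scale $2^{-j-1}$ together with doubling to obtain
\[
\sum_i h_1^*(2r_i,\beta^*,\epsilon)\leq C\sum_{j\geq J_0}\bigl(\prod_{i=1}^{k_j(\omega)}\cN_{\omega_i}\bigr)\,h_1^*\bigl(D_{k_j(\omega)}(\omega),\beta^*,\epsilon\bigr)=C\sum_{j\geq J_0}\exp\bigl(S_{k_j(\omega)}-(1-\epsilon)Y_{k_j(\omega)}\bigr),
\]
uniformly over the packing, where $J_0=\lceil\log_2(1/\delta)\rceil$, $k_j(\omega)=k(2^{-j-1},\omega)$, and $Y_k=\sqrt{2\beta^*\log(1/D_k)\log\log(\beta^*\log(1/D_k))}$; the final equality is the identity used in the proof of Theorem 3.2. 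Choose $\eta^*\in\R$ with $\eta^*(-\log c_{\max})>\bigl((1+\epsilon/2)/(1-\epsilon)\bigr)^2$ and set $\beta^*=\eta^*v$. Using $\log(1/D_k)\geq k(-\log c_{\max})$ deterministically yields $Y_k\geq\sqrt{2\beta^* k(-\log c_{\max})\log\log(\beta^* k(-\log c_{\max}))}$, while Proposition 3.3 applied to $\log\fS_{\omega_i}^s$ gives $S_k\leq(1+\epsilon/2)\sqrt{2vk\log\log vk}$ for all $k\geq k_0(\omega)$, almost surely. The calibration of $\eta^*$ then produces a constant $\alpha>0$ with $S_k-(1-\epsilon)Y_k\leq-\alpha\sqrt{k\log\log k}$ for all large $k$, so every term in the dyadic sum decays super-polynomially, the series is finite, and its tail vanishes as $J_0\to\infty$.

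The main obstacle is the calibration of $\eta^*$: the factor $(1-\epsilon)$ weakens the deterministic lower bound on $Y_k$, forcing $\beta^*$ to exceed the $\beta_0^*$ of Theorem 5.3 by a margin controlled by $\epsilon$. Once $\eta^*$ is tuned so that $(1-\epsilon)\sqrt{\beta^*(-\log c_{\max})}$ strictly dominates $(1+\epsilon/2)\sqrt{v}$, the LIL upper envelope of $S_k$ is strictly beaten by the deterministic lower envelope of $(1-\epsilon)Y_k$, producing the positive gap $\alpha$. This gap is precisely why Theorems 5.3 and 5.4 \emph{bracket} rather than pinpoint the true packing gauge.
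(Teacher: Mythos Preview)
Your argument is correct and follows the same skeleton as the paper's: reduce to $\mathscr{P}_0^{h_1^*}=0$ via Lemma~5.2, bound any packing by the single-level quantity $(\prod_{i=1}^n\cN_{\omega_i})\,h_1^*(D_n(\omega))$, and kill that quantity with the upper half of the LIL after calibrating~$\eta^*$. The paper does exactly this, only more tersely.

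The one genuine difference worth noting is how the packing sum is controlled. The paper asserts directly that
\[
\sup\Big\{\textstyle\sum_i h_1^*(2r_i,\beta^*,\epsilon)\Big\}\;\leq\;\kappa\,\sup_{n\geq k(\delta)}\Big(\prod_{i=1}^{n}\cN_{\omega_i}\Big)h_1^*(D_n(\omega),\beta^*,\epsilon),
\]
citing only ``homogeneity''. The mechanism behind this is a Kraft-inequality argument: the cylinders containing the ball centres form an antichain, and in the equicontractive homogeneous setting the weights $\bigl(\prod_{j\leq n_i}\cN_{\omega_j}\bigr)^{-1}$ along any antichain sum to at most~$1$, so the packing sum is dominated by the supremum of $a_n=(\prod_{i\leq n}\cN_{\omega_i})h_1^*(D_n)$. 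You instead group the radii dyadically and obtain a \emph{series} $\sum_{j\geq J_0}a_{k_j(\omega)}$ rather than a supremum; this is a weaker bound, but since you then show every term decays like $\exp(-\alpha\sqrt{k\log\log k})$ (and the map $j\mapsto k_j$ is boundedly finite-to-one), the series is summable and its tail still vanishes. Both routes land on the same LIL estimate; the paper's is slicker, yours is more explicit and avoids the unstated Kraft step. Your discussion of why $\eta^*$ must be inflated relative to $\beta_0^*$ in Theorem~5.3 is also more transparent than the paper's ``for an appropriately chosen~$\eta$''.
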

\begin{proof}
By the homogeneity of the construction
\begin{multline*}
\sup\Bigg\{ \sum_{i=1}^\infty h_1^*(2r_i,\beta_0^*, \epsilon)  \;\Big|\; \{B(x_i,r_i)\} \text{ are disjoint balls with $2r_i<\delta$ and $x_i\in F_\omega$}  \Bigg\}\\
\leq\kappa\sup_{n\geq k(\delta)}\Bigg\{\left(\prod_{i=1}^{n}\cN_{\omega_i}\right) h_1^*(c_{\omega_1}\dots c_{\omega_{n}},\beta^*, \epsilon) \Bigg\}
\end{multline*}
for some $\kappa>0$ depending on the diameter of $F_\omega$ and the doubling properties of $h_1$ only. So, for an appropriately chosen $\eta$, we obtain the desired conclusion from the law of the iterated logarithm.
\end{proof}

\subsection{Existence of a gauge function}
Lemma \ref{lma:easierpacking} is unfortunately not sufficient to allow us to prove the non-existence
of a gauge function with positive and finite packing measure using the same approach
as in Section~\ref{sect:nonExistenceMain}. However, the underlying idea still holds as the packing
measure should, intuitively behave like \[\limsup_{k\to\infty}\sum_{e\in\mathbf{T}_\tau^k}h(c_e).\]
We therefore conjecture
\begin{conj}
  Let $\mathbb{L}$ be a family of IFS that satisfy the UOSC and Condition~\ref{cond:boundedBelow}.
  Let $\Prob$ be a random code-tree measure and assume that $F_\tau$ is not almost deterministic.
  Further, let $h(t)$ be any gauge function. Then,
  \begin{equation*}
    \Prob\left\{ \tau\in\mathcal{T} \mid \mathscr{P}^h(F_\tau)\in\left\{ 0,\infty \right\}
  \right\}=1.
  \end{equation*}
  In particular, there does not exist a gauge function that gives positive and finite measure almost
  surely.
\end{conj}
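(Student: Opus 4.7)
The plan is to mirror the structure of Theorem \ref{thm:mainTheo}: replace the $\liminf$ over minimal sections (which controlled $\Haus^h$) with a $\limsup$ (which should control $\mathscr{P}_0^h$), and replace the ``shrinking'' neck blocks of Lemma \ref{lma:neckgaps} with ``expanding'' blocks that exist by the same non-almost-deterministic hypothesis applied from the opposite side. By Lemma \ref{lma:easierpacking} it suffices to prove $\mathscr{P}_0^h(F_\tau)\in\{0,\infty\}$ almost surely, and since this is a tail event for the neck shift, Kolmogorov's zero-one law permits a contradiction argument from the assumption $\Prob\{0<\mathscr{P}^h(F_\tau)<\infty\}=1$.

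The first step is to establish the packing analogue of the comparison proven inside Theorem \ref{thm:mainTheo}: almost surely,
$$\mathscr{P}_0^h(F_\tau)\asymp\lim_{k\to\infty}\sup_{M_k\in\mathbf{M}_k}\sum_{v\in M_k}h(c_{\mathbf{T}_\tau(v)}),$$
where $\mathbf{M}_k$ is the collection of minimal sections of depth at least $k$. The lower bound would come from selecting one point $x_v\in f_{\mathbf{T}_\tau(v)}(F_{\sigma^v\tau})\subset F_\tau$ for each $v\in M_k$ and taking disjoint balls of radius proportional to $c_{\mathbf{T}_\tau(v)}$; disjointness follows from the UOSC for a suitable proportionality constant, and Lemma \ref{lma:doubling} ties the resulting sum to $\sum_v h(c_{\mathbf{T}_\tau(v)})$. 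The upper bound would come from replacing each ball in a packing with the cylinders it meets at a comparable scale: by Lemma \ref{upperasslem1} only boundedly many cylinders meet any single ball, and doubling of $h$ again controls the exchange.

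The second step is to produce ``expansion'' blocks. Since the fractal is not almost deterministic and $s$ is the common almost sure Hausdorff and packing dimension, the variable $\sum_{e\in\mathbf{T}_\tau^{N_1(\tau)}}(c_e)^s$ cannot concentrate on the side $\leq 1$ alone (the symmetric half of Lemma \ref{lma:neckgaps}), so there exist $p_0,\epsilon\in(0,1)$, $\gamma'>1$, and a set $B^+\subset\mathcal{T}$ with $\Prob(B^+)=p_0$ on which
$$\inf_{\tau\in B^+}\sum_{e\in\mathbf{T}_\tau^{N_1(\tau)}}h(t\,c_e)\geq\gamma'\,h(t)\qquad(0<t\leq t_0).$$
One then introduces the packing analogue $\mathbf{S}_\tau^k$ of the quantity used in Theorem \ref{thm:mainTheo} (the running supremum of section sums up to the $k$-th neck), partitions realisations into the level sets $E_i^+=\{\tau:\lim_k\mathbf{S}_\tau^k\in(\gamma'^{\,i},\gamma'^{\,i+1}\,]\}$, chooses $j_0$ with $\Prob(E_{j_0}^+)>0$, and at each ``jump'' of the running $\mathbf{S}_\tau^k$ splices in a $B^+$-neck to produce pairwise disjoint sets $\widehat{E}_{j_0}^+(i)\subset\bigcup_{k>j_0}E_k^+$. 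By $\Pi$-invariance of $\Prob$ and independence of neck blocks, each $\widehat{E}_{j_0}^+(i)$ has measure $p_0\,\Prob(E_{j_0}^+)>0$, so $\sum_i\Prob(\widehat{E}_{j_0}^+(i))=\infty$, contradicting $\Prob\leq 1$.

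The main obstacle is the first step. In the Hausdorff case, the infimum over covers may be rearranged onto single tree levels by homogeneity of the neck structure, but ball packings centred in $F_\tau$ are not naturally adapted to cylinders: the passage between ball radii and cylinder diameters introduces a scale issue (controlled by doubling of $h$) and a multiplicity issue (controlled only up to a UOSC-dependent constant via Lemma \ref{upperasslem1}). Establishing the comparison with a uniform, deterministic constant --- and in particular ruling out pathological oscillation of the random proportionality $\kappa_\tau$ along the sequence of sections (where suprema, unlike infima, are not automatically monotone in the depth of the section) --- is where the real technical work lies, and presumably explains why the statement currently remains a conjecture.
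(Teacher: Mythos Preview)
The statement you are attempting is listed in the paper as a \emph{conjecture}; no proof is given, and the authors explicitly remark just before stating it that ``Lemma~\ref{lma:easierpacking} is unfortunately not sufficient to allow us to prove the non-existence of a gauge function with positive and finite packing measure using the same approach as in Section~\ref{sect:nonExistenceMain}.'' Your plan is precisely that approach --- Lemma~\ref{lma:easierpacking} together with a dualised version of the section argument of Theorem~\ref{thm:mainTheo} --- so you have correctly reconstructed the natural strategy and, to your credit, correctly located where it breaks down.

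Your step~2 and the level-set contradiction are sound in outline. The non-almost-deterministic hypothesis does force $\Prob\{\sum_{e}(c_e)^{s}>1\}>0$ at the first neck (else $\E\log\sum(c_e)^s=0$ with $\log\sum(c_e)^s\leq 0$ a.s.\ would give almost determinism), so the expanding blocks $B^+$ exist, and the insertion/disjointness bookkeeping dualises. One point that needs more care than you give it is the monotonicity structure. In the Hausdorff proof the quantity $\mathbf{S}_\tau^k$ \emph{increases} to $\kappa_\tau\Haus^h(F_\tau)$, and the two-case split into $E$ (proper convergence) and $K$ (eventually constant) exploits this. For packing, the natural section quantity $\sup_{M\in\mathbf{M}_k}\sum_v h(c_{\mathbf{T}_\tau(v)})$ is \emph{non-increasing} in $k$, so ``jump'' times and the case-$K$ endgame must be reformulated. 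Your alternative ``running supremum up to the $k$-th neck'' is non-decreasing, but it is not clear that its limit is the quantity comparable to $\mathscr{P}_0^h$ rather than to $\sup_k\mathscr{P}_{\delta_k}^h$.

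The genuine gap is your step~1, as you yourself diagnose. The lower bound (sections give packings) is not automatic under the UOSC alone because cylinder closures may touch; the device used in Theorem~\ref{thm:packinggauge} of passing to an inner cylinder $e_{ch}$ helps but introduces a level-dependent loss. The upper bound (packings give section sums) is subtler still: an arbitrary ball packing produces only an \emph{antichain} of cylinders, not a full minimal section, and in the non-equicontractive code-tree setting there is no convexity argument available between necks to bound the antichain sum by a single section sum. Establishing the two-sided comparison with a $\tau$-uniform constant is exactly the missing ingredient, and your closing sentence is right that this is why the paper leaves the statement open.
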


\section{Implications for a random implicit theorem}\label{sect:implications}
There are two notable implications that our result has for random attractors in general. One
concerns a random analogue of the \emph{implicit theorem} due to Falconer~\cite{Falconer89}, whereas
the other concerns the question on whether $V$-variable models interpolate between random
homogeneous and random recursive sets.

\subsection{The implicit theorems}
The implicit theorems are two statements about metric spaces that give a checkable condition for the
set to have equal Hausdorff and upper-box counting dimension. Further, they give sufficient
conditions for positive and finite Hausdorff measure. Both appeared first in
Falconer~\cite{Falconer89} but can also be found as \cite[Theorems 3.1 and 3.2]{TecFracGeo}.

\begin{prop}
  Let $F$ be a non-empty subset of $\R^d$ and let $a>0$ and $r_0>0$. Write $s=\dim_H F$ and suppose
  that for every set $U$ that intersects $F$ such that $\lvert U\rvert<r_0$ there is a mapping $g :
  U\cap F\to F$ with
  \begin{equation*}
    a\lvert x-y\rvert\leq \lvert U\rvert \cdot \lvert g(x)-g(y)\rvert
  \end{equation*}
  for every $x,y\in F$. Then, $\Haus^s(F)\geq a^s>0$ and the upper box-counting dimension of $F$
  coincides with $s$.
\end{prop}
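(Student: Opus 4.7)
Plan. I would first read the quantifier ``for every $x,y\in F$'' as ``for every $x,y\in U\cap F$'' (a typo: $g$ is only defined on $U\cap F$); then $g$ is a strict expansion of ratio at least $a/|U|$ on $U\cap F$. The whole argument pivots on a \emph{pullback scaling inequality}
\[
\Haus^s(U\cap F)\ \leq\ (|U|/a)^s\,\Haus^s(F)
\]
valid for every $U$ with $U\cap F\neq\varnothing$ and $|U|<r_0$. To derive it, take any $\delta$-cover $\{V_j\}$ of $F$; the sets $W_j=g^{-1}(V_j)\cap U\cap F$ cover $U\cap F$ (because $g(U\cap F)\subseteq F\subseteq\bigcup_j V_j$), and the expansion property forces $|W_j|\leq (|U|/a)|V_j|$. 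Summing $s$-powers and sending $\delta\to 0$ gives the inequality.

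Granting this, the lower bound $\Haus^s(F)\geq a^s$ is almost automatic in the non-degenerate case. For any $\delta$-cover $\{U_i\}$ of $F$ with $\delta<r_0$, the pullback inequality summed over $i$ yields
\[
\Haus^s(F)\ \leq\ \sum_i\Haus^s(U_i\cap F)\ \leq\ a^{-s}\Haus^s(F)\sum_i|U_i|^s.
\]
If $0<\Haus^s(F)<\infty$ I divide to obtain $\sum_i|U_i|^s\geq a^s$, and the infimum over $\delta$-covers gives $\Haus^s_\delta(F)\geq a^s$ for every $\delta<r_0$, hence $\Haus^s(F)\geq a^s$. The case $\Haus^s(F)=\infty$ is trivial.

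The hard part will be excluding the degenerate case $\Haus^s(F)=0$, which I would rule out by iteration. Assuming $\Haus^s(F)<a^s$ (in particular if it vanishes) there exists a cover $\{V_i\}$ with $\max|V_i|\leq\delta_0<\min(r_0,a)$ and $S:=\sum_i|V_i|^s<a^s$. The $k$-fold iterated pullback produces a cover of $F$ of maximum diameter at most $\delta_0^{k+1}/a^k$ and $s$-sum at most $a^s(S/a^s)^{k+1}$, both decaying geometrically. The elementary estimate $\sum_j|W_j|^{s-\epsilon}\leq (\max_j|W_j|)^{-\epsilon}\sum_j|W_j|^s$ then yields $\Haus^{s-\epsilon}_{\delta_k}(F)\to 0$ along a sequence $\delta_k\to 0$, provided $\epsilon>0$ is chosen so that $(S/a^s)(a/\delta_0)^\epsilon<1$. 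This forces $\Haus^{s-\epsilon}(F)=0$, contradicting $\dim_H F=s$ (the case $s=0$ is trivial, since $\Haus^0(F)\geq 1=a^0$ for any non-empty $F$).

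For the box-counting equality, applying the same pullback to optimal covers at two scales $R<r_0$ and $r$ yields the submultiplicative estimate $N_{Rr/a}(F)\leq N_R(F)\,N_r(F)$. Setting $\psi(u)=\log N_{ae^{-u}}(F)$ turns this into subadditivity $\psi(u+v)\leq\psi(u)+\psi(v)$, so Fekete's lemma identifies $\overline{\dim}_B F$ with $\lim_{u\to\infty}\psi(u)/u=\inf_u\psi(u)/u$. The inequality $\overline{\dim}_B F\geq\dim_H F=s$ is standard, and combined with $\Haus^s_\delta(F)\geq a^s$—which forces $N_\delta(F)\geq(a/\delta)^s$ and hence $\psi(u)/u\geq s$—pins the limit to $s$ via the matching upper bound obtained by iterating the submultiplicative relation from a near-optimal cover at a small base scale.
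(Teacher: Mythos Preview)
The paper does not prove this proposition; it is quoted from Falconer~\cite{Falconer89} and \cite{TecFracGeo} as background, so there is no in-paper proof to compare against. Your outline follows the standard argument in spirit, but contains a concrete error and a logical gap.

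The error is in the degenerate-case step. The ``elementary estimate''
\[
\sum_j |W_j|^{s-\epsilon}\ \leq\ \bigl(\max_j |W_j|\bigr)^{-\epsilon}\sum_j |W_j|^{s}
\]
is false: since $|W_j|\leq M:=\max_j|W_j|$ and $-\epsilon<0$, one has $|W_j|^{-\epsilon}\geq M^{-\epsilon}$, so the inequality is reversed. What is true is the bound with $\min_j|W_j|$ in place of $\max_j|W_j|$, but that is useless for countable covers whose diameters may tend to~$0$. The right repair is to track the $(s-\epsilon)$-sums through the iteration directly: the product structure of the iterated sets gives
\[
\sum|W^{(k)}|^{s-\epsilon}\ \leq\ a^{s-\epsilon}\Bigl(\textstyle\sum_i (|V_i|/a)^{s-\epsilon}\Bigr)^{k+1},
\]
and one then needs $\sum_i (|V_i|/a)^{s-\epsilon}<1$ for some $\epsilon>0$. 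For a \emph{finite} starting cover this follows by continuity from $\sum_i(|V_i|/a)^s=S/a^s<1$, and your condition should read $(S/a^s)(a/\delta_{\min})^\epsilon<1$ with $\delta_{\min}=\min_i|V_i|$, not $\delta_0$. Arranging a finite cover needs a word (e.g.\ $F$ bounded, fatten to open sets, pass to a finite subcover of~$\overline F$).

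The box-counting paragraph also has a gap. Fekete gives $\dim_B F=\lim_u\psi(u)/u=\inf_u\psi(u)/u$, and your bound $\Haus^s_\delta(F)\geq a^s$ correctly yields $\psi(u)/u\geq s$, hence $\dim_B F\geq s$. But ``iterating the submultiplicative relation from a near-optimal cover'' only reproduces $\inf_u\psi(u)/u\leq\psi(u_0)/u_0$; it does not place that infimum at~$s$. The standard route inverts your logic: first set $t:=\dim_B F$ via Fekete (so $N_r(F)\geq (a/r)^t$), then run the iteration argument at exponent~$t$ rather than~$s$ to conclude $\Haus^t(F)\geq a^t>0$, forcing $\dim_H F\geq t$ and hence $s=t$. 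In that order the measure bound and the dimension equality come out together, and the degenerate case $\Haus^s(F)=0$ never needs separate treatment.
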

Heuristically, this means that if every small enough piece of a set $F$ can be embedded into the
entire set $F$ without `too much distortion', the Hausdorff measure is positive and all the
commonly considered dimensions such as Hausdorff, packing, and box-counting dimension, coincide.
Similarly, the second implicit theorem is.
\begin{prop}
  Let $F$ be a non-empty compact subset of $\R^d$ and let $a>0$ and $r_0>0$. Write $s=\dim_H F$ and
  suppose that for every closed ball $B$ with centre in $F$ and radius $r< r_0$ there exists a map
  $g:F\to B\cap F$ satisfying
  \begin{equation*}
    a r \lvert x-y\rvert \leq \lvert g(x)-g(y0\rvert
  \end{equation*}
  for all $x,y\in F$. Then $\Haus^s(F)\leq 4^s a^{-s}<\infty$ and the upper box-counting dimension
  is $s$.
\end{prop}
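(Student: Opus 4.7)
The plan is to exploit the hypothesis to fit many pairwise disjoint bi-Lipschitz copies of $F$ into $F$ itself, and to balance this against the $s$-dimensional Hausdorff measure to control a covering number, which simultaneously yields finiteness of $\Haus^s(F)$ and the upper box-counting bound.

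First I would fix $r<r_0$ and take a maximal $r$-separated subset $\{p_1,\dots,p_N\}$ of $F$. By maximality the balls $B(p_i,r)$ cover $F$, and by separation the balls $B(p_i,r/2)$ are pairwise disjoint. Applying the hypothesis with each such $B(p_i,r/2)$ produces maps $g_i:F\to B(p_i,r/2)\cap F$ with $|g_i(x)-g_i(y)|\ge (ar/2)|x-y|$, so the images $g_i(F)$ are pairwise disjoint subsets of $F$. Each $g_i$ is a bi-Lipschitz injection (the lower bound alone suffices: $g_i^{-1}$ is Lipschitz with constant $(ar/2)^{-1}$ on $g_i(F)$), so $\Haus^s(g_i(F))\ge (ar/2)^s\Haus^s(F)$.

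Assuming for the moment that $0<\Haus^s(F)<\infty$, the disjointness inside $F$ gives
\[
\Haus^s(F)\ \ge\ \sum_{i=1}^N \Haus^s(g_i(F))\ \ge\ N(ar/2)^s\Haus^s(F),
\]
so $N\le (2/(ar))^s$. The cover of $F$ by the $N$ balls $B(p_i,r)$ of diameter $2r$ then yields
\[
\Haus^s_{2r}(F)\ \le\ N(2r)^s\ \le\ (2/(ar))^s (2r)^s\ =\ 4^s a^{-s},
\]
and letting $r\to 0$ gives the desired bound $\Haus^s(F)\le 4^s a^{-s}$. The same inequality $N\le (2/(ar))^s$ also bounds the covering number $N_r(F)$, so $\overline{\dim}_B F\le s$; combined with the trivial $\dim_H F\le \overline{\dim}_B F$ this gives the box-counting statement.

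The main obstacle is justifying the cancellation when $\Haus^s(F)$ could a priori be $0$ or $\infty$. The case $\Haus^s(F)=0$ is vacuous. To rule out $\Haus^s(F)=\infty$, I would rerun the comparison at the level of premeasures $\Haus^s_\delta$: choose the points $p_i$ strictly $r$-separated so that the balls $B(p_i,r/2)$ have positive pairwise distance $\eta>0$, and then for $\delta<\eta$ any $\delta$-cover of $F$ splits into $\delta$-covers of each $g_i(F)$, yielding
\[
\Haus^s_{(ar/2)\delta'}(F)\ \ge\ N(ar/2)^s\Haus^s_{\delta'}(F),
\]
where the premeasures are all finite by compactness of $F$. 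Passing to $\delta'\to 0$ on both sides (they share the same limit $\Haus^s(F)$) cancels $\Haus^s(F)$ as soon as it is positive, forcing it in fact to be finite and bounded by $4^s a^{-s}$ via the covering argument above. The only delicate point is verifying the additivity of $\Haus^s_\delta$ across the positively separated pieces $g_i(F)$, which is standard once $\delta<\eta$.
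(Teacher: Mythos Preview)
The paper does not give its own proof of this proposition: it is merely quoted as one of Falconer's implicit theorems, with references to \cite{Falconer89} and \cite[Theorem~3.2]{TecFracGeo}. There is therefore nothing in the paper to compare your argument against. Your write-up is essentially a reconstruction of the classical proof, and the overall strategy --- take a maximal $r$-separated set, use the hypothesis to produce $N$ pairwise disjoint bi-Lipschitz images of $F$ inside $F$, and balance $\Haus^s$ of the whole against the sum over the pieces --- is exactly the standard one.

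Two places still need work. First, calling the case $\Haus^s(F)=0$ ``vacuous'' dispatches the measure inequality but not the box-counting conclusion: your bound $N\le (2/(ar))^s$ was obtained only after cancelling a positive factor $\Haus^s(F)$, so $\overline{\dim}_B F\le s$ is not yet established when the measure vanishes. Second, and more seriously, your premeasure argument for ruling out $\Haus^s(F)=\infty$ is circular. You correctly derive
\[
\Haus^s_{c\delta'}(F)\ \ge\ N c^s\,\Haus^s_{\delta'}(F)\qquad(c=ar/2)
\]
with both sides finite, and then say that passing to $\delta'\to 0$ ``cancels $\Haus^s(F)$ as soon as it is positive, forcing it in fact to be finite''. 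But if $\Haus^s(F)=\infty$ both sides diverge and the ratio $\Haus^s_{c\delta'}(F)/\Haus^s_{\delta'}(F)$ need not tend to $1$; one cannot conclude $N c^s\le 1$ from $\infty\ge N c^s\cdot\infty$, and hence cannot feed that bound back into the covering estimate $\Haus^s_{2r}(F)\le N(2r)^s$ to deduce finiteness. Some additional device is needed to break this loop --- for instance, iterating the premeasure inequality to compare scales directly, or running the argument at a perturbed exponent --- before the conclusion follows.
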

Here, the intuitive picture is that every ball centred in $F$ contains a `not too small' copy of
the entire set $F$.
We remark that the second theorem can be applied to all self-similar and self-conformal sets and
thus we conclude that their box-counting and Hausdorff dimensions coincide regardless of any overlap
conditions. Further, we can conclude that their Hausdorff measure is always finite.
If we additionally have overlap conditions such as the open set condition, we can apply the first
implicit theorem and get not just finite but also positive measure.

It has been a long-standing question whether there exists some random analogue of such statement.
It is certainly feasible that such a statement can exist, as it is known that the Hausdorff and
box-counting dimensions agree for many common random models such as random recursive, $V$-variable,
and also graph directed models, see \cite{Troscheit17} and references therein.

However, a general statement for random sets that includes a conclusion of positive and finite measure has been more
elusive. It was known for a while that the Hausdorff measure of random recursive sets is $0$ almost
surely for reasonable random self-similar sets, as we have discussed in Section~\ref{sect:survey}.
Any potential implicit theorems with results on the positivity of the Hausdorff measure must have
taken into account the underlying process and would have been associated with a gauge function for
that process.

Our results show, however, that even though random homogeneous sets are very natural and should
surely have come under the scope of such a theorem, the non-existence of a gauge functions means
that there could not be such a general implicit statement.

The best one could hope for for such an implicit theorem is just a statement about the coincidence
of Hausdorff and box-counting dimension, \ie that it does not matter whether one takes the infimum
over all coverings, but restricts oneself to coverings with sets of equal diameter.

\subsection{$V$-variable interpolation}

The $V$-variable model was first introduced to interpolate between the random homogeneous and the
random recursive process. It was suggested in Barnsley et al.~\cite{Barnsley12} that the Hausdorff
dimension of (reasonably picked) $V$-variable sets should interpolate between the two models. That
is, let $F_V$ be the random set created by a $V$-variable process sharing the same RIFS
$(\mathbb{L},\mu)$. Further, denote by $F_\infty$ the random attractor of the associated random
recursive set. Barnsley et al.\ claim in \cite{Barnsley12} that $\ess\dim_H F_V \to \ess\dim_H F_\infty$ as $V\to\infty$ but only
support this with some computational evidence. As far as we are aware, there is no known
proof that the dimension converges. What is more, it is not even known whether this sequence of
dimensions is increasing. The computational evidence seems to suggest the following conjecture.
\begin{conj}
  Let $(\mathbb{L},\mu)$ be a RIFS that satisfies the UOSC and Condition~\ref{cond:boundedBelow}.
  Let $F_V$ be the associated $V$-variable, and $F_\infty$ be the random recursive attractor. Let
  $D : \N \cup\{\infty\} \to \R^+$ be given by $D(V)=\ess\dim_H F_V$ and suppose that
  $D(1)<D(\infty)$. Then $D(V)$ is strictly increasing  and $D(V)\to D(\infty)$ as $V\to\infty$.
\end{conj}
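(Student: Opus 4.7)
The plan is to re-express $D(V)$ as the unique $s$ at which a suitable $V$-variable pressure $p_V(s)$ vanishes, and then separately address monotonicity in $V$ and the limit as $V\to\infty$. Observe that $p_1(s)=\E(\log\fS_{\omega_1}^s)$ and $p_\infty(s)=\log\E(\fS_{\omega_1}^s)$, so Jensen's inequality gives $p_1\leq p_\infty$, consistent with $D(1)\leq D(\infty)$ and with equality precisely in the almost deterministic case. For finite $V\geq 2$ the natural candidate for $p_V(s)$ is the top Lyapunov exponent of the random matrix product arising from the Markov chain on $\Lambda^V$ built from the $V$-variable resampling rule; under Condition~\ref{cond:boundedBelow} each $p_V$ is continuous, strictly decreasing in $s$ with uniformly bounded slope (controlled by $c_{\min},c_{\max}$), and determines $D(V)$ uniquely via $p_V(D(V))=0$. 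Existence of these limits follows from Kingman's subadditive ergodic theorem applied to the neck-shift framework.

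For the convergence $D(V)\to D(\infty)$ I would exploit the fact that a random recursive tree truncated at level $k$ has at most $\cN^k$ distinct labelled subtrees. Hence for $V\geq\cN^k$ a $V$-variable process can reproduce the first $k$ levels of any random recursive realisation exactly, so the $V$-variable partition functions $\E\!\left(\sum_{e\in\mathbf{T}_\tau^k}(c_e)^s\right)$ agree with their random recursive counterparts up to level $k$. Letting $k$ and $V$ tend to infinity simultaneously should give $p_V(s)\to p_\infty(s)$ pointwise; combined with the uniform slope bound this forces the roots to converge, yielding $D(V)\to D(\infty)$.

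For the (weak) monotonicity $D(V)\leq D(V+1)$ the natural approach is a coupling: force two of the $V+1$ types in any $(V+1)$-variable realisation to always agree, which embeds the $V$-variable process as a degenerate $(V+1)$-variable process. This embedding yields $p_{V+1}(s)\geq p_V(s)$ at the level of partition functions, and hence $D(V+1)\geq D(V)$ for every $V\in\N$.

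The main obstacle will be \emph{strict} monotonicity. One promising route is to view $p_V(s)$ as the expectation of a concave functional of the empirical distribution of active types and invoke the strict form of Jensen's inequality: under $D(1)<D(\infty)$ the integrand is strictly concave on a set of positive $\mu$-measure, and adding one extra independent type strictly enlarges the effective randomness driving the matrix product. Ruling out the degenerate possibility that $D(V)=D(V+1)$ for some $V$ will likely require a Furstenberg-type irreducibility/contraction analysis of the random matrix semigroup associated with the $V$-variable chain, and is, I expect, by far the most delicate step of the proof -- especially given the paper's own demonstration that $V$-variable sets behave more like random homogeneous than random recursive with respect to exact gauge functions, which suggests the interpolation is genuinely subtle rather than a simple convex combination of the two extremes.
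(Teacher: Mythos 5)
This statement is a \emph{conjecture} in the paper, not a theorem: the paper explicitly states that it is not known whether $D(V)$ converges to $D(\infty)$, nor even whether $D(V)$ is non-decreasing, and cites only computational evidence from Barnsley et al. Consequently there is no paper proof to compare against, and your proposal should be read as a research programme rather than as matching or departing from an existing argument.

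As a programme it is sensible in outline (pressure functions, Lyapunov exponents for the $V$-variable flow matrices, a coupling for monotonicity, and approximation for $V\to\infty$), but several steps as written do not go through. The weak-monotonicity coupling is the most concrete problem: forcing two of the $V+1$ buffers to always agree does not embed the $V$-variable process into the $(V+1)$-variable one, because the $(V+1)$-variable resampling rule assigns each child a buffer \emph{uniformly over $V+1$} choices, so the duplicated buffer is chosen with probability $2/(V+1)\neq 1/V$ and the law of the code-tree changes. You would need a coupling that also corrects the resampling measure, and it is not clear such a coupling is monotone in the pressure. The approximation step ($V\geq\cN^k$ reproduces the first $k$ levels) is morally right but requires an interchange of the $k\to\infty$ limit in the pressure with the $V\to\infty$ limit; the uniform slope bound on $p_V$ that you invoke is asserted, not established, and is exactly what controls that interchange. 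Finally, strict monotonicity is left entirely open by your own account, and the paper itself emphasises that even weak monotonicity is unknown. Its main theorem (no gauge function for the Hausdorff measure of $V$-variable sets, just as for random homogeneous ones) is offered as evidence that the interpolation is delicate, which cuts against the hope that a soft convexity or Jensen argument will yield strictness. So the proposal identifies the right objects but does not constitute a proof, and the two most substantive ingredients --- a correct monotone coupling and any handle on strictness --- are missing.
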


One could further ask whether there is a closed form expression for the Hausdorff dimension of
equicontractive RIFS $V$-variable sets, as there is for random homogeneous and random recursive:
$\E(\log\cN_\lambda)/\log c$ and $\log\E(\cN_\lambda)/\log c$, respectively.

However, our work has shown that $V$-variable sets have much more in common with random homogeneous
processes than with the random recursive. In both of the former there simply cannot be a gauge
function that adequately describes the fine dimension, whereas there is one for the latter.
This implies that $V$-variable processes cannot interpolate the fine dimension (as there is nothing
to interpolate with), but they could still interpolate in the coarse sense.

\subsection*{Acknowledgements}
The work was started while the author attended the ICERM semester
programme \emph{Dimension Theory and Dynamics} in the spring of 2016 and the author thanks ICERM and
Brown University for their financial support.  Further, ST was at initially at the University of St
Andrews and supported by EPSRC Doctoral Training Grant EP/K503162/1 and later by the Faculty of
Mathematics at the University of Waterloo and NSERC grants 2016-03719 and RGPIN-2014-03154.
The author is indebted to Kenneth Falconer, Mike Todd, Jonathan Fraser, and Julius Jonu\v{s}as for many
fruitful discussions. 

\bibliographystyle{../../Biblio/stcustom} \bibliography{../../Biblio/Biblio}

\newcommand{\etalchar}[1]{$^{#1}$}
\begin{thebibliography}{JJWW17}

\bibitem[AL06]{AthreyaMeasure}
K.~B. Athreya and S.~N. Lahiri.
\newblock \emph{Measure theory and probability theory}.
\newblock Springer Texts in Statistics. Springer, New York, 2006.

\bibitem[AN72]{AthreyaBook}
K.~B. Athreya and P.~E. Ney.
\newblock \emph{Branching processes}.
\newblock Springer-Verlag, New York-Heidelberg, 1972.
\newblock Die Grundlehren der mathematischen Wissenschaften, Band 196.

\bibitem[Ber03]{Berlinkov03}
A.~Berlinkov.
\newblock Exact packing dimension in random recursive constructions.
\newblock \emph{Probab. Theory Related Fields}, \textbf{126}, (2003), 477--496.

\bibitem[BHS05]{Barnsley05}
M.~Barnsley, J.~Hutchinson, and {\"O}.~Stenflo.
\newblock A fractal valued random iteration algorithm and fractal hierarchy.
\newblock \emph{Fractals}, \textbf{13}, no.~2, (2005), 111--146.

\bibitem[BHS08]{Barnsley08}
M.~F. Barnsley, J.~E. Hutchinson, and {\"O}.~Stenflo.
\newblock {$V$}-variable fractals: fractals with partial self similarity.
\newblock \emph{Adv. Math.}, \textbf{218}, no.~6, (2008), 2051--2088.

\bibitem[BHS12]{Barnsley12}
M.~Barnsley, J.~E. Hutchinson, and {\"O}.~Stenflo.
\newblock {$V$}-variable fractals: dimension results.
\newblock \emph{Forum Math.}, \textbf{24}, no.~3, (2012), 445--470.

\bibitem[BM02]{Berlinkov02}
A.~Berlinkov and R.~D. Mauldin.
\newblock Packing measure and dimension of random fractals.
\newblock \emph{Journal of Theoretical Probability}, \textbf{15}, (2002),
  695--713.

\bibitem[Fal86]{Falconer86}
K.~J. Falconer.
\newblock Random fractals.
\newblock \emph{Math. Proc. Cambridge Philos. Soc.}, \textbf{100}, (1986),
  559--582.

\bibitem[Fal89]{Falconer89}
K.~J. Falconer.
\newblock Dimensions and measures of quasi self-similar sets.
\newblock \emph{Proc. Amer. Math. Soc.}, \textbf{106}, no.~2, (1989), 543--554.

\bibitem[Fal97]{TecFracGeo}
K.~Falconer.
\newblock \emph{Techniques in fractal geometry}.
\newblock John Wiley \& Sons Ltd., Chichester, 1997.

\bibitem[Fal14]{FractalGeo3}
K.~Falconer.
\newblock \emph{Fractal geometry}.
\newblock John Wiley \& Sons, Ltd., Chichester, 3rd edition, 2014.
\newblock Mathematical foundations and applications.

\bibitem[Fre10]{Freiberg10}
U.~R. Freiberg.
\newblock Some remarks on the {H}ausdorff and spectral dimension of
  {$V$}-variable nested fractals.
\newblock In \emph{Recent developments in fractals and related fields}, Appl.
  Numer. Harmon. Anal., pp. 267--282. Birkh\"auser Boston, Inc., Boston, MA,
  2010.

\bibitem[Fur14]{FurstenbergBook14}
H.~Furstenberg.
\newblock \emph{Ergodic theory and fractal geometry}, volume 120 of \emph{CBMS
  Regional Conference Series in Mathematics}.
\newblock American Mathematical Society, Providence, RI, 2014.

\bibitem[GMW88]{Graf88}
S.~Graf, R.~D. Mauldin, and S.~C. Williams.
\newblock The exact {H}ausdorff dimension of random recursive constructions.
\newblock \emph{Memoirs of the American Mathematical Society}, \textbf{71}.

\bibitem[Gra87]{Graf87}
S.~Graf.
\newblock Statistically self-similar fractals.
\newblock \emph{Probab. Theory Related Fields}, \textbf{74}, no.~3, (1987),
  357--392.

\bibitem[Ham92]{Hambly92}
B.~M. Hambly.
\newblock Brownian motion on a homogeneous random fractal.
\newblock \emph{Probab. Theory Related Fields}, \textbf{94}, no.~1, (1992),
  1--38.

\bibitem[Hat00]{Hattori00}
K.~Hattori.
\newblock Exact {H}ausdorff dimension of self-avoiding processes on the
  multi-dimensional {S}ierpinski gasket.
\newblock \emph{J. Math. Sci. Univ. Tokyo}, \textbf{7}, no.~1, (2000), 57--98.

\bibitem[Hu95]{Hu95}
X.~Hu.
\newblock The measure functions of random {C}antor set and fractals determined
  by subordinators.
\newblock \emph{Chinese Science Bulletin}, \textbf{40}, no.~6, (1995),
  441--445.

\bibitem[Hu96]{Hu96}
X.~Hu.
\newblock The exact packing measure for a random re-ordering of the {C}antor
  set.
\newblock \emph{Sci. China Ser. A}, \textbf{39}, no.~1, (1996), 1--6.

\bibitem[JJK{\etalchar{+}}14]{Jarvenpaa14a}
E.~J{\"a}rvenp{\"a}{\"a}, M.~J{\"a}rvenp{\"a}{\"a}, A.~K{\"a}enm{\"a}ki,
  H.~Koivusalo, O.~Stenflo, and V.~Suomala.
\newblock Dimensions of random affine code tree fractals.
\newblock \emph{Ergodic Theory Dynam. Systems}, \textbf{34}, no.~3, (2014),
  854--875.

\bibitem[JJLS16]{Jarvenpaa16}
E.~J{\"a}rvenp{\"a}{\"a}, M.~J{\"a}rvenp{\"a}{\"a}, B.~Li, and {\"O}.~Stenflo.
\newblock Random affine code tree fractals and {F}alconer-{S}loan condition.
\newblock \emph{Ergodic Theory Dynam. Systems}, \textbf{36}, (2016),
  1516--1533.

\bibitem[JJWW17]{Jarvenpaa17}
E.~J\"arvenp\"a\"a, M.~J\"arvenp\"a\"a, M.~Wu, and W.~Wu.
\newblock Random affine code tree fractals: {H}ausdorff and affinity dimensions
  and pressure.
\newblock \emph{Math. Proc. Cambridge Philos. Soc.}, \textbf{162}, no.~2,
  (2017), 367--382.

\bibitem[Liu96]{Liu96}
Q.~Liu.
\newblock The exact {H}ausdorff dimension of a branching set.
\newblock \emph{Probab. Theory Related Fields}, \textbf{104}, no.~4, (1996),
  515--538.

\bibitem[Liu00]{Liu00}
Q.~Liu.
\newblock Exact packing measure on a {G}alton-{W}atson tree.
\newblock \emph{Stochastic Process. Appl.}, \textbf{85}, (2000), 19--28.

\bibitem[MGW87]{Mauldin87}
R.~D. Mauldin, S.~Graf, and S.~C. Williams.
\newblock Exact {H}ausdorff dimension in random recursive constructions.
\newblock \emph{Proc. Natl. Acad. Sci. USA}, \textbf{84}, (1987), 3959--3961.

\bibitem[Ols03]{Olsen03a}
L.~Olsen.
\newblock The exact {H}ausdorff dimension functions of some {C}antor sets.
\newblock \emph{Nonlinearity}, \textbf{16}, (2003), 963--970.

\bibitem[RU11]{Roy11}
M.~Roy and M.~Urba\'nski.
\newblock Random graph directed {M}arkov systems.
\newblock \emph{Discrete Contin. Dyn. Syst.}, \textbf{30}, no.~1, (2011),
  261--298.

\bibitem[Tro17a]{TroscheitPhDThesis}
S.~Troscheit.
\newblock \emph{Dimension theory of random self-similar and self-affine
  constructions}.
\newblock Ph.D. thesis, University of St Andrews, April 2017.

\bibitem[Tro17b]{Troscheit17}
S.~Troscheit.
\newblock On the dimensions of attractors of random self-similar graph directed
  iterated function systems.
\newblock \emph{J. Fractal Geom.}, \textbf{4}, no.~3, (2017), 257--303.

\bibitem[{Tro}19]{Troscheit17a}
S.~{Troscheit}.
\newblock {The quasi-Assouad dimension for stochastically self-similar sets
  (FirstView)}.
\newblock \emph{Royal Society of Edinburgh Proceedings A}, pp. 1--15.

\bibitem[Wat07]{Watanabe07}
T.~Watanabe.
\newblock Exact {H}ausdorff measure on the boundary of a {G}alton-{W}atson
  tree.
\newblock \emph{Ann. Probab.}, \textbf{35}, (2007), 1007--1038.

\end{thebibliography}

\end{document}